\def\lt{\left}
\def\rt{\right}
\newcommand{\bmat}{\left[ \begin{array}}
\newcommand{\emat}{\end{array} \right]}
\newcommand{\diag}{{\rm diag}}
\newcommand{\dis}{\displaystyle}
\newcommand{\ra}{\rightarrow}
\newcommand{\ignore}[1]{}
\newtheorem{theorem}{Theorem}[section]
\newtheorem{lemma}[theorem]{Lemma}
\newtheorem{corollary}[theorem]{Corollary}
\theoremstyle{definition}
\newtheorem{definition}[theorem]{Definition}
\newtheorem{remark}[theorem]{Remark}
\title{Minimizing Communication for Eigenproblems and the Singular Value Decomposition}
\author{Grey Ballard\thanks{CS Division, University of California, Berkeley, CA 94720},
James Demmel\thanks{Mathematics Department and CS Division,
University of California, Berkeley, CA 94720.},
and Ioana Dumitriu\thanks{Mathematics Department, University of Washington, Seattle, WA 98195.}}
\begin{document}

\maketitle

\begin{abstract}

Algorithms have two costs: arithmetic and communication. The latter represents the cost of moving data, 
either between levels of a memory hierarchy, or between processors over a
network. Communication often dominates arithmetic and represents a rapidly increasing proportion of the total cost,
so we seek algorithms that minimize communication. In \cite{BDHS10} lower bounds
were presented on the amount of communication required for essentially all
$O(n^3)$-like algorithms for linear algebra, including eigenvalue problems
and the SVD. Conventional algorithms, including those currently implemented in (Sca)LAPACK, perform asymptotically more communication
than these lower bounds require. In this paper we present parallel and sequential
eigenvalue algorithms
(for pencils, nonsymmetric matrices, and symmetric matrices) and SVD algorithms
that do attain these lower bounds, and analyze their convergence and communication
costs.

\end{abstract}

\section{Introduction} \label{Intro} 

The running time of an algorithm depends on two factors: the number of floating point operations executed (\emph{arithmetic}) and the amount of data moved either between levels of a 
memory hierarchy in the case of sequential computing, or over a 
network connecting processors in the case of parallel computing (\emph{communication}). 
The simplest metric of communication is to count the 
total number of words moved (also called the {\em bandwidth cost}).  Another simple metric is to count the number of messages containing these words (also known as the \emph{latency cost}).   
On current hardware the cost of moving a single word, or that of sending a single message, already 
greatly exceed the cost of one arithmetic operation, 
and technology trends indicate that this processor-memory gap is growing 
exponentially over time. So it is of interest to devise 
algorithms that minimize communication, sometimes even 
at the price of doing more arithmetic. 

In this paper we present sequential and parallel algorithms 
for finding eigendecompositions  and SVDs of dense matrices, 
that do $O(n^3)$ arithmetic operations, are numerically stable, 
and do asymptotically less communication than previous such algorithms. 

In fact, these algorithms attain known communication lower bounds 
that apply to many $O(n^3)$ algorithms in dense linear algebra.  
In the sequential case, when the $n$-by-$n$ input matrix does not 
fit in fast memory of size $M$, the number of words moved between 
fast (small) and slow (large) memory is $\Omega (n^3/\sqrt{M})$.
In the case of $P$ parallel processors, where each processor has 
room in memory for $1/P$-th of the input matrix, the number of 
words moved between one processor and the others is 
$\Omega (n^2 / \sqrt{P} )$. These lower bounds were originally 
proven for sequential \cite{HongKung81} and parallel \cite{ITT04} 
matrix multiplication, and extended to many other linear algebra 
algorithms in \cite{BDHS10}, including the first phase of 
conventional eigenvalue/SVD algorithms: reduction to Hessenberg, 
tridiagonal and bidiagonal forms.

Most of our algorithms, however, do not rely on reduction to these 
condensed forms; instead they rely on explicit QR factorization 
(which is also subject to these lower bounds). This raises the 
question as to whether there is a communication lower bound 
{\em independent of algorithm} for solving the eigenproblem. 
We provide a partial answer by reducing the QR decomposition
of a particular block matrix to computing the Schur form
of a similar matrix, so that sufficiently general lower bounds 
on QR decomposition could provide lower bounds for computing
the Schur form.

We note that there are communication lower bounds not just for the bandwidth cost, but also for the latency cost. 
Some of our algorithms also attain these bounds.

In more detail, we present three kinds of communication-minimizing
algorithms, {\em randomized spectral divide-and-conquer}, {\em eigenvectors from Schur Form}, 
and {\em successive band reduction}. 

Randomized spectral divide-and-conquer applies to eigenvalue problems for regular pencils
$A - \lambda B$, the nonsymmetric and symmetric eigenvalue problem of a single matrix,
and the singular value decomposition (SVD) of a single matrix. For $A - \lambda B$
it computes the generalized Schur form, and for a single nonsymmetric matrix
it computes the Schur form. For a symmetric problem or SVD, it computes the
full decomposition.
There is an extensive literature on such divide-and-conquer algorithms:
the PRISM project algorithm 
\cite{auslandertsao,bischofledermansuntsao}, 
the reduction of the symmetric eigenproblem to matrix multiplication by Yau and Lu \cite{luyau}, 
the matrix sign function algorithm originally formulated in \cite{howland,Roberts},
and the inverse-free approach originally formulated in
\cite{godunov86,bulgakov88,malyshev89,malyshev92,malyshev93},
which led to the version \cite{baidemmelgu94,DDH07} 
from which we start here. Our algorithms will minimize communication,
both bandwidth cost and latency cost, in both the two-level sequential memory model and
parallel model (asymptotically, up to polylog factors).  
Because there exist cache-oblivious sequential algorithms for matrix multiplication \cite{FLPR99} and QR decomposition \cite{FrensWise03}, we conjecture that it is possible to devise cache-oblivious versions of the randomized spectral divide-and-conquer algorithms, thereby minimizing communication costs between multiple levels of the memory hierarchy on a sequential machine.

In particular, we fix a limitation in \cite{baidemmelgu94} (also inherited
by \cite{DDH07}) which needs to compute a rank-revealing
factorization of a product of matrices $A^{-1}B$ without explicitly
forming either $A^{-1}$ or $A^{-1}B$. The previous algorithm used
column pivoting that depended only on $B$, not $A$; this cannot \emph{a priori}
guarantee a rank-revealing decomposition, as the improved version we present in this paper does. In fact,
we show here how to compute a randomized rank-revealing decomposition 
of an arbitrary product $A_1^{\pm 1} \cdots A_p^{\pm 1}$ without 
any general products or inverses, which is of independent interest.

We also improve the old methods (\cite{baidemmelgu94}, \cite{DDH07}) by providing a ``high level'' randomized 
strategy (explaining how to choose where to split the spectrum), in addition 
to the ``basic splitting'' method (explaining how to divide the spectrum 
once a choice has been made). We also provide a careful analysis of 
both ``high level'' and ``basic splitting'' strategies 
(Section \ref{Algs}), and illustrate the latter on a few numerical examples.

Additionally, unlike previous methods, we show how to deal effectively with 
clustered eigenvalues (by outputting a convex polygon, in the nonsymmetric case, 
or a small interval, in the symmetric one, where the eigenvalues lie), rather than 
assume that eigenvalues can be spread apart by scaling (as in \cite{luyau}). 

Given the (generalized) Schur form, our second algorithm computes the eigenvectors, minimizing communication by using a natural blocking scheme. Again, it works sequentially or in parallel, minimizing bandwidth cost and latency cost.  

Though they minimize communication asymptotically, 
randomized spectral divide-and-conquer algorithms
perform several times as much arithmetic
as do conventional algorithms.  For the case of regular pencils and 
the nonsymmetric eigenproblem, we know of no communication-minimizing 
algorithms that also do about the same number of floating point operations as conventional algorithms.
But for the symmetric eigenproblem (or SVD), it is possible to 
compute just the eigenvalues (or singular values) with very little extra arithmetic,
or the eigenvectors (or singular vectors) 
with just 2x the arithmetic cost for sufficiently large matrices
(or 2.6x for somewhat smaller matrices)
while minimizing bandwidth cost in the sequential case.
Minimizing bandwidth costs of these algorithms in the parallel case 
and minimizing latency in either case are open problems.  
These algorithms are a special case of the class of
{\em successive band reduction} algorithms introduced in \cite{SBR1,SBR2}.

The rest of this paper is organized as follows.
Section~\ref{rrdr} discusses randomized rank-revealing decompositions.
Section~\ref{Algs} uses these decompositions to implement randomized spectral divide-and-conquer algorithms.
Section~\ref{cba} presents lower and upper bounds on communication.
Section~\ref{cetm} discusses computing eigenvectors from matrices in Schur form.
Section~\ref{sec_SBR} discusses successive band reduction.
Finally, Section \ref{Conc} draws conclusions and presents some open problems.

\section{Randomized Rank-Revealing Decompositions} \label{rrdr}

Let $A$ be an $n \times n$ matrix with singular values $\sigma_1 \geq \sigma_2 \geq \ldots \geq \sigma_n$, and assume that there is a ``gap'' in the singular values at level $k$, that is, $\sigma_1/\sigma_k = O(1)$, while $\sigma_k/\sigma_{k+1} \gg 1$. 

Informally speaking, a decomposition of the form $A = URV$ is called \emph{rank revealing} if the following conditions are fulfilled:
\begin{itemize}
\item[1)] $U$ and $V$ are orthogonal/unitary and $R = \left [ \begin{array}{cc} R_{11} & R_{12} \\ O & R_{22} \end{array} \right ]$ is upper triangular, with $R_{11}$  $k \times k$ and  $R_{22}$  $(n-k) \times (n-k)$;
\item[2)] $\sigma_{min}(R_{11})$ is a ``good'' approximation to $\sigma_k$ (at most a factor of a low-degree polynomial in $n$ away from it),
\item[(3)] $\sigma_{max}(R_{22})$ is a ``good'' approximation to $\sigma_{k+1}$ (at most a factor of a low-degree polynomial in $n$ away from it);
\item[(4)] In addition, if $||R_{11}^{-1}R_{12}||_2$ is small (at most a low-degree polynomial in $n$), then the rank-revealing factorization is called  \emph{strong} (as per \cite{GE96}).

\end{itemize}

Rank revealing decompositions are used in rank determination \cite{stewart84}, least square computations \cite{CH92}, 
condition estimation \cite{bischof90a}, etc.,
as well as in divide-and-conquer algorithms for eigenproblems.
For a good survey paper, we recommend \cite{GE96}.

In the paper \cite{DDH07}, we have proposed a \emph{randomized} rank revealing factorization algorithm \textbf{RURV}. Given a matrix $A$, the routine computes a decomposition $A = URV$ with the property that $R$ is a rank-revealing matrix; the way it does it is by ``scrambling'' the columns of $A$ via right multiplication by a uniformly random orthogonal (or unitary) matrix $V^{H}$. The way to obtain such a random matrix (whose described distribution over the manifold of unitary/orthogonal matrices is known as \emph{Haar}) is to start from a matrix $B$ of independent, identically distributed normal variables of mean $0$ and variance $1$, denoted here and throughout the paper by $N(0,1)$. The orthogonal/unitary matrix $V$ obtained from performing the \textbf{QR} algorithm on the matrix $B$ is Haar distributed. 

Performing \textbf{QR} on the resulting matrix $A V^{H}=:\hat{A} = UR$ yields two matrices, $U$ (orthogonal or unitary) and $R$ (upper triangular), and it is immediate to check that $A = URV$. 

\begin{algorithm}
\protect\caption{Function $[U, R, V] =$\textbf{RURV}$(A)$, computes a randomized rank revealing decomposition $A = URV$, with $V$ a Haar matrix.} 
\begin{algorithmic}[1]
\label{rurv}
\STATE Generate a random matrix $B$ with i.i.d. $N(0,1)$ entries. 
\STATE $[V, \hat{R}] = $\textbf{QR}$(B)$.
\STATE $\hat{A} = A \cdot V^{H}$.
\STATE $[U,R] =$ \textbf{QR}$(\hat{A})$.
\STATE Output $R$.
\end{algorithmic}
\end{algorithm}

It was proven in \cite{DDH07} that, with high probability, \textbf{RURV} computes a good rank revealing decomposition of $A$ in the case of $A$ real. Specifically, the quality of the rank-revealing decomposition depends on computing the asymptotics of $f_{r,n}$, the smallest singular value of an $r \times r$ submatrix of a Haar-distributed orthogonal $n \times n$ matrix. All the results of \cite{DDH07} can be extended verbatim to Haar-distributed unitary matrices; however, the analysis employed in \cite{DDH07} is not optimal. In \cite{dumitriu10a}, the asymptotically exact scaling and distribution of $f_{r,n}$ were computed for all regimes of growth $(r,n)$ (naturally, $0 < r < n$). Essentially, the bounds state that $\sqrt{r(n-r)} f_{r,n}$ converges in law to a given distribution, both for the real and for the complex cases. 

The result of \cite{DDH07} states that \textbf{RURV} is, with high probability, a rank-revealing factorization. Here we strengthen these results to argue that it if in fact a \emph{strong} rank-revealing factorization, in the sense of \cite{GE96}, since with high probability $||R_{11}^{-1} R_{12}||$ will be small. We obtain the following theorem. 

\begin{theorem} \label{thm_rurv}  Let $A$ be an $n \times n$ matrix with singular values $\sigma_1, \ldots, \sigma_r, \sigma_{r+1}, \ldots, \sigma_n$. Let $\epsilon>0$ be a small number. Let $R$ be the matrix produced by the \textbf{RURV} algorithm on $A$.  Assume that $n$ is large and that $\frac{\sigma_1}{\sigma_r} = M$ and that $\frac{\sigma_{r}}{\sigma_{r+1}} > C n$ for some constants $M$ and $C>1$, independent of $n$. 

There exist constants $c_1$, $c_2$, and $c_3$ such that, with probability $1 - O(\epsilon)$, the following three events occur:
\begin{eqnarray*}
c_1 \frac{\sigma_{r}}{\sqrt{r (n-r)}} & \leq & \sigma_{\min}(R_{11}) \leq \sqrt{2} \sigma_r ~,\\
\sigma_{r+1} & \leq & \sigma_{\max} (R_{22}) \leq \left (3 M^3 \frac{C^2}{C^2-1}\right) \frac{r^2(n-r)^2 \sigma_{r+1}}{c_2}~, ~~~\mbox{and}\\
||R_{11}^{-1} R_{12}||_2 & \leq & c_3 \sqrt{r (n-r)}~.
\end{eqnarray*}
\end{theorem}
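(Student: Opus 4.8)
\noindent\emph{Proof idea.}\ The plan is to reduce to the case that $A$ is diagonal, read the blocks $R_{11},R_{12},R_{22}$ off the QR factorization $\hat A = UR$ of $\hat A = AV^{H}$, and rewrite all three quantities in terms of singular values of submatrices of a Haar-distributed orthogonal (or unitary) matrix, so that the only probabilistic ingredient is the sharp tail bound on $f_{r,n}$ from \cite{dumitriu10a}. Writing the SVD $A = \tilde U\Sigma\tilde V^{H}$ with $\Sigma = \diag(\Sigma_{1},\Sigma_{2})$, $\Sigma_{1} = \diag(\sigma_{1},\dots,\sigma_{r})$, $\Sigma_{2} = \diag(\sigma_{r+1},\dots,\sigma_{n})$, and setting $W \eqbd \tilde V^{H}V^{H}$ for $V$ the Haar matrix generated by \textbf{RURV}, invariance of Haar measure under unitary multiplication and inversion gives that $W$ is again Haar, while $\hat A = AV^{H} = \tilde U\Sigma W$; by uniqueness of QR up to diagonal signs (which affect none of $\sigma_{\min}(R_{11})$, $\sigma_{\max}(R_{22})$, $\|R_{11}^{-1}R_{12}\|_{2}$) I may therefore assume $A = \Sigma$, $\hat A = \Sigma W$. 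Partitioning $W = \bmat{cc} W_{11} & W_{12}\\ W_{21} & W_{22}\emat$ conformally, the first $r$ columns of $\hat A$ are $\hat A_{1} = \bmat{c}\Sigma_{1}W_{11}\\ \Sigma_{2}W_{21}\emat$ and the last $n-r$ are $\hat A_{2} = \bmat{c}\Sigma_{1}W_{12}\\ \Sigma_{2}W_{22}\emat$; since the law of $W$ is continuous, $W_{11}$ is invertible and $\hat A_{1}$ has full column rank with probability one, and I work on that event.

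The two lower-side inequalities are then deterministic. From $\hat A_{1} = U_{1}R_{11}$ (the first $r$ columns of $UR$), $R_{11}$ is the triangular factor of $\hat A_{1}$, so $\sigma_{\min}(R_{11}) = \sigma_{\min}(\hat A_{1}) = \sigma_{r}(\hat A_{1})$; deleting columns cannot increase singular values, so $\sigma_{r}(\hat A_{1}) \le \sigma_{r}(\hat A) = \sigma_{r}$ (indeed better than the claimed $\sqrt 2\,\sigma_{r}$), while on the other side $\sigma_{\min}(R_{11})^{2} = \lambda_{\min}(W_{11}^{H}\Sigma_{1}^{2}W_{11} + W_{21}^{H}\Sigma_{2}^{2}W_{21}) \ge \sigma_{r}^{2}\,\sigma_{\min}(W_{11})^{2}$, where $\sigma_{\min}(W_{11})$ is exactly $f_{r,n}$ in distribution. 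With $P \eqbd U_{1}U_{1}^{H}$ the orthoprojector onto $\mathrm{range}(\hat A_{1})$ one has $(I-P)\hat A_{1} = 0$ and $(I-P)\hat A_{2} = U_{2}R_{22}$, so $\sigma_{\max}(R_{22}) = \|(I-P)\hat A_{2}\|_{2} = \|(I-P)\hat A\|_{2}$; applying the Eckart--Young theorem with the rank-$r$ matrix $P\hat A$ then gives $\sigma_{r+1} = \sigma_{r+1}(\hat A) \le \|(I-P)\hat A\|_{2} = \sigma_{\max}(R_{22})$.

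The three remaining upper bounds, which are where the spectral gap enters, all follow from a single near-optimal least-squares candidate. Put $X \eqbd W_{11}^{-1}W_{12}$ and $Z \eqbd \hat A_{1}X$; then $\mathrm{range}(Z) \subseteq \mathrm{range}(\hat A_{1})$, so $(I-P)Z = 0$, and a direct computation gives $\hat A_{2} - Z = \bmat{c}0\\ \Sigma_{2}(W_{22} - W_{21}W_{11}^{-1}W_{12})\emat$. The Schur-complement identity $W_{22} - W_{21}W_{11}^{-1}W_{12} = \bigl((W^{-1})_{22}\bigr)^{-1}$, together with $W^{-1} = W^{H}$, turns this into $\bigl((W_{22})^{H}\bigr)^{-1}$, so $\|\hat A_{2} - Z\|_{2} \le \sigma_{r+1}/\sigma_{\min}(W_{22})$, and combining with $(I-P)Z = 0$, $\sigma_{\max}(R_{22}) = \|(I-P)(\hat A_{2} - Z)\|_{2} \le \sigma_{r+1}/\sigma_{\min}(W_{22})$, which is an upper bound of the claimed polynomial shape once $\sigma_{\min}(W_{22})$ is bounded below. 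For the third quantity, $R_{11}^{-1}R_{12} = \hat A_{1}^{+}\hat A_{2}$ (the least-squares solution), whence $R_{11}^{-1}R_{12} = X + \hat A_{1}^{+}(\hat A_{2} - Z)$ and $\|R_{11}^{-1}R_{12}\|_{2} \le \|W_{11}^{-1}W_{12}\|_{2} + \dfrac{1}{\sigma_{\min}(R_{11})}\cdot\dfrac{\sigma_{r+1}}{\sigma_{\min}(W_{22})}$; here the hypothesis $\sigma_{r}/\sigma_{r+1} > Cn$ with $C > 1$ is exactly what pulls the second term down to $O(\sqrt{r(n-r)})$ after inserting the lower bound on $\sigma_{\min}(R_{11})$ and $r(n-r) \le n^{2}/4$. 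The one probabilistic input now closes everything: by \cite{dumitriu10a}, for $n$ large $\sqrt{r(n-r)}\,f_{r,n}$ --- equivalently $\sqrt{r(n-r)}\,\sigma_{\min}(W_{11})$, which by the CS decomposition equals $\sqrt{r(n-r)}\,\sigma_{\min}(W_{22})$ --- converges in law to a distribution with no mass at $0$, so for any $\epsilon > 0$ there is $c(\epsilon) > 0$ with $\sigma_{\min}(W_{11}) = \sigma_{\min}(W_{22}) \ge c(\epsilon)/\sqrt{r(n-r)}$ with probability $1 - O(\epsilon)$, on which event all three displayed inequalities hold with $c_{1},c_{2},c_{3}$ functions of $c(\epsilon),M,C$.

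The step I expect to be the real obstacle is the probabilistic one. Everything above is deterministic linear algebra, but the bounds must hold with the stated $\sqrt{r(n-r)}$ and $r^{2}(n-r)^{2}$ scalings \emph{uniformly over all admissible $(r,n)$} as $n \to \infty$, and this uniformity is precisely what the sharp analysis of $f_{r,n}$ in \cite{dumitriu10a} supplies and what the cruder estimates of \cite{DDH07} do not. A secondary nuisance is tracking the polynomial-in-$n$ factors through the $\|R_{11}^{-1}R_{12}\|_{2}$ estimate and verifying that it is the hypothesis $C > 1$, not merely $C > 0$, that is needed there.
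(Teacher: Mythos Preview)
Your argument is correct and in one respect sharper than the paper's. Both proofs begin identically---reduce to $A=\Sigma$ via Haar invariance and recognize $R_{11}^{-1}R_{12}=\hat A_1^{+}\hat A_2$---but the paper then simply \emph{cites} \cite{DDH07,dumitriu10a} for the first two inequalities and proves only the third, by expanding $\hat A_1^{+}\hat A_2$ algebraically as $(X_{11}^{H}\Sigma_1^{2}X_{11}+X_{21}^{H}\Sigma_2^{2}X_{21})^{-1}(X_{11}^{H}\Sigma_1^{2}X_{12}+X_{21}^{H}\Sigma_2^{2}X_{22})$ and bounding the two resulting summands $T_1,T_2$ separately. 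Your route instead subtracts the least-squares candidate $Z=\hat A_1 W_{11}^{-1}W_{12}$ and invokes the Schur-complement identity $W_{22}-W_{21}W_{11}^{-1}W_{12}=(W_{22}^{H})^{-1}$ for unitary $W$. This yields the same $\sqrt{r(n-r)}$ bound on $\|R_{11}^{-1}R_{12}\|$ as the paper, but as a byproduct it also gives $\sigma_{\max}(R_{22})\le\sigma_{r+1}/\sigma_{\min}(W_{22})=O(\sqrt{r(n-r)})\,\sigma_{r+1}$, a full factor of $(r(n-r))^{3/2}$ stronger than the $r^{2}(n-r)^{2}$ bound stated in the theorem; the paper itself remarks after the proof that it suspects its $\sigma_{\max}(R_{22})$ bound is an artifact of lax analysis, and your Schur-complement argument confirms that suspicion. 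The CS-decomposition observation $\sigma_{\min}(W_{11})=\sigma_{\min}(W_{22})$ is a clean way to reduce everything to a single appeal to the $f_{r,n}$ asymptotics. One small correction: your own estimate of the second term in the $\|R_{11}^{-1}R_{12}\|$ bound works for any constant $C>0$, since $\frac{\sigma_{r+1}}{\sigma_r}\cdot\frac{r(n-r)}{c_1 c}\le\frac{\sqrt{r(n-r)}}{2Cc_1 c}\cdot\sqrt{r(n-r)}$; the constraint $C>1$ in the hypothesis is an artifact of the paper's $T_1$ bound, where a denominator of the form $1-\sigma_{r+1}^{2}\|X_{11}^{-1}\|^{2}/\sigma_r^{2}$ must remain positive.
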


\begin{proof} There are two cases of the problem, $r \leq n/2$ and $r> n/2$. 
Let $V$ be the Haar matrix used by the algorithm. 
From Theorem 2.4-1 in \cite{golubvanloan}, 
the singular values of $V(1;r, 1:r)$ when $r>n/2$ consist 
of $(2r-n)$ $1$'s and the singular values of $V((r+1):n ,(r+1):n)$. 
Thus, the case $r>n/2$ reduces to the case $r \leq n/2$.

The first two relationships follow immediately from \cite{DDH07} and \cite{dumitriu10a}; the third we will prove here. 

We use the following notation. Let $A = P \Sigma Q^{H} = P \cdot \diag(\Sigma_1, \Sigma_2) \cdot Q^{H}$ be the singular value decomposition of $A$, where $\Sigma_1 = \diag(\sigma_1, \ldots, \sigma_r)$ and $\Sigma_2 = \diag(\sigma_{r+1}, \ldots, \sigma_n)$. Let $V^H$ be the random unitary matrix in \textbf{RURV}. Then $X = Q^{H}V^{H}$ has the same distribution as $V^{H}$, by virtue of the fact that $V$'s distribution is uniform over unitary matrices. 

Write 
\[
X = \left [ \begin{array}{cc} X_{11} & X_{12} \\ X_{21} & X_{22} \end{array} \right ]~,
\]
where $X_{11}$ is $r \times r$, and $X_{22}$ is $(n-r) \times (n-r)$. 

Then 
\[
U^H P \cdot \Sigma X = R~;
\]
denote $\Sigma \cdot X = [Y_1, Y_2]$ where $Y_1$ is an $n \times r$ matrix and $Y_2$ in an $(n-r) \times n$ one. Since $U^{H}P$ is unitary, in effect 
\[
R_{11}^{-1} R_{12} = Y_1^{+} Y_2~,
\]
where $Y_1^{+}$ is the pseudoinverse of $Y_1$, i.e. $Y_1^{+} = (Y_1^H Y_1)^{-1} Y_1^H$. We obtain that
\[
R_{11}^{-1} R_{12} = \left (X_{11}^{H} \Sigma^2 X_{11} + X_{21}^{H} \sigma_2^2 X_{21} \right)^{-1} \left ( X_{11}^{H} \Sigma_1^2 X_{12} + X_{21}^{H} \Sigma_2^2 X_{22} \right)~.
\]
Examine now
\[
T_1 := \left (X_{11}^{H} \Sigma_1^2 X_{11} + X_{21}^H \Sigma^2_2 X_{21} \right )^{-1} X_{11}^{H} \Sigma_1^2 X_{12} = X_{11}^{-1} \left ( \Sigma_1^2 + (X_{21} X_{11}^{-1})^{H} \Sigma_2^2 (X_{21} X_{11}^{-1}) \right )^{-1} \Sigma_1^2 X_{12}~.
\]
Since $X_{12}$ is a submatrix of a unitary matrix, $||X_{12}|| \leq 1$, and thus
\begin{eqnarray} \label{r-bound}
||T_1|| \leq ||X_{11}||^{-1}  || I_r + \Sigma_1^{-2} (X_{21} X_{11}^{-1})^{H} \Sigma_2^2 (X_{21} X_{11}^{-1}) ||^{-1} \leq \frac{1}{\sigma_{\min}(X_{11})} \cdot \frac{1}{1 - \sigma_{r+1}^2 \sigma_{\min}(X_{11})^2 /\sigma_r^2}~.
\end{eqnarray}
Given that $\sigma_{\min}(X_{11}) = O( (\sqrt{r (n-r)})^{-1})$ with high probability and $\sigma_r /\sigma_{r+1} \geq C n$, it follows that the denominator of the last fraction in \eqref{r-bound} is $O(1)$. Therefore, there must exist a constant such that, with probability bigger than $1 - \epsilon$, $||T_1|| \leq c_3 \sqrt{r(n-r)}$. Note that $c_3$ depends on $\epsilon$.

The same reasoning applies to the term
\[
T_2:= \left (X_{11}^{H} \Sigma_1^2 X_{11} + X_{21}^H \Sigma^2_2 X_{21} \right )^{-1} X_{21} \Sigma_2^2 X_{22}~;
\]
to yield that 
\begin{eqnarray} \label{r-bound2}
||T_2|| \leq ||X_{11}^{-1}||^2 ||I_r + \Sigma_1^{-2} (X_{21} X_{11}^{-1})^{H} \Sigma_2^2 (X_{21} X_{11}^{-1}) ||^{-1} 
||\Sigma_1^{-2} || \cdot || \Sigma_2^{2}|| ,
\end{eqnarray}
because $||X_{22}|| \leq 1$.

The conditions imposed in the hypothesis ensure that 
$||X_{11}^{-1}|| \cdot  ||\Sigma_1^{-1}|| \cdot ||\Sigma_2|| = O(1)$, and thus $||T_2|| = O(1)$.

From \eqref{r-bound} and \eqref{r-bound2}, the conclusion follows. 
\end{proof}

\vspace{.25cm}

The bounds on $\sigma_{\min} (R_{11})$ are as good as any deterministic algorithm would provide (see \cite{GE96}). However, the upper bound on $\sigma_{\max} (R_{22})$ is much weaker than in corresponding deterministic algorithms. We suspect that this may be due to the fact that the methods of analysis are too lax, and that it is not intrinsic to the algorithm. 

This theoretical sub-optimality will not affect the performance of the algorithm in practice, as it will be used to differentiate between singular values $\sigma_{r+1}$ that are very small (polynomially close to $0$) and singular values $\sigma_r$ that are close to $1$, and thus far enough apart. 

Similarly to \textbf{RURV} we define the routine \textbf{RULV}, which performs the same kind of computation (and obtains a rank revealing decomposition of $A$), but uses \textbf{QL} instead of \textbf{QR}, and thus obtains a lower triangular matrix in the middle, rather than an upper triangular one. 

Given \textbf{RURV} and \textbf{RULV}, we now can give a method to find a randomized rank-revealing factorization for a product of matrices and inverses of matrices, \emph{without actually computing any of the inverses or matrix products}. This is a very interesting and useful procedure in itself, but we will also use it in Sections \ref{Algs} and \ref{CRA} in the analysis of a single step of our Divide-and-Conquer algorithms. 

Suppose we wish to find a randomized rank-revealing factorization $M_k = URV$ for the matrix $M_k = A_1^{m_1} \cdot A_2^{m_2} \cdot \ldots A_k^{m_k}$, where $A_1, \ldots, A_k$ are given matrices, and $m_1, \ldots, m_k \in \{-1,1\}$, without actually computing $M_k$ or any of the inverses. 

Essentially, the method performs \textbf{RURV} or, depending on the power, \textbf{RULV}, on the last matrix of the product, and then uses a series of \textbf{QR}/\textbf{RQ} to ``propagate'' an orthogonal/unitary matrix to the front of the product, while computing factor matrices from which (if desired) the upper triangular $R$ matrix can be obtained. A similar idea was explored by G.W. Stewart in \cite{Stewart95} to perform graded \textbf{QR}; although it was suggested that such techniques can be also applied to algorithms like $\textbf{URV}$, no randomization was used. 

The algorithm is presented in pseudocode below. 

\begin{algorithm}
\protect\caption{Function $U =$\textbf{GRURV}$(k; A_1, \ldots, A_k; m_1, \ldots, m_k)$, computes the $U$ factor in a randomized rank revealing decomposition of the product matrix $M_k = A_1^{m_1} \cdot A_2^{m_2} \cdot \ldots A_k^{m_k}$, where $m_1, \ldots, m_k \in \{-1,1\}$.} 
\label{grurv}\begin{algorithmic}[1]
\IF{$m_k = 1$,}
\STATE $[U,R_k,V] = \textbf{RURV}(A_k)$
\ELSE
\STATE $[U,L_k,V] = \textbf{RULV}(A_k^{H})$
\STATE $R_k =L_k^{H}$ 
\ENDIF
\STATE $U_{current} = U$
\FOR{$i = k-1$ downto $1$}
\IF{$m_i = 1$,}
\STATE $[U,R_i] = \textbf{QR}(A_i \cdot U_{current})$
\STATE $U_{current} = U$
\ELSE 
\STATE $[U,R_i] = \textbf{RQ}(U_{current}^{H} \cdot A_i)$
\STATE $U_{current} = U^H$
\ENDIF
\ENDFOR
\RETURN $U_{current}$, optionally $V, R_1, \ldots, R_k$
\end{algorithmic}
\end{algorithm}

\begin{lemma}
\textbf{GRURV} (\emph{Generalized Randomized URV}) computes the rank-revealing decomposition $M_k = U_{current}R_1^{m_1} \ldots R_k^{m_k} V$. 
\end{lemma}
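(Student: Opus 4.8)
The plan is to first establish the algebraic identity $M_k = U_{current}\,R_1^{m_1}\cdots R_k^{m_k}\,V$ by following the quantities maintained by Algorithm~\ref{grurv} through its execution, and then to note that this identity, together with the fact that $V$ is Haar-distributed and independent of $M_k$, delivers the rank-revealing conclusion via Theorem~\ref{thm_rurv}.

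I would begin with the initialization. If $m_k=1$, the call $[U,R_k,V]=\textbf{RURV}(A_k)$ produces $A_k=UR_kV$ with $V$ Haar and $R_k$ upper triangular, so with $U_{current}=U$ we have $A_k^{m_k}=U_{current}R_k^{m_k}V$. If $m_k=-1$, the call $[U,L_k,V]=\textbf{RULV}(A_k^H)$ produces $A_k^H=UL_kV$ with $V$ Haar and $L_k$ lower triangular; setting $R_k=L_k^H$ (upper triangular) gives $A_k=V^HR_kU^H$, hence $A_k^{-1}=UR_k^{-1}V=U_{current}R_k^{m_k}V$. In both cases, after initialization
\[
M_k=\big(A_1^{m_1}\cdots A_{k-1}^{m_{k-1}}\big)\,U_{current}\,R_k^{m_k}\,V .
\]

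Next I would prove, by downward induction on $i$, the loop invariant that before the iteration with index $i$ one has
\[
M_k=\big(A_1^{m_1}\cdots A_i^{m_i}\big)\,U_{current}\,R_{i+1}^{m_{i+1}}\cdots R_k^{m_k}\,V ,
\]
the base case $i=k-1$ being the initialization identity above. For the step, assume this holds before iteration $i$. If $m_i=1$, then $[U,R_i]=\textbf{QR}(A_iU_{current})$ gives $A_iU_{current}=UR_i$, so after the update $U_{current}\leftarrow U$ one has $A_i^{m_i}U_{current}^{\text{old}}=U_{current}^{\text{new}}R_i^{m_i}$. If $m_i=-1$, then $[U,R_i]=\textbf{RQ}\big((U_{current}^{\text{old}})^HA_i\big)$ gives $(U_{current}^{\text{old}})^HA_i=R_iU$, hence $A_i^{-1}=U^HR_i^{-1}(U_{current}^{\text{old}})^H$, and after the update $U_{current}\leftarrow U^H$ one has $A_i^{m_i}U_{current}^{\text{old}}=U^HR_i^{-1}=U_{current}^{\text{new}}R_i^{m_i}$. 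In either case, substituting this block into the invariant before iteration $i$ yields the invariant before iteration $i-1$, and after the final iteration $i=1$ the invariant reads $M_k=U_{current}\,R_1^{m_1}\cdots R_k^{m_k}\,V$.

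Finally I would verify the rank-revealing claim. The matrices $U_{current}$ and $V$ are unitary (products of unitary factors), and $R:=R_1^{m_1}\cdots R_k^{m_k}$ is upper triangular since each $R_i$ is upper triangular and products and inverses of upper triangular matrices are again upper triangular. Moreover $V$ is Haar-distributed and, because the $A_i$ are fixed matrices, independent of $M_k$; hence $M_kV^H=U_{current}R$ exhibits $(U_{current},R,V)$ as having the same distribution (up to the immaterial sign ambiguity in $\textbf{QR}$) as the output of $\textbf{RURV}(M_k)$ (Algorithm~\ref{rurv}), so Theorem~\ref{thm_rurv} shows it to be a strong rank-revealing decomposition of $M_k$. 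The only delicate point is the bookkeeping in the $m_i=-1$ branch --- the transpose inside the $\textbf{RULV}$/$\textbf{RQ}$ calls and the update $U_{current}\leftarrow U^H$ rather than $U_{current}\leftarrow U$ --- where one must place the inverse $R_i^{-1}$ correctly so that the unitary factors telescope; everything else is routine substitution.
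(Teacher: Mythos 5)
Your proof is correct and follows essentially the same approach as the paper's: the paper verifies the four sign cases explicitly for $k=2$ and asserts the general case ``results immediately through simple induction,'' while you carry out that induction explicitly via a loop invariant, with the same case computations for the \textbf{RURV}/\textbf{RULV} initialization and the \textbf{QR}/\textbf{RQ} propagation steps. Your closing observation that the result coincides with performing \textbf{QR} on $M_kV^H$ (hence inherits the rank-revealing property) is exactly the paper's concluding remark.
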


\begin{proof} Let us examine the case when $k = 2$ ($k>2$ results immediately through simple induction). 

Let us examine the cases:
\begin{enumerate}
\item $m_2 = 1$. In this case, $M_2 = A_1^{m_1} A_2$; the first \textbf{RURV} yields
$M_2 = A_1^{m_1} UR_2V$. \begin{enumerate}
\item if $m_1 = 1$, $M_2 = A_1 UR_2V$; performing \textbf{QR} on $A_1 U$ yields
$M_2= U_{current} R_1 R_2 V$.
\item if $m_1 = -1$, $M_2 = A_1^{-1} UR_2V$; performing \textbf{RQ} on $U^{H}A_1$ yields $M_2 = U_{current} R_1^{-1} R_2 V$.
\end{enumerate}
\item $m_2 = -1$. In this case, $M_2 = A_1^{m_1} A_2^{-1}$; the first \textbf{RULV} yields $M_2 = A_1^{m_1} U L_2^{-H} V = A_1^{m_1} U R_2^{-1} V$. \begin{enumerate}
\item if $m_1 = 1$, $M_2 = A_1 U L_2^{-H} V = A_1 U R_2^{-1} V$; performing \textbf{QR} on $A_1 U$ yields $M_2 = U_{current} R_1 R_2^{-1} V$.
\item finally, if $m_2 = -1$,  $M_2 = A_1^{-1} U L_2^{-H} V = A_1^{-1} U R_2^{-1}V$; performing \textbf{RQ} on $U^{H}A_1$ yields $M_2 = U_{current} R_1^{-1} R_2^{-1} V$.
\end{enumerate}
\end{enumerate}

Note now that in all cases $M_k = U_{current} R_1^{m_1} \ldots R_k^{m_k} V$. Since the inverse of an upper triangular matrix is upper triangular, and since the product of two upper triangular matrices is upper triangular, it follows that $R:=R_1^{m_1} \ldots R_k^{m_k}$ is upper triangular. Thus, we have obtained a rank-revealing decomposition of $M_k$; the same rank-revealing decomposition as if we have performed $QR$ on $M_k V^{H}$.
\end{proof}

By using the stable \textbf{QR} and \textbf{RQ} algorithms described in \cite{DDH07}, as well as \textbf{RURV}, we can guarantee the following result.

\begin{theorem} \label{thm_grurv} The result of the algorithm \textbf{GRURV} is the same as the result of 
\textbf{RURV} on the (explicitly formed) matrix $M_k$; therefore, given a large gap in the 
singular values of $M_k$, $\sigma_{r+1} \ll \sigma_r \sim \sigma_1$, 
the algorithm \textbf{GRURV} produces a rank-revealing decomposition with high probability.
\end{theorem}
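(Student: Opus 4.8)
The plan is to reduce Theorem~\ref{thm_grurv} to the preceding Lemma on \textbf{GRURV} together with Theorem~\ref{thm_rurv}, the only new points being (i) that the orthogonal/unitary matrix $V$ generated inside \textbf{GRURV} is genuinely Haar-distributed and independent of the data, and (ii) that computing everything without ever forming $M_k$ or its inverses does not destroy the rank-revealing structure.

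First, the exact-arithmetic identification. The Lemma already gives $M_k = U_{current}\, R\, V$ with $R = R_1^{m_1}\cdots R_k^{m_k}$ upper triangular, i.e.\ $(U_{current},R)$ is a QR factorization of $M_kV^H$. I would observe that in the regime of interest, $\sigma_{r+1}\ll\sigma_r\sim\sigma_1$, the matrix $M_k$ (hence $M_kV^H$) is nonsingular, so this QR factorization is unique up to multiplying $U_{current}$ on the right and $R$ on the left by a diagonal unitary matrix; and the three quantities that matter — $\sigma_{\min}(R_{11})$, $\sigma_{\max}(R_{22})$, and $\|R_{11}^{-1}R_{12}\|_2$ — are invariant under that ambiguity. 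Hence the output of \textbf{GRURV} is exactly the output \textbf{RURV} would produce if run on the explicitly formed $M_k$ with the same draw of $V$.

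Second, the distribution of $V$. Inside \textbf{GRURV}, $V$ is produced by \textbf{RURV}$(A_k)$ (when $m_k=1$) or \textbf{RULV}$(A_k^H)$ (when $m_k=-1$); in both cases it comes from the \textbf{QR}/\textbf{QL} factor of a matrix of i.i.d.\ $N(0,1)$ entries drawn independently of $A_1,\ldots,A_k$, so $V$ is Haar-distributed and independent of $M_k$ — exactly what \textbf{RURV} uses when applied to $M_k$ directly. Combining with the previous step, the pair $(M_k,V)$ arising in \textbf{GRURV} has the same joint distribution as the pair $(M_k,V')$ arising in \textbf{RURV}$(M_k)$, and $R$ is the same deterministic function of that pair in both algorithms. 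Therefore the conclusion of Theorem~\ref{thm_rurv}, applied to $M_k$ at the gap location $r$ (using $\sigma_1/\sigma_r=O(1)$ and $\sigma_r/\sigma_{r+1}>Cn$), transfers verbatim: with probability $1-O(\epsilon)$, $R$ is a strong rank-revealing factorization of $M_k$. Finally, the finite-precision part: I would invoke the backward stability of the \textbf{QR} and \textbf{RQ} routines of \cite{DDH07} to argue that each of the $k$ propagation steps computes the exact factors of a tiny normwise relative perturbation of its input, so that the computed $(U_{current},R_1,\ldots,R_k,V)$ are the exact \textbf{GRURV} factors of perturbed data $A_i(I+E_i)$ with $\|E_i\|$ at the level of machine precision times a modest polynomial in $n$; since the hypothesis posits the gap $\sigma_r/\sigma_{r+1}>Cn$ with room to spare, standard singular-value perturbation bounds show this moves $\sigma_r$, $\sigma_{r+1}$, and the three rank-revealing quantities by at most a polynomial-in-$n$ factor, so the property survives.

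The main obstacle I expect is this last (finite-precision) step: carefully composing the $k$ successive backward errors through the alternating \textbf{QR}/\textbf{RQ} propagation — in particular for the inverse powers $m_i=-1$, where the perturbation enters $A_i^{-1}$ and must be pushed back to a perturbation of $A_i$ — and then checking that the accumulated perturbation is small relative to the assumed spectral gap, so that neither $\sigma_{\min}(R_{11})$ nor $\sigma_{\max}(R_{22})$ is pushed into the wrong regime. By contrast, the exact-arithmetic content (the first two steps) is essentially bookkeeping layered on top of the Lemma and Theorem~\ref{thm_rurv}.
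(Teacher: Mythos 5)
Your proposal is correct and follows essentially the same route as the paper, which states Theorem~\ref{thm_grurv} as an immediate consequence of the preceding Lemma (GRURV yields the same factorization as performing \textbf{QR} on $M_kV^H$) together with the stability of the \textbf{QR}/\textbf{RQ} routines of \cite{DDH07} and the probabilistic guarantee of Theorem~\ref{thm_rurv}. You simply make explicit several points the paper leaves implicit --- the Haar distribution and independence of $V$, the uniqueness of the QR factor up to a diagonal unitary, and the composition of backward errors in finite precision --- so your write-up is, if anything, more complete than the paper's.
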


Note that we may also return the matrices $R_1, \ldots, R_k$, from which the factor $R$ can later 
be reassembled, if desired (our algorithms only need $U_{current}$, not $R =R_1^{m_1} \ldots R_k^{m_k}$,
and so we will not compute it).


\section{Divide-and-Conquer:  Four Versions} \label{Algs} 

As mentioned in the Introduction, the idea for the divide-and-conquer algorithm we propose has been first introduced in \cite{malyshev89}; it was subsequently made stable in \cite{baidemmelgu94}, and has then been modified in \cite{DDH07} to include a randomized rank-revealing decomposition, in order to minimize the complexity of linear algebra (reducing it to the complexity of matrix multiplication, while keeping algorithms stable). 
The version of \textbf{RURV} presented in this paper is the \emph{only} complete one, 
through the introduction of \textbf{GRURV}; 
also, the new analysis shows that it has stronger rank-revealing properties than previously shown. 

Since in \cite{DDH07} we only sketched the merger of the rank-revealing decomposition and the eigenvalue/singular value decomposition algorithms, we present the full version in this section, as it will be needed to analyze communication. We give the most general version \textbf{RGNEP} (which solves the generalized, non-symmetric eigenvalue problem), as well as versions \textbf{RNEP} (for the non-symmetric eigenvalue problem), \textbf{RSEP} (for the symmetric eigenvalue problem), and \textbf{RSVD} (for the singular value problem). The acronyms here are the same used in LAPACK, with the exception of the first letter ``R'', which stands for ``randomized''.

In this section and throughout the rest of the paper, \textbf{QR}, \textbf{RQ}, \textbf{QL}, and \textbf{LQ} represent functions/routines computing the corresponding factorizations. Although these factorizations are well-known, the implementations of the routines are not the ``standard'' ones, but the communication-avoiding ones described in \cite{BDHS10}. We only consider the latter, for the purpose of minimizing communication.

\subsection{Basic splitting algorithms (one step)}

Let $\mathcal{D}$ be the interior of the unit disk. Let $A$, $B$ be two $n \times n$ matrices with the property that the pencil $(A,B)$ has no eigenvalues \emph{on} $\mathcal{D}$. The algorithm below ``splits'' the eigenvalues of the pencil $(A,B)$ into two sets; the ones \emph{inside} $\mathcal{D}$ and the ones \emph{outside} $\mathcal{D}$. 

The method is as follows:
\begin{enumerate}
\item Compute $(I+(A^{-1}B)^{2^k})^{-1}$ implicitly. 
Roughly speaking, this maps the eigenvalues inside the unit circle to $0$ and those outside to $1$.
\item Compute a rank-revealing decomposition to find the right deflating subspace corresponding 
to eigenvalues inside the unit circle. 
This is spanned by the leading columns of a unitary matrix $Q_R$. 
\item Analogously compute $Q_L$ from $(I+(A^{-H}B^H)^{2^k})^{-1}$.
\item Output the block-triangular matrices
\[
\hat{A} = Q_L^H A Q_R = \left ( \begin{array}{cc} A_{11} & A_{12} \\ E_{21} & A_{22} \end{array} \right )~,~~~~~\hat{B} = Q_L^H B Q_R = \left ( \begin{array}{cc} B_{11} & B_{12} \\ F_{21} & B_{22} \end{array} \right )~.
\]
\end{enumerate}

The pencil $(A_{11}, B_{11})$ has no eigenvalues inside $\mathcal{D}$; 
the pencil $(A_{22}, B_{22})$ has no eigenvalues outside $\mathcal{D}$. 
In exact arithmetic and after complete convergence, we should have 
$E_{21} = F_{21} = 0$. In the presence of floating point error with a finite number of iterations, we use $||E_{21}||_1/||A||_1$ and $||F_{21}||_1/||B||_1$ to measure the stability of the computation. 

To simplify the main codes, we first write a routine to perform (implicitly) repeated squaring of the quantity $A^{-1}B$. The proofs for correctness are in the next section.

\begin{algorithm}[ht!]
\protect\caption{Function \textbf{IRS}, performs implicit repeated squaring of the quantity $A^{-1}B$ for a pair of matrices $(A,B)$}
\begin{algorithmic}[1]
\label{rsq}
\STATE Let $A_0 = A$ and $B_0 = B$; j = 0.
\REPEAT 
\STATE \begin{eqnarray*} \left ( \begin{array}{c} B_j \\ - A_j \end{array} \right ) & = & \left ( \begin{array}{cc} Q_{11} & Q_{12} \\ Q_{21} & Q_{22} \end{array} \right ) \left ( \begin{array}{c} R_j \\ 0 \end{array} \right )~, \\
A_{j+1} & = & Q^H_{12} A_j~, \\
B_{j+1} & = &  Q^H_{22} B_j~, \end{eqnarray*}
\IF{$ ||R_j - R_{j-1}||_1 \leq  \tau ||R_{j-1}||_1~,$   \hspace{3cm} \emph{... convergence!}

}
\STATE $ p = j+1~,$ 
\ELSE  
\STATE $j = j+1$
\ENDIF
\UNTIL{convergence or $j>maxit$.}
\RETURN $A_p$, $B_p$.
\end{algorithmic}
\end{algorithm}

Armed with repeated squaring, we can now perform a step of divide-and-conquer. We start with the most general case of a two-matrix pencil.

\begin{algorithm}[ht!]
\protect\caption{Function \textbf{RGNEP}, performs a step of divide-and-conquer on a pair of matrices $A$ and $B$}
\begin{algorithmic}[1]
\label{rgnep}
\STATE $[A_p, B_p] = $\textbf{IRS}$(A,B)$.
\STATE $Q_R = $\textbf{GRURV}$(2, A_p+B_p, A_p, -1, 1)$.
\STATE $[A_p, B_p] = $\textbf{IRS}$(A^{H}, B^{H})$.
\STATE $Q_L = $\textbf{GRURV}$(2, A_P^{H}, (A_p+B_p)^{H}, 1, -1)$. 
\STATE \begin{eqnarray} \label{unu} \hat{A} &:=& Q_L^{H} A Q_R = \left ( \begin{array}{cc} A_{11} & A_{12} \\ E_{21} & A_{22} \end{array} \right )~,\\ 
\label{doi} \hat{B} &:=& Q_L^H BQ_R = \left ( \begin{array}{cc} B_{11} & B_{12} \\ F_{21} & B_{22} \end{array} \right )~;
\end{eqnarray} 
the dimensions of the subblocks are chosen so as to minimize $||E_{21}||_1/||A||_1+||F_{21}||_1/||B||_1$;
\RETURN $(\hat{A}, \hat{B}, Q_L, Q_R)$ and the dimensions of the subblocks.
\end{algorithmic}
\end{algorithm}

\vspace{.35cm}

The algorithm \textbf{RGNEP} starts with the right deflating subspace:
line 1 does implicit repeated squaring of $A^{-1}B$,
followed by a rank-revealing decomposition of $(A_p+B_p)^{-1}A_p$ (line 2).
The left deflating subspace is handled similarly in lines 3 and 4.
Finally, line 5 divides the pencil by choosing the split that minimizes the sum 
of the relative norms of the bottom-left submatrices, 
e.g. $||E_{21}||_1/||A||_1+||F_{21}||_1/||B||_1$. 
If this norm is small enough, convergence is successful.  In this case
$E_{21}$ and $F_{21}$ may be zeroed out, dividing the problem into
two smaller ones, given by $(A_{11},B_{11})$ and $(A_{22},B_{22})$.
Note that if more than one pair of blocks $(E_{21},F_{21})$ is small enough
to zero out, the problem may be divided into more than two smaller ones.

\begin{algorithm}[ht!]
\protect\caption{Function \textbf{RNEP}, performs a step of divide-and-conquer on a non-hermitian matrix $A$; $I$ here is the identity matrix.}
\begin{algorithmic}[1]
\label{rnep}
\STATE $[A_p, B_p] = $\textbf{IRS}$(A,I)$.
\STATE $Q = $\textbf{GRURV}$(2, A_p+B_p, A_p, -1, 1)$.
\STATE \begin{eqnarray*} \label{trei} \hat{A} &:=& Q^{H} A Q = \left ( \begin{array}{cc} A_{11} & A_{12} \\ E_{21} & A_{22} \end{array} \right )~,
\end{eqnarray*}
the dimensions of the subblocks are chosen so as to minimize $||E_{21}||_1/||A||_1$;
\RETURN $(\hat{A}, Q)$ and the dimensions of the blocks.
\end{algorithmic}
\end{algorithm}

\vspace{.35cm}

The algorithm \textbf{RNEP} deals with the non-symmetric eigenvalue problem, 
that is, when $A$ is non-hermitian and $B = I$. In this case, we skip the 
computation of the left deflating subspace, since it is the same as the right one; 
also, it is sufficient to consider $||E_{21}||_1 / ||A||_1$, since it is the 
backward error in the computation. 
Note that \eqref{doi} is not needed, as in exact arithmetic $\hat{B} = I$.

\begin{algorithm}[ht!]
\protect\caption{Function \textbf{RSEP}, performs a step of divide-and-conquer on a hermitian matrix $A$; $I$ here is the identity matrix.}
\begin{algorithmic}[1]
\label{rsep}
\STATE $[A_p, B_p] = $\textbf{IRS}$(A,I)$.
\STATE $[U, R_1, V] = $\textbf{GRURV}$(2, A_p+B_p, A_p, -1, 1)$.
\STATE \begin{eqnarray*} \label{five} \hat{A} &:=& Q^{H} A Q \\
\label{six} \hat{A} & := & \frac{\hat{A}+\hat{A}^{H}}{2} \\
\label{seven} \hat{A} & := & \left ( \begin{array}{cc} A_{11} & E_{21}^{T} \\ E_{21} & A_{22} \end{array} \right )~,
\end{eqnarray*}
the dimensions of the subblocks are chosen so as to minimize $||E_{21}||_1/||A||_1$;
\RETURN $(\hat{A}, Q)$ and the dimensions of the blocks.
\end{algorithmic}
\end{algorithm}

\vspace{.35cm}

The algorithm \textbf{RSEP} is virtually identical to \textbf{RNEP}, with the only difference in the second equation of line 3, where we enforce the symmetry of $\hat{A}$. We choose to write two separate pieces of code for \textbf{RSEP} and \textbf{RNEP}, as the analysis of theses two codes in Section \ref{CRA} differs.

We choose to explain the routine for computing singular values of a matrix $A$,  rather than present it in pseudocode. Instead of calculating the singular values of $A$ directly, we construct the hermitian matrix $B = \left [\begin{array}{cc} 0 & A \\ A^{H} & 0 \end{array} \right]$ and compute its eigenvalues (which are the singular values of $A$ and their negatives) and eigenvectors (which are concatenations of left and right singular vectors of $A$). Thus, computing singular values completely reduces to computing eigenvalues. 

\subsection{One step: correctness and reliability of implicit repeated squaring} \label{CRA} 

Assume for simplicity that all matrices involved are invertible, and let us examine the basic step of the algorithm \textbf{IRS}. It is easy to see that
\[
\left ( \begin{array}{cc} Q_{11}^{H} & Q_{21}^{H} \\ Q_{12}^{H} & Q_{22}^{H} \end{array} \right ) \left( \begin{array}{c} B_j \\ - A_j \end{array} \right ) = \left ( \begin{array}{c} Q_{11}^{H} B_j - Q_{21}^{H} A_j \\ Q_{12}^{H} B_j - Q_{22}^{H} A_j \end{array} \right ) = \left ( \begin{array}{c} R_j \\ 0 \end{array} \right)~,
\]
thus $Q_{12}^{H} B_j = Q_{22}^{H} A_j$, and then $B_j A_j^{-1} = Q_{12}^{-H} Q_{22}$; finally, $$A_{j+1}^{-1} B_{j+1} = A_{j}^{-1} Q_{12}^{-H} Q_{22}^{H}B_j = (A_j^{-1} B_j )^2~,$$ proving that the algorithm \textbf{IRS} repeatedly squares the eigenvalues, sending those outside the unit circle to $\infty$, and the ones inside to $0$.

As in the proof of correctness from \cite{baidemmelgu94}, we note that
\[
(A_p+B_p)^{-1} A_p = (I+A_p^{-1} B_p)^{-1} = (I+(A^{-1}B)^{2^p})^{-1}~,
\]
and that the latter matrix approaches $P_{R, |z|>1}$,  the projector onto the right deflating subspace corresponding to eigenvalues outside the unit circle. 


\subsection{High level strategy for divide-and-conquer, single non-symmetric matrix case}
\label{sec:HLS:Nonsymmetric}

For the rest of this section, we will assume that the matrix $B = I$ and that the matrix $A$ is diagonalizable, with eigenvector matrix $S$ and spectral radius $\rho(A)$.

The first step will be to find (e.g., using Gershgorin bounds) a radius $R \geq \rho(A)$, so that we are assured that all eigenvalues are in the disk of radius $R$. Choose now a random angle $\theta$, uniformly from $[0, 2\pi]$, and draw a radial line $\mathcal{R}$ through the origin, at angle $\theta$. The line $\mathcal{R}$ intersects the circle centered at the origin of radius $R/2$ at two points defining a diameter of the circle, $A$ and $B$. We choose a random point $a$ uniformly on the segment $AB$ and construct a line $\mathcal{L}$ perpendicular to $AB$ through $a$.

The idea behind this randomization procedure is to avoid not just ``hitting'' the eigenvalues, but also ``hitting'' some appropriate $\epsilon$-pseudospectrum of the matrix, defined below (the right choice for $\epsilon$ will be discussed later).

\begin{definition} (Following \cite{TE05}.) The $\epsilon$-pseudospectrum of a matrix $A$ is defined as
\[
\Lambda_{\epsilon}(A) = \{z \in \mathbb{C} ~|~ z ~\mbox{is an eigenvalue of}~A+E~\mbox{for some}~E~\mbox{with}~||E||_2\leq \epsilon \}
\]
\end{definition}

We measure progress in two ways: one is when a split occurs, i.e., we separate some part of the pseudospectrum, and the second one is when a large piece of the disk is determined not to contain any part of the pseudospectrum. This explains limiting the choice of $\mathcal{L}$ to only half the diameter of the circle; this way, we ensure that as long as we are not ``hitting'' the $\epsilon$-pseudospectrum, we are making progress. See Figure \ref{pr_nopr}.

\begin{figure}[ht!] 
\begin{center}
\includegraphics[width=6in]{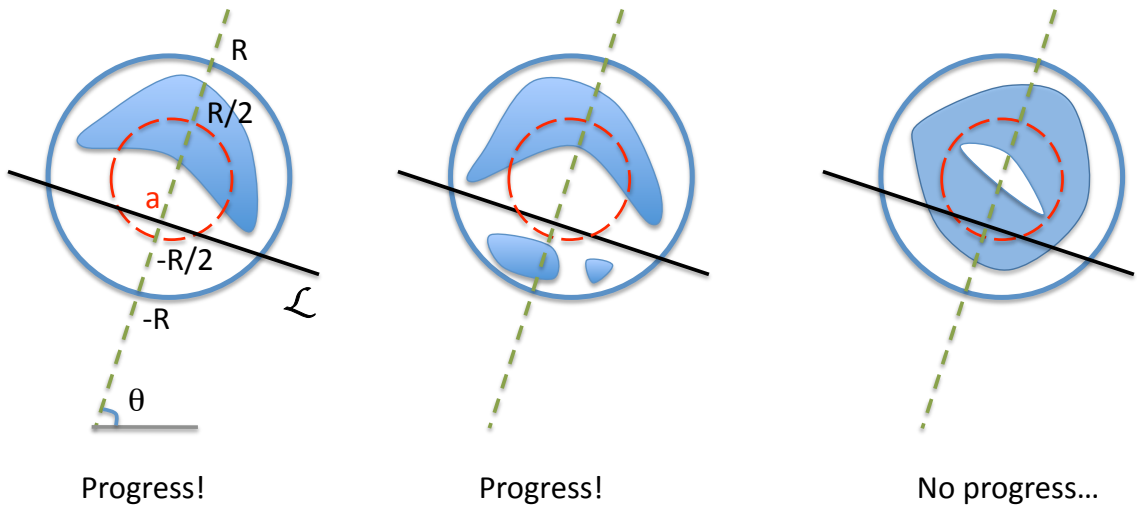}
\caption{The possible outcomes of choosing a splitting line. The shaded shapes represent pseudospectral components, the dashed inner circle has radius $R/2$, the dashed line is at the random angle $\theta$, and the solid line is $\mathcal{L}$.} 
\label{pr_nopr}
\end{center}
\end{figure}

\begin{remark} It is thus possible that the ultimate answer could be a convex hull of the pseudospectrum; this is an alternative to the output that classical (or currently implemented) algorithms yield, which samples the pseudospectrum. Since in general pseudospectra can be arbitrarily complicated (see \cite{trefethenreichel}), the amount of information gained may be limited; for example, the genus of the pseudospectrum will not be specified (i.e., the output will ignore the presence of ``holes'', as in the third picture on Figure \ref{pr_nopr}). However, in the case of convex or almost convex pseudospectra, a convex hull may be more informative than sampling.
\end{remark}
\begin{remark}
One could also imagine a method by which one could attempt to detect ``holes'' in the pseudospectrum, e.g., by sampling random points inside of the convex hull obtained and ``fitting'' disks centered at the points. Providing a detailed explanation for this case falls outside the scope of this paper. 
\end{remark}

Let us now compute a bound on the probability that the line $\mathcal{L}$ does not intersect a particular pseudospectrum of $A$ (which will depend on the machine precision, norm of $A$, etc.). To this extent, we will first examine the probability that the line $\mathcal{L}$ is at (geometrical) distance at least $\hat{\epsilon}$ from the \emph{spectrum} of $A$. 

Note that the problem is rotationally invariant; therefore it suffices to consider the case when $\theta = 0$, and simply examine the case when $\mathcal{L}$ is $x = a$, i.e., our random line is a vertical line passing through the point $a$ on the segment $[-R/2, R/2]$. 

Consider now the $n$ $\hat{\epsilon}$-balls that $\mathcal{L}$ must not intersect, and put a vertical ``tube'' around each one of them (as in Figure \ref{tube}). These tubes will intersect the segment $[-R,R]$ in $n$ (possibly overlapping) small segments (of length $2 \hat{\epsilon}$). In the worst-case scenario, they are all inside the segment $[-R/2, R/2]$, with no tubes overlapping. If the point $a$, chosen uniformly, falls into any one of them, the random line $\mathcal{L}$ will fall within $\hat{\epsilon}$ of the corresponding eigenvalue. This happens with probability at most $2 \hat{\epsilon} n/R$. 

The above argument proves the following lemma:

\begin{lemma} \label{dist}
Let $d_g$ be the geometrical distance between the (randomly chosen) line $\mathcal{L}$ and the spectrum. Then
\begin{eqnarray} \label{prob_bound}
P[d_g \geq \hat{\epsilon}] \geq \max \{1 - \frac{2n \hat{\epsilon}}{R}~, 0\}~.
\end{eqnarray}
\end{lemma}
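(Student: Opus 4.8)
The plan is to turn the ``vertical tube'' picture sketched above into a one-line union bound. First I would exploit the rotational invariance of the construction: the angle $\theta$ is drawn uniformly from $[0,2\pi]$, and both the splitting line $\mathcal{L}$ and the quantity ``geometric distance to the spectrum'' transform equivariantly under the rotation $z \mapsto e^{i\theta}z$ about the origin, so the distribution of $d_g$ is unchanged if we condition on $\theta = 0$. With $\theta = 0$ the line $\mathcal{L}$ is the vertical line $\{x = a\}$ and $a$ is uniform on the diameter $[-R/2, R/2]$ of the circle of radius $R/2$, a segment of total length $R$.

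Next I would reduce the geometric condition to a one-dimensional one. The distance from the vertical line $\{x = a\}$ to a point $z = x_0 + i y_0 \in \C$ is exactly $|a - x_0|$, independent of $y_0$. Hence, writing $\lambda_1,\dots,\lambda_n$ for the eigenvalues of $A$ and $x_j = \operatorname{Re}\lambda_j$, the event $\{d_g < \hat{\epsilon}\}$ is precisely $\{\, a \in \bigcup_{j=1}^n (x_j - \hat{\epsilon},\, x_j + \hat{\epsilon}) \,\}$, a union of $n$ intervals each of length $2\hat{\epsilon}$. The Lebesgue measure of this union (even before intersecting it with $[-R/2, R/2]$) is at most $2n\hat{\epsilon}$, with equality in the worst case where the intervals are pairwise disjoint and contained in the segment. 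Dividing by the length $R$ of the segment on which $a$ is uniform gives $P[d_g < \hat{\epsilon}] \le 2n\hat{\epsilon}/R$, so $P[d_g \ge \hat{\epsilon}] \ge 1 - 2n\hat{\epsilon}/R$; combining with the trivial bound $P[d_g \ge \hat{\epsilon}] \ge 0$ yields the stated maximum.

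I do not expect a genuine obstacle here: the estimate is a textbook union bound. The only two points worth stating carefully are (i) the reduction to $\theta = 0$, which must invoke the rotational symmetry of the joint law of $(\theta, a)$ together with the invariance of ``distance to the spectrum'' under $z \mapsto e^{i\theta} z$; and (ii) the observation that restricting $a$ to only half the diameter (rather than to a full chord of the disk of radius $R$) is irrelevant for \emph{this} bound — that restriction is needed elsewhere, to guarantee geometric progress when no split occurs — since here we simply bound the bad measure by $2n\hat{\epsilon}$ regardless of where the tubes fall.
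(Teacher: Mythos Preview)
Your proposal is correct and matches the paper's argument exactly: reduce to $\theta = 0$, observe that the bad set for $a$ is a union of $n$ real intervals each of length $2\hat{\epsilon}$, and apply the union bound to get $P[d_g < \hat{\epsilon}] \le 2n\hat{\epsilon}/R$. One small caveat on phrasing: the conditional distribution of $d_g$ given $\theta$ does in general depend on $\theta$ (through the projections $\mathrm{Re}(e^{-i\theta}\lambda_j)$), so ``the distribution of $d_g$ is unchanged if we condition on $\theta=0$'' is not literally true; what both you and the paper actually use is that the bound $2n\hat{\epsilon}/R$ holds uniformly in $\theta$, which your computation for $\theta=0$ establishes verbatim for every $\theta$.
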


\vspace{-.5cm}

\begin{figure}[ht!]
\begin{center}
\includegraphics[width=5in]{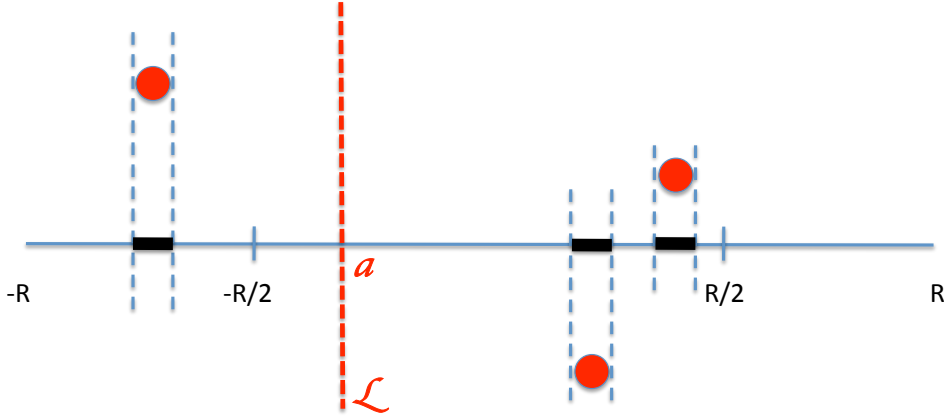}

\caption{Projecting the $\hat{\epsilon}$ tubes onto $[-R, R]$; the randomly chosen line $\mathcal{L}$ (i.e., $x=a$) does not intersect the (thick) projection segments (each of length $2 \hat{\epsilon}$) with probability at least $1 - \frac{2n \hat{\epsilon}}{R}$. The disks are centered at eigenvalues.} \label{tube}
\end{center}
\end{figure}


We would like now to estimate the number of iterations that the algorithm will need to converge, so we will use the results from \cite{baidemmelgu94}. To use them, we need to map the problem so that the splitting line becomes the unit circle. We will thus revisit the parameters that were essential in understanding the number of iterations in that case, and we will see how they change in the new context.

We start by recalling the notation $d_{(A,B)}$ for the ``distance to the nearest ill-posed problem'' (in our case, ``ill-posed'' means that there is an eigenvalue on the unit circle) for the matrix pencil $(A, B)$, defined as follows:
\[
d_{(A,B)} \equiv \inf\{||E||+||F||~: (A+E) - z(B+F) ~\mbox{is singular for some $z$ where $|z|=1$}\}~;
\]
according to Lemma 1, Section 5 from \cite{baidemmelgu94}, $d_{(A,B)} = \min_{\theta} \sigma_{\min} (A - e^{i\theta}B)$. 

We use now the estimate for the number $p$ of iterations given in Theorem 1 of Section 4 from \cite{baidemmelgu94}. Provided that the number of iterations, $p$, is larger than 
\begin{eqnarray} \label{p-eq}
p \geq \log_2 \frac{||(A, B)|| - d_{(A,B)}}{d_{(A,B)}}~,
\end{eqnarray}
the relative distance between $(A_p+B_p)^{-1} A_p$ and the projector $P_{R, |z|>1}$ may be bounded from above, as follows:
\[
err_p := \frac{||(A_p+B_p)^{-1} A_p - P_{R, |z|>1}||}{||P_{R, |z|>1}||} \leq \frac{2^{p+3} \left ( 1 - \frac{d_{(A,B)}}{||(A,B)||} \right )^{2^p}}{\max \left ( 0, 1 - 2^{p+2} \left (1 - \frac{d_{(A,B)}}{||(A,B)||} \right )^{2^p} \right )}~.
\]
This implies that (for $p$ satisfying \eqref{p-eq}) convergence is quadratic and the rate depends on $d_{(A,B)}/||(A,B)||$, the size of the smallest perturbation that puts an eigenvalue of $(A, B)$ on the unit circle. 

The new context, however, is that of lines instead of circles; so, if we start with $(A,I)$ and we choose a random line (w.l.o.g. assume that the line is $Re(z) = a$) that we then map to the unit circle using the map $f(z) = \frac{z-a+1}{z-a-1}$, the pencil $(A, I)$ gets mapped to $(A-(a-1)I, A-(a+1)I)$, and the distance to the closest ill-defined problem for the latter pencil is $d_{(A,I)} = \min_{\theta} \sigma_{\min} ((A- (a-1)I) - e^{i \theta} (A-(a+1)I))$.

Let now $$d_{\mathcal{L}}(A,B) \equiv \inf\{||E||+||F||~: (A+E) - z(B+F) ~\mbox{is singular for some $z$ where $Re(z) = a$}\}$$ be the size of the smallest perturbation that puts an eigenvalue of $(A,B)$ on the line $\mathcal{L}$.

It is a quick computation to show that 
\begin{eqnarray*}
d_{\mathcal{L}}(A,B) & = & \min_{\theta} 2  \mid \sin(\frac{\theta}{2}) \mid \sigma_{\min} (A-aB + \frac{e^{i \theta}+1}{1-e^{i\theta}}B)
\end{eqnarray*}
and from here, redefining $\theta$ as $\theta/2$, we obtain that
\begin{eqnarray} \label{d1}
d_{\mathcal{L}}(A,B) = 2 \min_{\theta \in [0, \pi]} \sigma_{\min} \Big ( \sin(\theta) (A-aB) + i \cos(\theta)B \Big)~.
\end{eqnarray}

Let us now define $n_a(A,B) = ||(A-aB+B, A-aB-B)||$.

Combining equations \eqref{p-eq} and \eqref{d1} for the case of a pencil $(A, I)$, we obtain that, if $p$, the number of iterations, is larger than $\log_2(n_a(A,I) /d_{\mathcal{L}}(A,I)-1)$, then the relative distance between the computed projector and the actual one decreases quadratically. This means that, if we want a relative error $err_p \leq \epsilon$, we will need $\log_2(n_a(A,I) /d_{\mathcal{L}}(A,I)-1) + \log_2 \log_2 \left ( \frac{1}{\epsilon} \right)$ iterations. 

How do $d_{\mathcal{L}}$ and $d_g$ relate? In general, the dependence is complicated (see \cite{trefethenreichel}), but a simple relationship is given by the following result by Bauer and Fike, which can be found for example as Theorem 2.3 from \cite{TE05}: $d_{\mathcal{L}}(A, I) \geq \frac{d_g(A, I)}{\kappa(S)}$, where we recall that $S$ is the eigenvector matrix. We obtain the following lemma.

\begin{lemma} \label{d2}
Provided that $\hat{\epsilon}$ is suitably small, then with probability at least $1 - \frac{2 n \hat{\epsilon}}{R}$, the number of iterations to make $err_p \leq \epsilon$ is at most
$$ p = \log_2(n_a(A,I) \kappa(S) / \hat{\epsilon} - 1) + \log_2 \log_2 \left ( \frac{1}{\epsilon} \right )~.$$
\end{lemma}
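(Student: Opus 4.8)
The plan is to simply assemble the three ingredients that have already been put in place. First I would recall that, from the discussion preceding equation \eqref{p-eq} and the bound on $err_p$ quoted from Theorem 1, Section 4 of \cite{baidemmelgu94}, convergence with $err_p \leq \epsilon$ is guaranteed once the iteration count $p$ satisfies
\[
p \geq \log_2\!\left(\frac{n_a(A,I)}{d_{\mathcal{L}}(A,I)} - 1\right) + \log_2\log_2\!\left(\frac{1}{\epsilon}\right)~,
\]
this being the form the estimate \eqref{p-eq} takes after the change of variables $f(z) = \frac{z-a+1}{z-a-1}$ mapping the line $\mathcal{L}$ to the unit circle, together with the definition $n_a(A,B) = ||(A-aB+B,\,A-aB-B)||$ and the formula \eqref{d1} for $d_{\mathcal{L}}$. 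The $\log_2\log_2(1/\epsilon)$ term is exactly the number of extra squaring steps needed to drive the (already quadratically converging) error below $\epsilon$.

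Next I would invoke the Bauer--Fike-type inequality $d_{\mathcal{L}}(A,I) \geq d_g(A,I)/\kappa(S)$ stated just before the lemma (Theorem 2.3 of \cite{TE05}), where $d_g$ is the geometric distance from $\mathcal{L}$ to the spectrum and $S$ the eigenvector matrix. Substituting this lower bound into the denominator above only increases the right-hand side, so it suffices to take
\[
p = \log_2\!\left(\frac{n_a(A,I)\,\kappa(S)}{d_g(A,I)} - 1\right) + \log_2\log_2\!\left(\frac{1}{\epsilon}\right)~.
\]
Finally I would apply Lemma \ref{dist}: with probability at least $1 - \frac{2n\hat{\epsilon}}{R}$ the randomly chosen line satisfies $d_g(A,I) \geq \hat{\epsilon}$, and on this event replacing $d_g(A,I)$ by $\hat{\epsilon}$ in the denominator again only increases the bound, giving the claimed $p = \log_2(n_a(A,I)\kappa(S)/\hat{\epsilon} - 1) + \log_2\log_2(1/\epsilon)$. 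The phrase ``provided $\hat{\epsilon}$ is suitably small'' is what guarantees the argument of the first logarithm is positive (equivalently $n_a(A,I)\kappa(S) > \hat{\epsilon}$, which also makes the probability bound nonvacuous).

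The only real subtlety — and the step I would be most careful about — is the passage from the circle estimate of \cite{baidemmelgu94} to the line estimate: one must check that under the Möbius map $f$ the quantity $||(A,B)|| - d_{(A,B)}$ over $d_{(A,B)}$ appearing in \eqref{p-eq} transforms into $n_a(A,I)/d_{\mathcal{L}}(A,I) - 1$, i.e. that $n_a$ plays the role of the norm of the transformed pencil and that \eqref{d1} correctly captures the transformed distance to ill-posedness. Since \eqref{d1} and the definition of $n_a$ were derived immediately above the lemma, this reduces to a bookkeeping check rather than new work; everything else is monotonic substitution of one lower bound into another.
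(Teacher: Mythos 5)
Your proposal is correct and follows the paper's own route exactly: the paper obtains the lemma by combining the iteration estimate \eqref{p-eq} (transported from the unit circle to the line $\mathcal{L}$ via the M\"obius map, yielding the $n_a/d_{\mathcal{L}}$ form), the Bauer--Fike bound $d_{\mathcal{L}}(A,I) \geq d_g(A,I)/\kappa(S)$, and Lemma~\ref{dist} for the probabilistic lower bound $d_g \geq \hat{\epsilon}$. Your remark that the only nontrivial step is the bookkeeping of the conformal change of variables is also consistent with the paper, which carries out that computation in the paragraphs immediately preceding the lemma.
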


Finally, we have to consider $\hat{\epsilon}$. In practice, every non-trivial stable dense matrix algorithm has a backward error bounded by $\epsilon_m f(n) ||A||$, where $f(n)$ is a polynomial of small degree and $\epsilon_m$ is the machine precision. This is the minimum sort of distance that we can expect to be from the pseudospectrum. Thus, we must consider $\hat{\epsilon} = c \cdot \epsilon_{m} \cdot f(n) \cdot \kappa(S) ||A||$.

\begin{remark} Naturally, this makes sense only if $\hat{\epsilon}< 1$. \end{remark}

Recall that here $R$ is the bound on the spectral radius (perhaps obtained via Gershgorin bounds). In practice $R = O(||A||)$; assuming that we have prescaled $A$ so that $||A|| = O(1)$, it follows that $a$, as well as $n_a(A,I)$ are of the same order. The only question may be, what happens if we ``make progress'' without actually splitting eigenvalues, e.g., what if $||A||$ is much smaller than $R$ at first? The answer is that we decrease $R$ quickly, and here is a 2-step way to see it.

Suppose that we have localized all eigenvalues in the circular segment made by a chord $\mathcal{C}$ at distance at most $R/2$ from the center (see Figure \ref{2step} below). We will consider the worst case, that is, when we split no eigenvalues, but ``chop off'' the smallest area. We will proceed as follows: \begin{enumerate}
\item The next random line $\mathcal{L}$ will be picked to be perpendicular on $\mathcal{C}$, at distance at most $R/2$ from the center. All bounds on probabilities and number of iterations computed above will still apply.
\item If, at the next step, we once again split off a large part of the circle with no eigenvalues, the remaining area fits into a circle of radius at most $\sin(5 \pi/12)R$; if we wish to simplify the divide-and-conquer procedure, we can consider the slightly larger circle, recenter the complex plane, and continue. Note that at the most, the new radius is only a fraction of $\sin(5\pi/12) \approx .965$ of the old one.
\item If, at the second step, we split the matrix, we can simply examine the Gershgorin bounds for the new matrices and continue the recursion. 
\end{enumerate}

\begin{figure}[ht!]
\begin{center}
\includegraphics[height=4in]{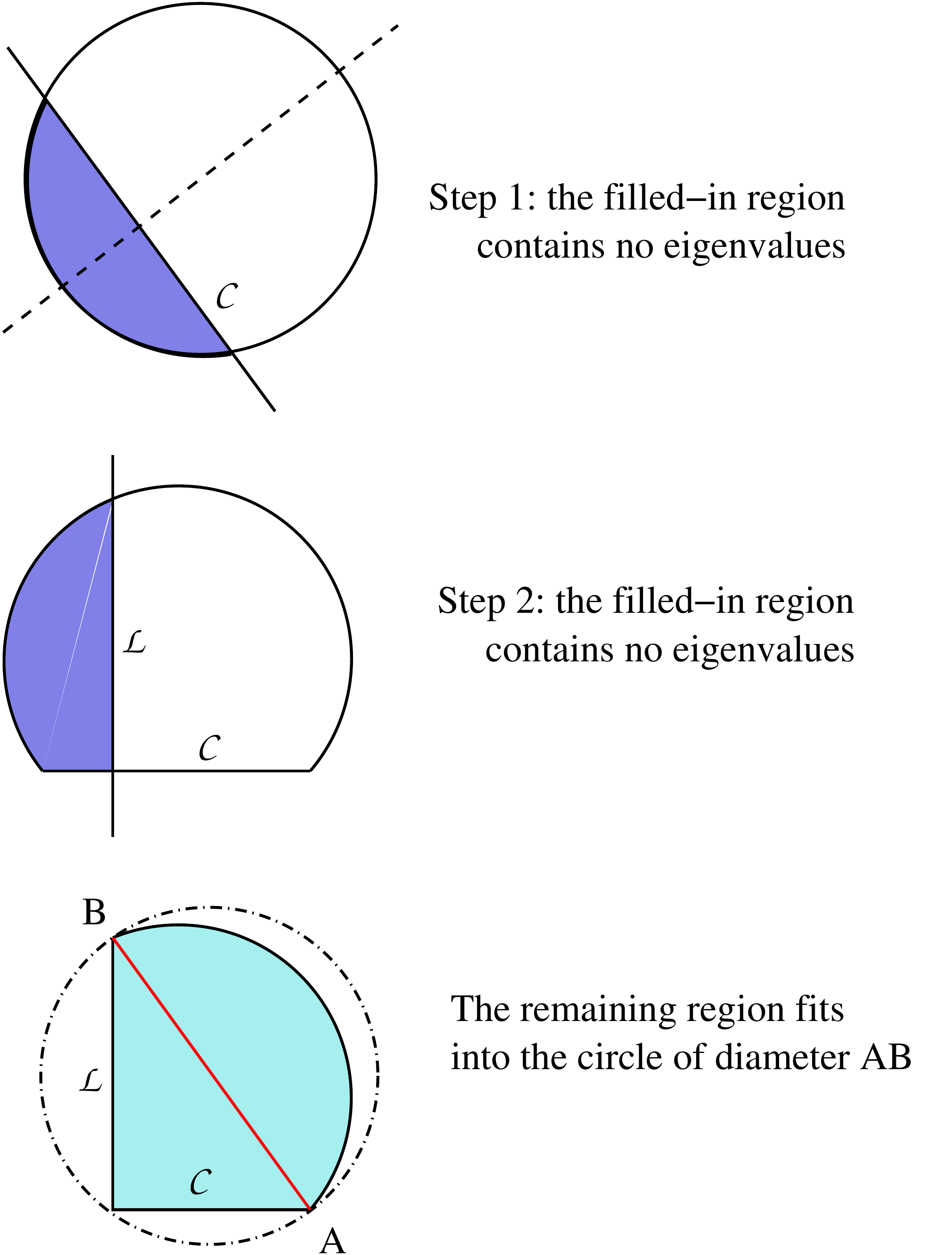}
\caption{The 2-step strategy reduces the radius of the bounding box.} \label{2step}
\end{center}
\end{figure}

Thus, we are justified in assuming that $R = O(||A||)$; to sum up, we have the following corollary:

\begin{corollary} \label{d3}
With probability at least $1 - c \cdot \epsilon \cdot n \cdot f(n) \cdot \kappa(S)$, the number of iterations needed for progress at every step is at most $ \log_2 \frac{1}{c \epsilon f(n)} + \log_2 \log_2 \frac{1}{\epsilon}$.
\end{corollary}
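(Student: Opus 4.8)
The plan is to assemble Corollary~\ref{d3} by substituting into Lemma~\ref{d2} the specific choice of $\hat\epsilon$ dictated by machine precision, together with the scaling assumptions $\|A\| = O(1)$ and $R = O(\|A\|) = O(1)$ justified by the preceding 2-step argument. First I would set $\hat\epsilon = c\cdot\epsilon_m\cdot f(n)\cdot\kappa(S)\,\|A\|$ as motivated in the text (renaming $\epsilon_m$ to $\epsilon$ for the statement, since this is the relevant precision parameter), and note that under prescaling $\|A\| = O(1)$ this is $\hat\epsilon = \Theta(\epsilon f(n)\kappa(S))$. Plugging this into the probability bound $1 - \tfrac{2n\hat\epsilon}{R}$ from Lemma~\ref{d2} and using $R = \Theta(1)$ gives a failure probability $O(n\hat\epsilon) = O(\epsilon\, n\, f(n)\,\kappa(S))$, which matches the claimed $1 - c\cdot\epsilon\cdot n\cdot f(n)\cdot\kappa(S)$ after absorbing constants.

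Next I would handle the iteration count. Lemma~\ref{d2} gives $p = \log_2\!\big(n_a(A,I)\kappa(S)/\hat\epsilon - 1\big) + \log_2\log_2(1/\epsilon)$. The key simplification is that $n_a(A,I) = \|(A - aI + I,\, A - aI - I)\|$ is $O(1)$ under the scaling $\|A\| = O(1)$, $|a| \le R/2 = O(1)$ (this is where I would cite the text's observation that "$a$, as well as $n_a(A,I)$ are of the same order" as $\|A\|$). Then $n_a(A,I)\kappa(S)/\hat\epsilon = O(1)\cdot\kappa(S) / (c\,\epsilon_m f(n)\kappa(S)\,\|A\|) = O\!\big(1/(c\,\epsilon f(n))\big)$, the $\kappa(S)$ factors canceling. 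Taking $\log_2$ gives the first term $\log_2\frac{1}{c\epsilon f(n)}$ (up to an additive $O(1)$ absorbed into the constant $c$), and the second term $\log_2\log_2\frac1\epsilon$ carries over unchanged.

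The remaining point is to argue that the scaling hypotheses actually hold "at every step" of the recursion, which is the phrase in the corollary that does real work beyond a one-line substitution. Here I would invoke the 2-step argument already laid out before the corollary: if a split occurs we recompute Gershgorin bounds for the smaller matrices and the scaling is re-established; if no split occurs but "progress" is made by chopping off an eigenvalue-free region, then after at most two such steps the bounding radius shrinks by a factor of at most $\sin(5\pi/12) < 1$, so $R$ stays $\Theta(\|A\|)$ and we may recenter and rescale. Thus the bounds of Lemma~\ref{d2}, specialized as above, apply uniformly across the recursion, giving the stated per-step guarantee.

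I expect the main obstacle — or at least the only non-mechanical part — to be making the "every step" claim fully rigorous: one must check that recentering and rescaling do not degrade $\kappa(S)$ (shifts $A \mapsto A - cI$ leave the eigenvector matrix $S$ unchanged, and uniform scaling $A \mapsto A/r$ also leaves $S$ unchanged, so $\kappa(S)$ is in fact invariant under exactly the operations used), and that the probability bounds compose correctly over the (logarithmically many) recursive levels. Everything else is substitution of the explicit $\hat\epsilon$ into Lemma~\ref{d2} and absorption of $O(1)$ factors into the constant $c$.
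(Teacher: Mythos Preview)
Your proposal is correct and follows exactly the paper's intended derivation: the paper presents Corollary~\ref{d3} without a separate proof, simply as the summary (``to sum up'') of the preceding discussion --- namely, substituting $\hat\epsilon = c\,\epsilon\,f(n)\,\kappa(S)\,\|A\|$ into Lemma~\ref{d2}, using the prescaling $\|A\|=O(1)$ and the 2-step argument that $R=O(\|A\|)$. You have made explicit the $\kappa(S)$ cancellation in the iteration count and the invariance of $\kappa(S)$ under the recentering/rescaling operations, which the paper leaves implicit; these are the right details to fill in.
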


\subsubsection{Numerical experiments}
\label{sec:NumExp}

Numerical experiments for Algorithm~\ref{rnep} are given in Figures~\ref{fig:numexp_rand} and \ref{fig:numexp_hard}.  We chose four representative examples and present for each the distribution of the eigenvalues in the complex plane and the convergence plot of the backward error after a given number of iterations of implicit repeated squaring.  This backward error is $\frac{\|E_{21}\|_2}{\|A\|_2}$ in the notation given in Algorithm~\ref{rnep}, where the dimension of $E_{21}$ is chosen to minimize this quantity.  In practice, a different test of convergence can be used for the implicit repeated squaring, but for demonstrative purposes, we computed the invariant subspace after each iteration of repeated squaring in order to show the convergence of the backward error of an entire step of divide-and-conquer.

The examples presented in Figure~\ref{fig:numexp_rand} are normal matrices generated by choosing random eigenvalues (where the real and imaginary parts are each chosen uniformly from $[-1.5, 1.5]$) and applying a random orthogonal similarity transformation. The only difference is the matrix size.  We point out that the convergence rate depends not on the size of the matrix but rather the distance between the splitting line (the imaginary axis) and the closest eigenvalue (as explained in the previous section).  This geometric distance is the same as the distance to the nearest ill-posed problem because in this case, the condition number of the (orthogonal) diagonalizing similarity matrix is $1$.  The distance between the imaginary axis and the nearest eigenvalue is $8.36 \cdot 10^{-3}$ in Figure~\ref{subfig:numexp_rand100} ($n=100$) and $1.04 \cdot 10^{-3}$ in Figure~\ref{subfig:numexp_rand1000} ($n=1000$); note that convergence occurs at about the same iteration.

The examples presented in Figure~\ref{fig:numexp_hard} are meant to test the boundaries of convergence of Algorithm~\ref{rnep}.  While the eigenvalues in these examples are artificially chosen to be close to the splitting line (imaginary axis), our randomized higher level strategy for choosing splitting lines will ensure that such cases will be highly unlikely.  Figure~\ref{subfig:numexp_imagsplit25} presents an example where the eigenvalues are chosen with random imaginary part (between $-1.5$ and $1.5$) and real part set to $\pm 10^{-10}$ and the eigenvectors form a random orthogonal matrix. Because all the eigenvalues lie so close to the splitting line, convergence is much slower than in the examples with random eigenvalues.  Note that because the imaginary axis does not intersect the $\epsilon$-pseudospectrum (where $\epsilon$ is machine precision), convergence is still achieved within $40$ iterations.

The example presented in Figure~\ref{subfig:numexp_jordan32} is a 32-by-32 non-normal matrix which has half of its eigenvalues chosen randomly with negative real part and the other half set to $0.1$, forming a single Jordan block.  In this example, the imaginary axis intersects the $\epsilon$-pseudospectrum and so Algorithm~\ref{rnep} does not converge. Note that the eigenvalue distribution in the plot on the left shows $16$ eigenvalues forming a circle of radius $0.1$ centered at $0.1$.  The eigenvalue plots are the result of using standard algorithms which return $16$ distinct eigenvalues within the pseudospectral component containing $0.1$.

\begin{figure}[ht!]
\centering
\subfigure[$100\times100$ matrix with random eigenvalues]{
\includegraphics[scale=.7]{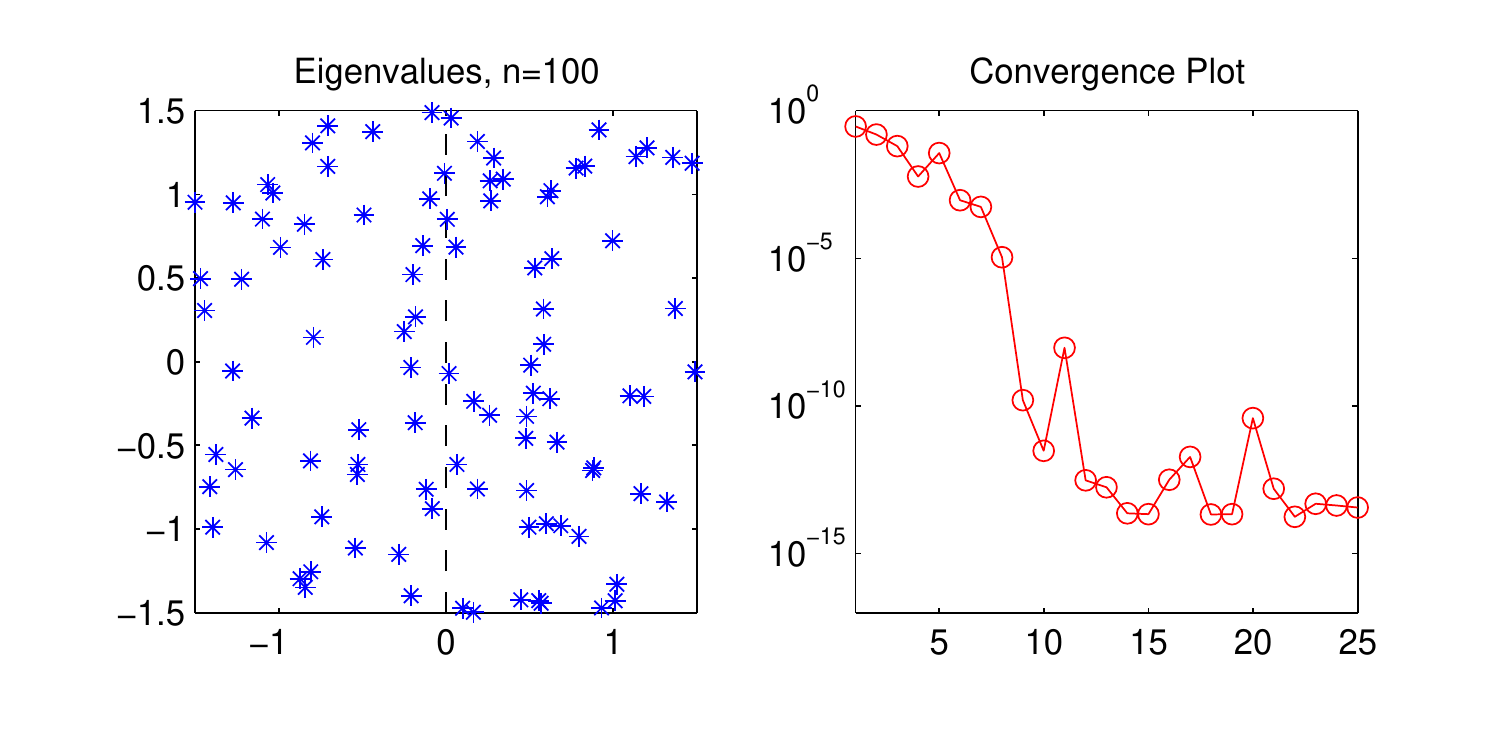}
\label{subfig:numexp_rand100}
}
\subfigure[$1000\times1000$ matrix with random eigenvalues]{
\includegraphics[scale=.7]{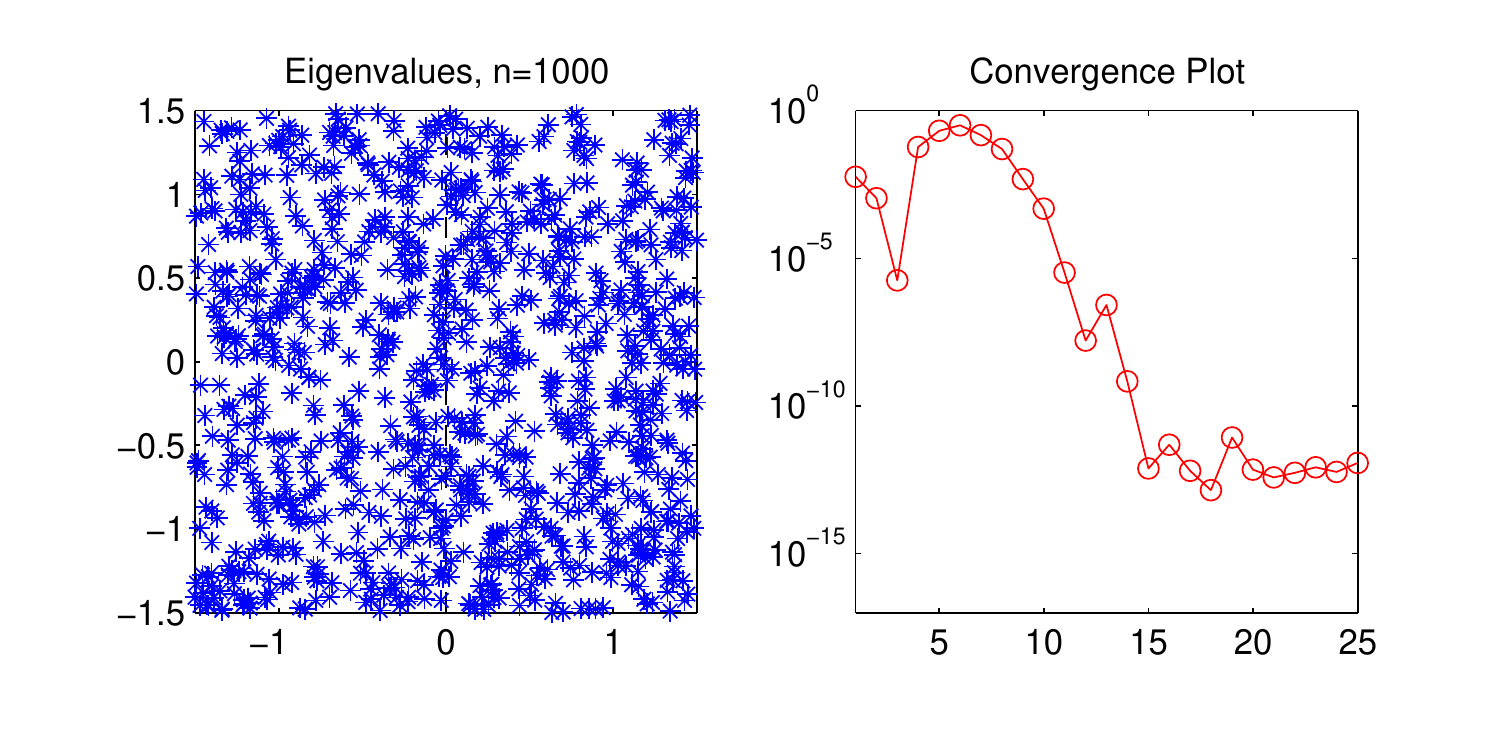}
\label{subfig:numexp_rand1000}
}
\caption{Numerical experiments using normal matrices for Algorithm~\textbf{RNEP}.  On the left of each subfigure
is a plot of the eigenvalues in the complex plane (as computed by standard algorithms)
with the imaginary axis (where the split occurs) highlighted.  In each case, the
eigenvalues have real and imaginary parts chosen uniformly from $[-1.5,1.5]$.  On the
right is a convergence plot of the backward error as described in
Section~\ref{sec:NumExp}.}
\label{fig:numexp_rand}
\end{figure}

\begin{figure}[ht!]
\centering
\subfigure[$25\times25$  matrix with eigenvalues whose real parts are $\pm 10^{-10}$]{
\includegraphics[scale=.7]{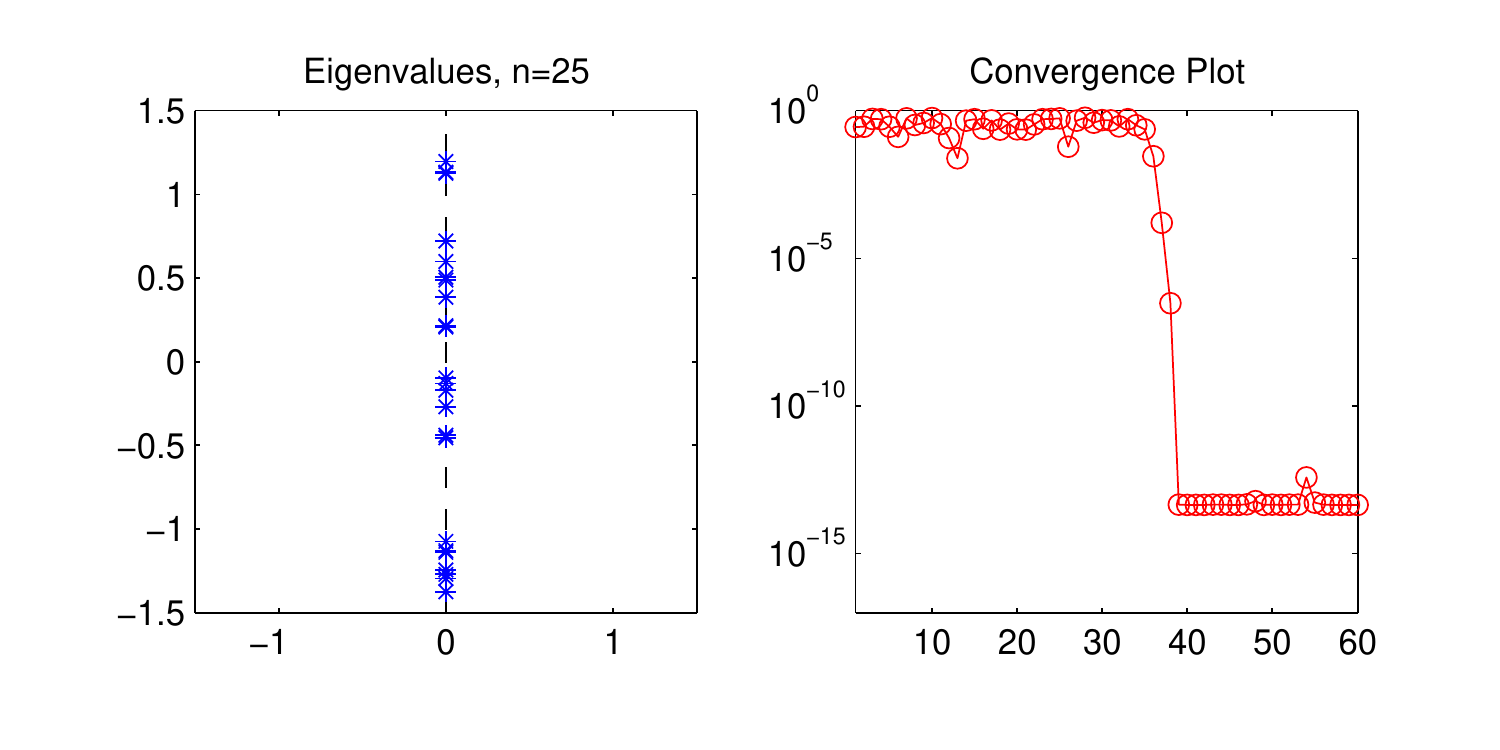}
\label{subfig:numexp_imagsplit25}
}
\subfigure[$32\times32$ matrix with half the eigenvalues forming Jordan block at
$0.1$]{
\includegraphics[scale=.7]{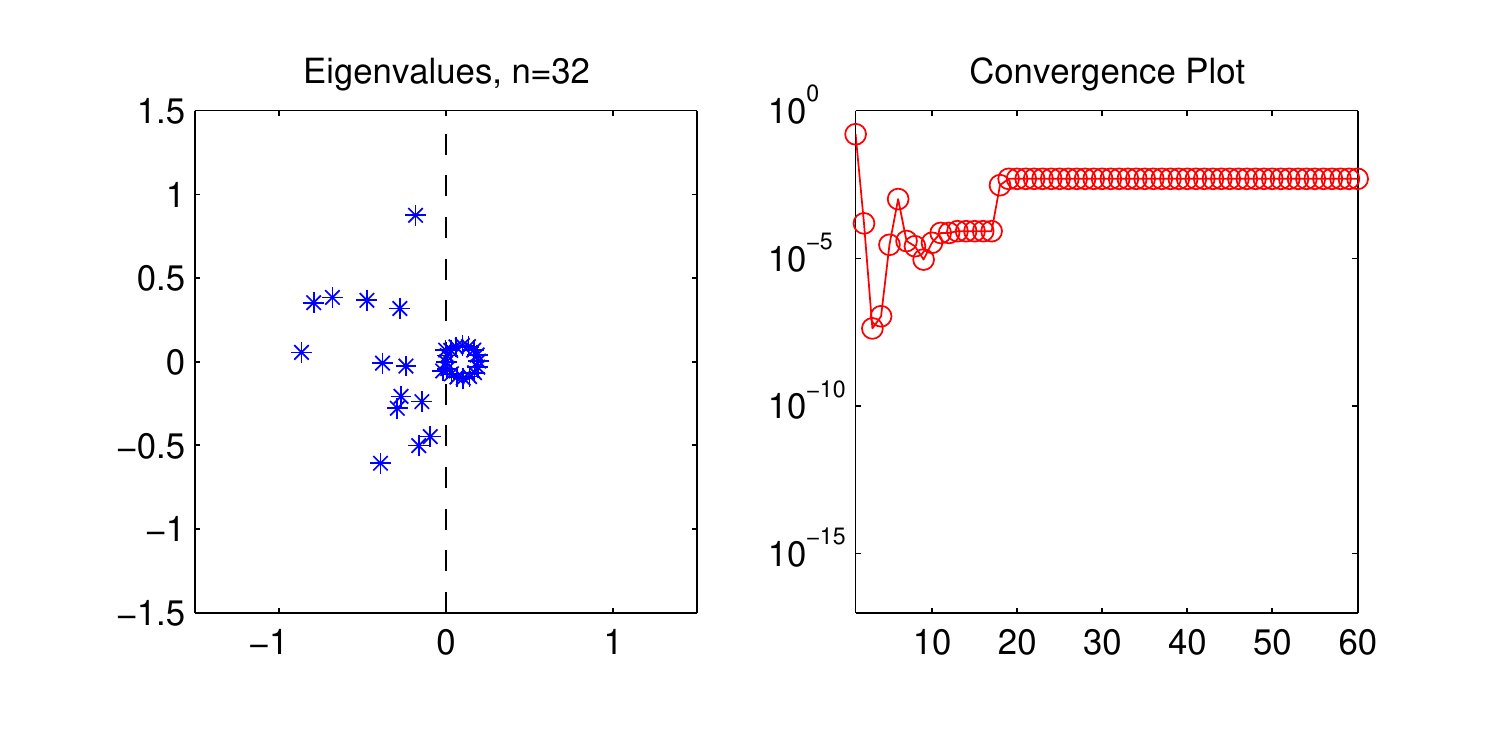}
\label{subfig:numexp_jordan32}
}
\caption{Numerical experiments for Algorithm \textbf{RNEP}.  On the left of each subfigure
is a plot of the eigenvalues in the complex plane (as computed by standard algorithms)
with the imaginary axis (where the split occurs) highlighted.  Note in
\ref{subfig:numexp_imagsplit25}, all eigenvalues are at a distance of $10^{-10}$ from
the imaginary axis.  In \ref{subfig:numexp_jordan32}, the imaginary axis intersects the
$\epsilon$-pseudospectrum (where $\epsilon$ is machine precision).  On the right is a
convergence plot of the backward error as described in Section~\ref{sec:NumExp} for up
to $60$ iterations.}
\label{fig:numexp_hard}
\end{figure}

\subsection{High level strategy for divide-and-conquer, single symmetric matrix (or SVD) case}

In this case, the eigenvalues are on the real line. After obtaining upper and lower bounds for the spectrum (e.g., Gershgorin intervals),  we use vertical lines to split it. In order to try and split a large piece of the spectrum at a time, we will choose the lines to be close to the centers of the obtained intervals. 

From Lemma 3 of \cite{baidemmelgu94} we can see that the basic splitting algorithm will converge 
if the splitting line does not intersect some $\hat{\epsilon}$-pseudospectrum of the matrix, 
which consists of a union of at most $n$ $2\hat{\epsilon}$-length intervals. 
Note that here we only consider symmetric perturbations to the matrix, 
and thus the pseudospectrum is a union of intervals.

To maximize the chances of a good split we will randomize the choice of splitting line, as follows: for an interval \emph{I}, which can without loss of generality be taken as $[-R,R]$, we will choose a point $x$ uniformly between $[-R/2, R/2]$ and use the vertical line passing through $x$ for our split. See Figure \ref{prog_symm} below.

\begin{figure}[ht!]
\begin{center}


\includegraphics[width=6in]{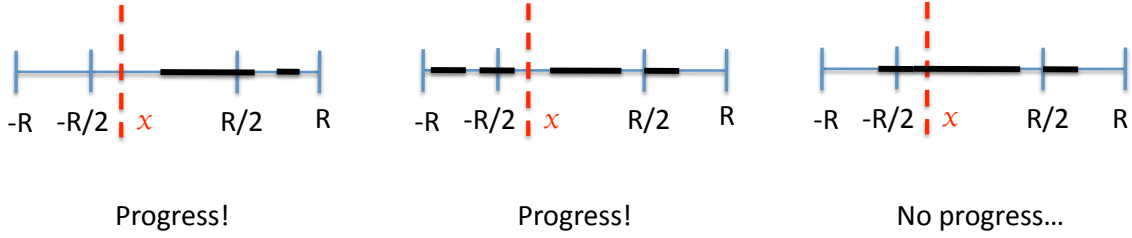}


\caption{The possible outcomes of splitting the segment $[-R, R]$. The thick dark segments represent pseudospectral components (unions of overlapping smaller segments), while the dashed line represents the random choice of split.} \label{prog_symm}
\end{center}
\end{figure}

As before, we measure progress in two ways: one is when a successful split occurs, i.e.,we separate some of the eigenvalues, and the second one is when no split occurs, but a large piece of the interval is determined not to contain any eigenvalues. Thus, progress can fail to occur in this context \emph{only} if our choice of $x$ falls inside one (or more) of the small intervals representing the $\hat{\epsilon}$-pseudospectrum. The total length of these intervals is at most $2 \hat{\epsilon} n$. Therefore, it is a simple calculation to show that the probability that a randomly chosen line in $[-R/2, R/2]$ will intersect the $\hat{\epsilon}$-pseudospectrum is larger than $ 1- 2n \hat{\epsilon}/R$. 

As before, the right choice for $\hat{\epsilon}$ must be proportional to 
$c f(n) ||A||$, with $f(n)$ a polynomial of low degree; 
also as before, $R$ will be $O(||A||)$; putting these two together with 
Lemma \ref{d2}, we get the following. 

\begin{corollary} \label{d4} 
With probability at least $1 - c \cdot \epsilon \cdot n \cdot f(n)$, the number of iterations is at most $ \log_2 \frac{1}{c \epsilon f(n)} + \log_2 \log_2 \frac{1}{\epsilon}$.
\end{corollary}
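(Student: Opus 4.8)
The plan is to specialize the nonsymmetric analysis — Lemma~\ref{dist}, Lemma~\ref{d2}, and the surrounding discussion that produced Corollary~\ref{d3} — to the case of a real symmetric $A$, where the geometry collapses from the plane to the line and two quantities become trivial.

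First I would record the simplifications the symmetric structure buys. Since $A$ is symmetric its eigenvector matrix $S$ may be taken orthogonal, so $\kappa(S) = 1$; the Bauer--Fike step inside Lemma~\ref{d2} then degenerates and $d_{\mathcal{L}}(A,I)$ agrees (up to constants) with the geometric distance $d_g$ from the splitting line to the spectrum. Restricting to symmetric perturbations, the relevant $\hat\epsilon$-pseudospectrum is exactly a union of at most $n$ intervals of length $2\hat\epsilon$, so the "tube" count of Lemma~\ref{dist} carries over verbatim to the segment $[-R/2,R/2]$: a uniformly random splitting line misses this pseudospectrum with probability at least $1 - 2n\hat\epsilon/R$, and whenever it does, a step of divide-and-conquer makes progress (either separating some eigenvalues or certifying a large eigenvalue-free sub-interval). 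Then I would substitute the calibrated values exactly as before: the finest resolvable distance for a backward-stable algorithm is $\hat\epsilon = c\,\epsilon\,f(n)\,\|A\|$ with $f$ of low degree and $\epsilon$ the effective machine precision, and — after prescaling so that $\|A\| = O(1)$, with the one-dimensional analogue of the $2$-step radius-reduction argument (here only easier, since a miss confines all eigenvalues to one side of the line, and after a successful split one re-applies Gershgorin to the deflated blocks) — we may take $R = O(\|A\|)$ and hence $n_a(A,I) = O(1)$. Feeding $\kappa(S)=1$, this $\hat\epsilon$, and these bounds for $R$ and $n_a$ into Lemma~\ref{d2}, the failure probability $2n\hat\epsilon/R$ becomes $O(c\,\epsilon\,n\,f(n))$ and the iteration bound $\log_2\!\big(n_a(A,I)\kappa(S)/\hat\epsilon - 1\big) + \log_2\log_2(1/\epsilon)$ becomes $\log_2\frac{1}{c\,\epsilon\,f(n)} + \log_2\log_2\frac{1}{\epsilon}$ once the $O(1)$ factors are absorbed into $c$; a final adjustment of $c$ tidies the probability into the stated form.

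There is no substantive obstacle: the content lives entirely in the lemmas already established, and what remains is substitution and constant-chasing. The only point deserving an explicit sentence is the justification of "$R = O(\|A\|)$" throughout the recursion, which in the symmetric (interval) setting is more transparent than the figure-based $2$-step argument used in the nonsymmetric case, so I would simply remark on that and move on.
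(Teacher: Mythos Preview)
Your proposal is correct and follows essentially the same route as the paper: the paper's argument for Corollary~\ref{d4} is exactly to repeat the interval/probability count (the one-dimensional analogue of Lemma~\ref{dist}), take $\hat\epsilon$ proportional to $c\,f(n)\,\|A\|$ and $R=O(\|A\|)$ as before, and invoke Lemma~\ref{d2}. The only point you make explicit that the paper leaves implicit is that $\kappa(S)=1$ for symmetric $A$, which is precisely what drops the $\kappa(S)$ factor present in Corollary~\ref{d3}.
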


Finally, we note that a large cluster of eigenvalues actually helps convergence in the symmetric case, 
as the following lemma shows:

\begin{lemma} If $A = \bmat{cc} A_{11} & A_{21}^T \\ A_{21} & A_{22} \emat$ is hermitian, 
$\|A_{21}\|_2 \leq (\lambda_{\max}(A) - \lambda_{\min}(A))/2$.
\end{lemma}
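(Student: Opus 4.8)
The plan is to prove the statement via the Rayleigh--Ritz variational characterization of the extreme eigenvalues of the Hermitian matrix $A$. Partition vectors conformally with the block structure, writing $z = \bmat{c} u \\ v \emat$ with $u$ of length $\dim A_{11}$ and $v$ of length $\dim A_{22}$. The key algebraic identity is that for \emph{unit} vectors $u$ and $v$, the two (automatically unit) vectors $x = \tfrac{1}{\sqrt2}\bmat{c} u \\ v\emat$ and $y = \tfrac{1}{\sqrt2}\bmat{c} u \\ -v\emat$ satisfy $x^H A x - y^H A y = 2\,\mathrm{Re}\big(v^H A_{21} u\big)$: in forming the difference, the $A_{11}$ and $A_{22}$ contributions cancel (same sign in both quadratic forms), while the two off-diagonal contributions reinforce (opposite signs).

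Since $\|x\|_2 = \|y\|_2 = 1$, Rayleigh--Ritz gives $x^H A x \le \lambda_{\max}(A)$ and $y^H A y \ge \lambda_{\min}(A)$, hence $2\,\mathrm{Re}\big(v^H A_{21} u\big) \le \lambda_{\max}(A) - \lambda_{\min}(A)$ for \emph{every} pair of unit vectors $u,v$. Now choose $u$ and $v$ to be right and left singular vectors of $A_{21}$ realizing $\sigma_{\max}(A_{21}) = \|A_{21}\|_2$, after rescaling one of them by a unit-modulus scalar so that $v^H A_{21} u$ equals the real nonnegative number $\|A_{21}\|_2$. Substituting yields $2\|A_{21}\|_2 \le \lambda_{\max}(A) - \lambda_{\min}(A)$, which is the claim.

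As an alternative derivation and sanity check one can argue purely at the matrix level: set $c = (\lambda_{\max}(A)+\lambda_{\min}(A))/2$, so that $B := A - cI$ has the \emph{same} off-diagonal block $A_{21}$ and $\|B\|_2 = (\lambda_{\max}(A)-\lambda_{\min}(A))/2$. With the signature matrix $J = \diag(I,-I)$ partitioned conformally, $J$ is unitary and $B - JBJ = \bmat{cc} 0 & 2A_{21}^H \\ 2A_{21} & 0 \emat$, whose $2$-norm is $2\sigma_{\max}(A_{21}) = 2\|A_{21}\|_2$; the triangle inequality together with $\|JBJ\|_2 = \|B\|_2$ then gives $2\|A_{21}\|_2 = \|B - JBJ\|_2 \le 2\|B\|_2 = \lambda_{\max}(A)-\lambda_{\min}(A)$.

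There is no substantive obstacle here; the argument is short. The only points needing a little care are the phase/conjugation bookkeeping in the complex Hermitian case (the statement is written with $A_{21}^T$, the form relevant to the real symmetric matrices actually produced by \textbf{RSEP}, where $\mathrm{Re}$ is vacuous and $v^H A_{21} u = u^H A_{21}^T v$), and verifying the sign pattern in the cancellation identity, namely that the diagonal blocks enter $x^HAx - y^HAy$ with the same sign and the off-diagonal blocks with opposite signs. I would present the Rayleigh-quotient argument as the proof proper and omit or relegate the signature-matrix version to a parenthetical remark.
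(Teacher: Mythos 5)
Your main argument is correct and is essentially the paper's own proof: both choose unit vectors making the bilinear form $v^HA_{21}u$ equal to $\|A_{21}\|_2$, form the two unit vectors $2^{-1/2}(u,\pm v)$, and subtract the two Rayleigh quotients so the diagonal-block terms cancel and Courant--Fischer/Rayleigh--Ritz gives $\lambda_{\max}(A)-\lambda_{\min}(A)\ge 2\|A_{21}\|_2$. The signature-matrix variant is a nice extra, but the proof proper coincides with the paper's.
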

In other words if the eigenvalues are tightly clustered 
($\lambda_{\max}(A) - \lambda_{\min}(A)$ is tiny) then the 
matrix must be nearly diagonal.
\begin{proof}
Choose unit vectors $x$ and $y$ such that $y^T A_{21} x = \| A_{21} \|_2$,
and let $z_{\pm} = 2^{-1/2} \bmat{c} x \\ \pm y \emat$.
Then $z_{\pm}^T A z_{\pm} = .5 x^T A_{11} x + .5 y^T A_{22} y \pm \|A_{21}\|_2$.
So by the Courant-Fischer min-max theorem,
$\lambda_{\max}(A) - \lambda_{\min}(A) \geq z_+^T A z_+ - z_-^T A z_- = 2\|A_{21}\|_2$.
\end{proof}

\subsection{High level strategy for divide-and-conquer, general pencil case}

Now we consider the case of a general square pencil defined by $(A,B)$, and how to reduce it to (block) upper triangular form by dividing-and-conquering the spectrum in an analogous way to the previous sections.  Of course, in this generality the problem is not necessarily well-posed, for if the pencil is {\em singular}, i.e. the characteristic polynomial $\det(A - \lambda B) \equiv 0$ independent of $\lambda$, then {\em every} number in the extended complex plane can be considered an eigenvalue. Furthermore, arbitrarily small perturbations to $A$ and $B$ can make the pencil {\em regular} (nonsingular) with at least one (and sometimes all) eigenvalues located at any desired point(s) in the extended complex plane. (For example, suppose $A$ and $B$ are both strictly upper triangular, so we can change their diagonals to be arbitrarily tiny values with whatever ratios we like.)

The mathematically correct approach in this case is to compute the Kronecker Canonical Form (or rather the variant that only uses orthogonal transformations to maintain numerical stability \cite{demmelkagstrom93a,demmelkagstrom93b}),  a problem that is beyond the scope of this paper.

The challenge of singular pencils is shared by any algorithm for the  generalized eigenproblem. The conventional QZ iteration may converge, but tiny changes in the inputs (or in the level or direction of roundoff) will generally lead to utterly different answers. So ``successful convergence'' does not mean the answer can necessarily be used without further analysis. In contrast, our divide-and-conquer approach will likely simply fail to divide the spectrum when the pencil is close to a singular pencil. In the language of section~\ref{sec:HLS:Nonsymmetric}, for a singular pencil the distance to any splitting line or circle is zero, and the $\epsilon$-pseudospectrum includes the entire complex plane for any $\epsilon > 0$. When such repeated failures occur, the algorithm should simply stop and alert the user of the likelihood of near-singularity.

Fortunately, the set of singular pencils form an algebraic variety of high codimension \cite{demmeledelman95},  i.e. a set of measure zero, so it is unlikely that a randomly chosen pencil will be close to singular. Henceforth we assume that we are not near a singular pencil.

Even in the case of a regular pencil, we can still have eigenvalues that are arbitrarily large, or even equal to infinity, if $B$ is singular. So we cannot expect to find a single finite bounding circle holding all the eigenvalues, for example using Gershgorin disks \cite{Stewart75}, to which we could then apply the techniques of the last section, because  many or all of the eigenvalues could lie outside any finite circle. Indeed, a Gershgorin disk in \cite{Stewart75} has infinite radius as long as there is a common row of $A$ and $B$ that is not diagonally dominant.

In order to exploit the divide-and-conquer algorithm for the interior of circles developed in the last section, we proceed as follows: We begin by trying to divide the spectrum along the unit circle in the complex plane. If all the eigenvalues are inside, we proceed with circles of radius $2^{-2^k}$, i.e. radius $\frac{1}{2}$ and then repeatedly squaring, until we find a smallest such bounding  circle, or (in a finite number of steps) run out of exponent bits in the floating point format (which means all the eigenvalues are in a circle of smallest possible representable radius, and we are done).

Conversely, if all the eigenvalues are outside, we proceed with  circles of radius $2^{2^k}$, i.e. radius $2$ and repeatedly squaring, again until we either find a largest such bounding circle (all eigenvalues being outside), or we run out of exponent bits. At this point, we swap $A$ and $B$, taking the reciprocals of all the eigenvalues, so that they are inside a (small) circle, and proceed as in the last section. If the spectrum splits along a circle of radius $r$, with some inside (say those of $A_{11} - \lambda B_{11}$)  and some outside (those of $A_{22} - \lambda B_{22}$),  we instead compute eigenvalues of $B_{22} - \lambda A_{22}$ inside a circle of radius $\frac{1}{r}$.

\subsection{Related work}


As mentioned in the introduction, there have been many previous approaches to parallelizing eigenvalue/singular value computations, and here we compare and contrast the three closest such approaches to our own.  These three algorithms are the PRISM project algorithm \cite{bischofledermansuntsao}, the reduction of symmetric EIG to matrix multiplication by Yau and Lu \cite{luyau}, and the matrix sign function algorithm originally formulated in \cite{godunov86,bulgakov88,malyshev89,malyshev92,malyshev93}.

At the base of each of the approaches is the idea of computing, explicitly or implicitly, a matrix function $f(A)$  that maps the eigenvalues of the matrix (roughly) into $0$ and $1$, or $0$ and $\infty$.

\subsubsection{The Yau-Lu algorithm}

This algorithm essentially splits the spectrum in the symmetric case (as explained below), but instead of divide-and-conquer, which only aims at reducing the problem into two subproblems of comparable size, the Yau-Lu algorithm aims to break the problem into a large number of small subproblems, which can then be solved by classical methods. 

 The starting idea is put all the eigenvalues on the unit circle by computing $B = e^{iA}$; next, they use a Chebyshev polynomial $P_N(z)$ of high degree ($N \approx 8n$) to evaluate $u(\lambda) = P_N(e^{-i \lambda}B)v_0$ for $\lambda = \frac{i k \pi}{N}$ for  $k = 0, \ldots, N-1$. If  $\lambda$ is close to an eigenvalue, $u(\lambda)$ will be  close to the eigenvector ($P_N(z)$ has a peak at $z=1$, and is very close to $0$  everywhere except a small neighborhood of $z=1$ on the unit circle).  They use the FFT to compute $u(\lambda)$ simultaneously for all $\lambda$,  which makes things a lot faster ($O(n^3\log_2n)$ instead of $O(n^4)$). This works only under the strong assumption that by scaling the eigenvalues so as to be between $0$ and $2\pi$, the gaps between them are all of order about $O(1/n)$. 

Finally, they extract the most likely candidates for eigenvectors from among the $u(\lambda)$, split them into $p$ orthogonal clusters, add more vectors if necessary, construct an orthogonal basis $W$ whose elements span invariant subspaces of $A$, and thus obtain that $W^{T}AW$ decouples the spectrum, making it necessary and sufficient to solve $p$ small eigenvalue problems. 

The authors make an attempt to perform an error analysis as well as a complexity analysis; neither completely addresses the case of tightly clustered eigenvalues. Clustering is a well-known challenge for attaining orthogonality with reasonable complexity, as highlighted in \cite{PD06,DPV06}.

By contrast, in the symmetric case, for highly clustered eigenvalues, our strategy would produce small bounding intervals and output them together with the number of eigenvalues to be found in them, as explained in the previous section. 

\subsubsection{Divide-and-conquer: the PRISM (symmetric) and sign function (both symmetric and non-symmetric) approaches}

The algorithms \cite{bischofledermansuntsao,godunov86,bulgakov88,malyshev93} can be described as follows. Given a matrix $A$:
\begin{enumerate}
\item[Step 1.] Compute, explicitly or implicitly, a matrix function $f(A)$  
that maps the eigenvalues of the matrix (roughly) into $0$ and $1$, or $0$ and $\infty$, 
\item[Step 2.] Do a rank-revealing decomposition (e.g. \textbf{QR}) to find the two spaces 
corresponding to the eigenvalues mapped to $1$ and $0$, or $\infty$ and $0$, respectively;
\item[Step 3.] Block-upper-triangularize the matrix to split the problem into two or more smaller problems;
\item[Step 4.] Recur on the diagonal blocks until all eigenvalues are found.
\end{enumerate}

The strategy is thus to divide-and-conquer; issues of concern have to deal with the stability of the algorithm, the type and cost of the rank-revealing decomposition performed, the convergence of the first step, as well as the splitting strategy. We will try to address some of these issues below.

In the PRISM approach (for the symmetric eigenvalue, and hence by extension the SVD) \cite{bischofledermansuntsao}, the authors choose Beta functions (polynomials which roughly approximate the step function from $[0,1]$ into $\{0,1\}$, with the step taking place at $1/2$) with or without acceleration techniques for their iteration. No stability analysis is performed, although there is reason to believe that some sort of forward stability could be achieved; the rank-revealing decomposition used is \textbf{QR} with pivoting. As such, there is also reason to believe that the approach is communication-avoiding, and that optimality can be reached by employing a lower-bound-achieving \textbf{QR}. 

Since $f(A)$ is computed explicitly, one could potentially use a deterministic rank-revealing \textbf{QR} algorithm instead of our randomized one. There has been recent progress in developing such a \textbf{QR} algorithm that also minimizes communication \cite{BDGGX10}.

The other approach is to use the sign function algorithm introduced by \cite{Roberts}; this works for both the symmetric and the non-symmetric cases, but requires taking inverses, which leads to a potential loss of stability. This can, however, be compensated for by ad-hoc means, e.g., by choosing a slightly different location to split the spectrum. In either case, some assumptions must be made in order to argue stability. 
A more in-depth discussion of algorithms using this method can be found in Section 3 of \cite{baidemmelgu94}. 


\subsubsection{Our algorithm}

Following in the footsteps of \cite{baidemmelgu94} and \cite{DDH07}, we have used the basic algorithm of \cite{godunov86,bulgakov88,malyshev89,malyshev92,malyshev93}.  This algorithm was modified to be numerically stable and otherwise more practical (avoiding matrix exponentials) in \cite{baidemmelgu94}. 
It was then reduced to matrix multiplication and dense \textbf{QR} in \cite{DDH07} by introducing randomization in order to achieve the same arithmetic complexity as fast matrix multiplication (e.g. $O(n^{2.81})$ using Strassen's matrix multiplication). 

The algorithms we present here began as the algorithms in \cite{DDH07} and were modified to take advantage of the optimal communication-complexity $O(n^3)$ algorithms for matrix multiplication and dense \textbf{QR} decomposition (the latter of which was given in \cite{DGHL08}).

We also make here several contributions to the numerical analysis of the methods themselves. First, they depend on a rank-revealing generalized QR decomposition of the product of two matrices $A^{-1}B$,  which is numerically stable and works with high probability. By contrast, this is done with pivoting in \cite{baidemmelgu94}, and the pivot order depends only on $B$, whereas the correct choice obviously depends on $A$ as well. We know of no way to stably compute a rank-revealing generalized QR decomposition of expressions like $A^{-1}B$ without randomization. More generally, we show how to compute a randomized rank revealing QR decomposition of an arbitrary product of matrices (and/or inverses); some of the ideas were used in a deterministic context in G.W. Stewart's paper \cite{Stewart95}, with application to graded matrices. 

Second, we elaborate the divide-and-conquer strategy in considerably more detail than previous publications; the symmetric/SVD and non-symmetric cases differ significantly. The symmetric/SVD case returns a list of eigenvalues/eigenvectors (singular values/vectors) as usual; in the nonsymmetric case, the ultimate output may not be a matrix in Schur form, but rather block Schur form along with a bound (most simply a convex hull) on the $\epsilon$-pseudospectrum of each diagonal block, along with an indication that further refinement of the pseudospectrum is impossible without higher precision. For example, a single Jordan block would be represented (roughly) by a circle centered at the eigenvalue. A conventional algorithm would of course return $n$ eigenvalues evenly spaced along the circular border of the pseudospectrum, with no indication (without further computation) that this is the case.  Both basic methods are oblivious to the genus of the pseudospectrum, as they will fail to point out ``holes'' like the one present in the third picture of Figure \ref{pr_nopr}. In certain cases (like convex or almost convex pseudospectra) the information returned by divide-and-conquer may be more useful than the conventional method.

We emphasize that if a subproblem is small enough to solve by the standard algorithm using minimal communication, a list of its eigenvalues will be returned. So we will only be forced to return bounds on pseudospectra when the dimension of the subproblem is very large, e.g. does not fit in cache in the sequential case.   

\section{Communication Bounds for Computing the Schur Form} \label{cba}

\subsection{Lower bounds}

We now provide two different approaches to proving communication 
lower bounds for computing the Schur decomposition.  Although 
neither approach yields a completely satisfactory algorithm-independent
proof of a lower bound, we discuss the merits and drawbacks as well as 
the obstacles associated with each approach.

The goal of the first approach is to show that computing the 
QR decomposition can be reduced to computing the Schur form 
of a matrix of nearly the same size.  Such a reduction in
principle provides a way to extend known lower bounds 
(communication or arithmetic) for QR decomposition to 
Schur decomposition, but has limitations described below.

We outline below a reduction of QR to a Schur decomposition 
followed by a triangular solve with multiple right hand sides (TRSM).  
There are two main obstacles to the success of this approach.  
First, the TRSM may have a nontrivial cost so that proving the 
cost of QR is at most the cost of computing Schur plus the cost 
of TRSM is not helpful.  Second, all known proofs of lower bounds for 
QR apply to a restricted set of algorithms which satisfy certain 
assumptions, and thus the reduction is only applicable to 
Schur decomposition and TRSM algorithms which also satisfy those 
assumptions.

The first obstacle can be overcome with additional assumptions which 
are made explicit below.  We do not see a way to overcome the 
second obstacle with the assumptions made in the current known 
lower bound proofs for QR decomposition, but future, more general 
proofs may be more amenable to the reduction.

\ignore{
Here we give a simple reduction that shows that the cost of computing the Schur form of a matrix is bounded below by the cost of computing the QR decomposition, under certain assumptions made clear below.
}

We now give the reduction of QR decomposition to Schur decomposition and TRSM.  Let $R$ be $m$-by-$m$ and upper triangular, $X$ be $n$-by-$m$ and dense, $A = \bmat{c} R \\ X \emat$ be $(m+n)$-by-$m$, and $B = \bmat{cc} R & 0^{m \times n} \\ X & 0^{n \times n} \emat$ be $(m+n)$-by-$(m+n)$. Let 
\[
A = \hat{Q} \cdot R_A = \bmat{cc} Q_{11} & Q_{12} \\ Q_{21} & Q_{22} \emat \cdot
\bmat{c} \hat{R} \\ 0^{n \times m} \emat
\]
where $\hat{R}$ and $Q_{11}$ are both $m$-by-$m$, and $Q_{22}$ is $n$-by-$n$. Note that $Q_{11}$ and $\hat{R}$ are both upper triangular. Then it is easy to confirm that the Schur decomposition of $B$ is given by
\[
 \hat{Q}^T \cdot B \cdot \hat{Q} = 
     \bmat{cc} \hat{R} \cdot Q_{11} & \hat{R} \cdot Q_{12}
   \\ 0^{n \times m} & 0^{n \times n} \emat \equiv
\bmat{cc} T_{11} & T_{12} \\ 0^{n \times m} & 0^{n \times n} \emat
\]

Since $T_{11}$ is the upper triangular product of $\hat{R}$ and $Q_{11}$, 
we can easily solve for $\hat{R} = T_{11} \cdot Q_{11}^{-1}$. 
This leads to our reduction algorithm \textbf{S2QR} of the QR decomposition of $A$ to 
Schur decomposition of $B$ outline below. Note that here \textbf{Schur} 
stands for any black-box algorithm for computing the respective decomposition. 
In addition, \textbf{TRSM} stands for ``triangular solve with multiple right-hand-sides'', 
which costs $O(m^3)$ flops, and for which communication-avoiding implementations are known \cite{BDHS10}.

\begin{algorithm}[ht!]
\protect\caption{Function $[\hat{Q}, \hat{R}] = $\textbf{S2QR}$([R; X])$, where $R$ is $n \times n$ upper triangular and $X$ is $m \times n$}
\label{qr2schur}
\begin{algorithmic}[1]
\STATE Form $B = \left [ \begin{array}{cc} R & 0 \\ X & 0 \end{array} \right ]$
\STATE Compute $[\hat{Q}, T] = $\textbf{Schur}$(B)$
\STATE $Q_{11} = \hat{Q}(1:m, 1:m)$; $T_{11} = T(1:m, 1:m)$
\STATE $\hat{R} = $\textbf{TRSM}$[Q_{11}, T_{11}]$
\RETURN $[\hat{Q}, \hat{R}]$
\end{algorithmic}
\end{algorithm}

This lets us conclude that
\[
{\rm Cost}((m+n)\times(m+n) \; \mbox{\textbf{Schur}}) \geq
{\rm Cost}((m+n)\times(m) \: \mbox{\textbf{S2QR}}) -
{\rm Cost}((m)\times(m) \: \mbox{\textbf{TRSM}}) 
\]

We assume now that we are using an ``$O(n^3)$ style'' algorithm, which means that the cost of \textbf{S2QR}
is $\Omega (m^2 n)$, say at least $c_1 m^2 n$ for some constant $c_1$. Similarly, the cost of \textbf{TRSM} is $O(m^3)$, say at most $c_2 m^3$. We choose $m$ to be a suitable fraction of $n$, say $m = \min(.5 c_1/c_2,1)n$, so that
\[
{\rm Cost}((2n)\times(2n) \; {\rm \textbf{Schur}}) \geq .5 c_1 m^2 n = \Omega(n^3)
\]
as desired.

We note that this proof can provide several kinds of lower bounds: \begin{itemize} \item[(1)] It provides a lower bound on arithmetic operations, by choosing $c_1$ to bound the number of arithmetic operations in \textbf{S2QR} from below \cite{DGHL08}, 
and $c_2 = 1/3$ to bound the number of arithmetic operations in \textbf{TRSM} from above \cite{BDHS10}.  
\item[(2)] It can potentially provide a lower bound on the number of words moved (or the number of messages) in the sequential case, by taking $c_1$ and $c_2$ 
both suitably proportional to $1/M^{1/2}$ (or to $1/M^{3/2}$). 
We note that the communication lower bound proof in \cite{BDHS10}  
does {\em not} immediately apply to this algorithm, 
because it assumed that the QR decomposition was computed 
(in the conventional backward-stable fashion) by repeated pre-multiplications 
of $A$ by orthogonal transformation, or by Gram-Schmidt, or by Cholesky-QR.
However, this does not invalidate the above reduction.
\item[(3)] It can similarly provide a lower bound for all communication in the parallel case, 
assuming that the arithmetic work of both \textbf{S2QR} and \textbf{TRSM} 
are load balanced, by taking $c_1$ and $c_2$ both proportional to $1/P$. 
\end{itemize}


The second approach to proving a communication lower bound for Schur decomposition 
is to apply the lower bound result for applying orthogonal transformations given by
Theorem~4.2 in \cite{BDHS10}.  We must assume that the algorithm for computing 
Schur form performs $O(n^3)$ flops which satisfy the assumptions of this result, 
to obtain the desired communication lower bound.  This approach is a 
straightforward application of a previous result, but the main drawback is the 
loss of generality.  Fortunately, this lower bound does apply to the 
spectral divide-and-conquer algorithm (by applying it just to the QR decompositions
performed) as well as the first phase of the successive band reduction approach 
discussed in Section~\ref{sec_SBR};
this is all we need for a lower bound on the entire algorithm.
But it does not necessarily apply to all future algorithms for Schur form.

The results of Theorem 4.2 in \cite{BDHS10} require that a sequence of 
Householder transformations are applied to a matrix (perhaps in a blocked fashion) 
where the Householder vectors are computed to annihilate entries of a 
(possibly different) matrix such that \emph{forward progress} is maintained,
that is, no entry which has been previously annihilated is filled in by a 
later transformation.  Therefore, this lower bound does not apply to 
``bulge-chasing,'' the second phase of successive band reduction,
where a banded matrix is reduced to tridiagonal form.
But it does apply to the reduction of a 
full symmetric matrix to banded form via QR decomposition and two-sided 
updates, the first phase of the
successive band reduction algorithms in Section~\ref{sec_SBR}.

Reducing a full matrix to banded form (of bandwidth $b$) requires 
$F=\frac43 n^3 + bn^2 -\frac83 b^2n + \frac13 b^3$ flops, 
so assuming forward progress, the number of words moved is at least 
$\Omega\lt(\frac{F}{\sqrt M}\rt)$, so for $b$ at most some fraction 
of $n$, we obtain a communication lower bound of $\Omega\lt(\frac{n^3}{\sqrt M}\rt)$. 

\subsection{Upper bounds: sequential case} 
\label{USeq}

In this section we analyze the complexity of the communication and computation of the algorithms described in Section~\ref{Algs} when performed on a sequential machine.  We define $M$ to be the size of the fast memory, $\alpha$ and $\beta$ as the message latency and inverse bandwidth between fast and slow memory, and $\gamma$ as the flop rate for the machine.  We assume the matrix is stored in contiguous blocks of optimal blocksize $b\times b$, where $b=\Theta(\sqrt M)$.  We will ignore lower order terms in this analysis, and we assume the original problem is too large to fit into fast memory (i.e., $an^2 > M$ for $a=1,2$ or $4$).
\subsubsection{Subroutines}
\label{USeqSub}

The usual blocked matrix multiplication algorithm for multiplying square matrices of size $n\times n$ has a total cost of $$C_{MM}(n) = \alpha \cdot O\lt(\frac{n^3}{M^{3/2}}\rt) + \beta \cdot O\lt( \frac{n^3}{\sqrt M}\rt) + \gamma \cdot O \lt( n^3\right).$$

From \cite{DGHL08}, the total cost of computing the QR decomposition using the so-called Communication-Avoiding QR algorithm (\textbf{CAQR}) of an $m\times n$ matrix ($m\geq n$) is $$C_{QR}(m,n) = \alpha \cdot O \lt( \frac{mn^2}{M^{3/2}}\rt) + \beta \cdot O\lt( \frac{mn^2}{\sqrt M}\rt) + \gamma \cdot O \lt(mn^2\rt)~.$$ This algorithm overwrites the upper half of the input matrix with the upper triangular factor $R$, but does not return the unitary factor $Q$ in matrix form (it is stored implicitly).  To see that we may compute $Q$ explicitly with only a constant factor more communication and work, we could perform \textbf{CAQR} on the following block matrix:
$$\left[\begin{matrix} A & I \\ 0 & 0 \end{matrix}\right] = \left[\begin{matrix} Q & 0 \\ 0 & I \end{matrix}\right] \left[\begin{matrix} R & Q^H \\ 0 & 0 \end{matrix}\right].$$

In this case, the block matrix is overwritten by an upper triangular matrix that contains $R$ and the conjugate-transpose of $Q$. For $A$ of size $n\times n$, this method incurs a cost of $C_{QR}(2n,2n) = O( C_{QR}(n,n) )$. (This is not the most practical way to compute $Q$, but is sufficient for a $O( \cdot )$ analysis.)  We note that an analogous algorithm (with identical cost) can be used to obtain \textbf{RQ}, \textbf{QL}, and \textbf{LQ} factorizations.

Computing the randomized rank-revealing QR decomposition (Algorithm~\ref{rurv}) consists of generating a random matrix, performing two \textbf{QR} factorizations (with explicit unitary factors), and performing one matrix multiplication.  Since the generation of a random matrix requires no more than $O(n^2)$ computation or communication, the total cost of the \textbf{RURV} algorithm is
\begin{eqnarray*}
C_{RURV}(n) &=& 2\cdot C_{QR} (n,n) + C_{MM}(n) \\
&=&  \alpha \cdot O \lt(\frac{n^3}{M^{3/2}}\rt) + \beta \cdot O \lt(\frac{n^3}{\sqrt M}\rt) + \gamma \cdot O \lt(n^3\rt).
\end{eqnarray*}
We note that this algorithm returns the unitary factor $U$ and random orthogonal factor $V$ in explicit form.  The analogous algorithm used to obtain the \textbf{RULV} decomposition has identical cost.

Performing implicit repeated squaring (Algorithm~\ref{rsq}) consists of two matrix additions followed by iterating a \textbf{QR} decomposition of a $2n\times n$ matrix followed by two $n\times n$ matrix multiplications until convergence.  Checking convergence requires computing norms of two matrices, constituting lower order terms of both computation and communication. Since the matrix additions also require lower order terms and assuming convergence after some constant $p$ iterations (depending on $\epsilon$, the accuracy desired, as per Corollaries~\ref{d3} and \ref{d4}), the total cost of the \textbf{IRS} algorithm is
\begin{eqnarray*}
C_{IRS}(n) &=& p \cdot \lt( C_{QR}(2n,n) + 2\cdot C_{MM}(n) \rt)\\
&=& \alpha \cdot O \lt(\frac{n^3}{M^{3/2}}\rt) + \beta \cdot O \lt(\frac{n^3}{\sqrt M}\rt) + \gamma \cdot O \lt(n^3\rt).
\end{eqnarray*}

Choosing the dimensions of the subblocks of a matrix in order to minimize the 1-norm of the lower left block (as in computing the $n-1$ possible values of $\|E_{21}\|_1/\|A\|_1+\|F_{21}\|_1/\|B\|_1$ in the last step of Algorithm~\ref{rgnep}) can be done with a blocked column-wise prefix sum followed by a blocked row-wise max reduction (the Frobenius and max-row-sum norms can be computed with similar two-phase reductions).  This requires $O(n^2)$ work, $O(n^2)$ bandwidth, and $O\lt(\frac{n^2}{M}\rt)$ latency, as well as $O(n^2)$ extra space.  Thus, the communication costs are lower order terms when $an^2 > M$.

 \subsubsection{Randomized Generalized Non-symmetric Eigenvalue Problem (\textbf{RGNEP})}

We consider the most general divide-and-conquer algorithm; the analysis of the other three algorithms differ only by constants.  One step of \textbf{RGNEP} (Algorithm~\ref{rgnep}) consists of 2 evaluations of \textbf{IRS}, 2 evaluations of \textbf{RURV}, 2 evaluations of \textbf{QR} decomposition, and 6 matrix multiplications, as well as some lower order work.  Thus, the cost of one step of \textbf{RGNEP} (not including the cost of subproblems) is given by
 \begin{eqnarray*}
C_{RGNEP^*}(n) &=& 2 \cdot C_{IRS}(n) + 2 \cdot C_{RURV}(n) + 2 \cdot C_{QR}(n,n) + 6 \cdot C_{MM}(n) \\
&=& \alpha \cdot O \lt(\frac{n^3}{M^{3/2}}\rt) + \beta \cdot O \lt(\frac{n^3}{\sqrt M}\rt) + \gamma \cdot O \lt(n^3\rt).
\end{eqnarray*}
Here and throughout we denote by $C_{\text{ALG}^*}$ the cost of a single divide-and-conquer step in executing \textbf{ALG}.

Assuming we split the spectrum by some fraction $f$, one step of RGNEP creates two subproblems of size $fn$ and $(1-f)n$. 
If we continue to split by fractions in the range $[1-f_0,f_0]$ at each step, for some threshold $\frac12\leq f_0<1$, the total cost of the RGNEP algorithm is bounded by the recurrence $$C(n) = \left\{ \begin{array}{lr}
C\lt(f_0n\rt) + C\lt((1-f_0)n\rt) + C_{RGNEP^*}(n) & \text{if}\;\; 2n^2 > M  \\
\alpha \cdot O (1) + \beta \cdot O (n^2) + \gamma \cdot O \lt(n^3\rt) & \text{if}\;\; 2n^2 \leq M
\end{array} \right.$$
where the base case arises when input matrices $A$ and $B$ fit into fast memory and the only communication is to read the inputs and write the outputs.  This recurrence has solution
$$ C(n) = \alpha \cdot O \lt(\frac{n^3}{M^{3/2}}\rt) + \beta \cdot O \lt(\frac{n^3}{\sqrt M}\rt) + \gamma \cdot O \lt(n^3\rt). $$

Note that while the cost of the entire problem is asymptotically the same as the cost of the first divide-and-conquer step, the constant factor for $C(n)$ is bounded above by $\frac{1}{3f_0(1-f_0)}$ times the constant factor for $C_{RGNEP*}(n)$.  That is, for constant $A$ defined such that $C_{RGNEP*}(n)\leq An^3$, we can verify that $C(n)\leq \frac{A}{3f_0(1-f_0)} n^3$ satisfies the recurrence:
\begin{eqnarray*}
C(n) &\leq& C(f_0n)+C((1-f_0)n)+C_{RGNEP*}(n) \\
&\leq& \frac{A}{3f_0(1-f_0)} (f_0n)^3 + \frac{A}{3f_0(1-f_0)} ((1-f_0)n)^3 + An^3 \\
&\leq& \frac{(f_0^3+(1-f_0)^3)An^3+(3f_0(1-f_0))An^3}{3f_0(1-f_0)} \\
&\leq& \frac{A}{3f_0(1-f_0)}n^3.
\end{eqnarray*}

\subsection{Upper bounds: parallel case} 
\label{UPar}

In this section we analyze the complexity of the communication and computation of the algorithms described in Section~\ref{Algs} when performed on a parallel machine.  We define $P$ to be the number of processors, $\alpha$ and $\beta$ as the message latency and inverse bandwidth between any pair of processors, and $\gamma$ as the flop rate for each processor.  We assume the matrices are distributed in a 2D blocked layout with square blocksize $b=\frac{n}{\sqrt P}$.  As before, we will ignore lower order terms in this analysis.  After analysis of one step of the divide-and-conquer algorithms, we will describe how the subproblems can be solved independently.  

\subsubsection{Subroutines}

The \textbf{SUMMA} matrix multiplication algorithm \cite{SUMMA} for multiplying square matrices of size $n\times n$, with blocksize $b$, has a total cost of
$$C_{MM}(n,P) = \alpha \cdot O\lt( \frac nb \log P \rt) + \beta \cdot O\lt( \frac{n^2}{\sqrt P} \log P\rt) + \gamma \cdot O\lt( \frac{n^3}{P} \rt).$$

From \cite{DGHL08} (equation (12) on page 62), the total cost of computing the QR decomposition using \textbf{CAQR} of an $n\times n$ matrix, with blocksize $b$, is
$$C_{QR}(n,n,P)=\alpha\cdot O\lt(\frac nb \log P \rt)+\beta\cdot O\lt(\frac{n^2}{\sqrt P}\log P\rt)+\gamma\cdot O\lt(\frac{n^3}{P}+\frac{n^2b}{\sqrt P}\log P \rt).$$
The cost of computing the \textbf{QR} decomposition of a tall skinny matrix of size $2n\times n$ is asymptotically the same.

As in the sequential case, \textbf{RURV} and \textbf{IRS} consist of a constant number of subroutine calls to the \textbf{QR} and matrix multiplication algorithms, and thus they have the following asymptotic complexity (here we assume a blocked layout, where $b=\frac{n}{\sqrt  P}$):
$$C_{RURV}(n,P) = \alpha \cdot O\lt( \sqrt P \log P \rt) + \beta \cdot O\lt( \frac{n^2}{\sqrt P} \log P\rt) + \gamma \cdot O\lt( \frac{n^3}{P} \log P  \rt),$$
and
$$C_{IRS}(n,P) = \alpha \cdot O\lt( \sqrt P \log P \rt) + \beta \cdot O\lt( \frac{n^2}{\sqrt P} \log P\rt) + \gamma \cdot O\lt( \frac{n^3}{P} \log P  \rt).$$

Choosing the dimensions of the subblocks of a matrix in order to minimize the 1-norm of the lower left block (as in computing the $n-1$ possible values of $\|E_{21}\|_1/\|A\|_1+\|F_{21}\|_1/\|B\|_1$ in the last step of Algorithm~\ref{rgnep}) can be done with a parallel prefix sum along processor columns followed by a parallel prefix max reduction along processor rows (the Frobenius and max-row-sum norms can be computed with similar two-phase reductions).  This requires $O\lt(\log P\rt)$ messages, $O\lt(\frac{n}{\sqrt P}\log P\rt)$ words, and $O\lt(\frac{n^2}{P}\log P\rt)$ arithmetic, which are all lower order terms, as well as $O\lt(\frac{n^2}{P}\rt)$ extra space.

\subsubsection{Randomized Generalized Non-symmetric Eigenvalue Problem (RGNEP)}

Again, we consider the most general divide-and-conquer algorithm; the analysis of the other three algorithms differ only by constants.  One step of {\bf RGNEP} (Algorithm~\ref{rgnep}) requires a constant number of the above subroutines, and thus the cost of one step of {\bf RGNEP} (not including the cost of subproblems) is given by
$$C_{RGNEP^*}(n,P) = \alpha \cdot O\lt( \sqrt P \log P \rt) + \beta \cdot O\lt( \frac{n^2}{\sqrt P} \log P\rt) + \gamma \cdot O\lt( \frac{n^3}{P}\log P  \rt).$$

Assuming we split the spectrum by some fraction $f$, one step of {\bf RGNEP} creates two subproblems of sizes $fn$ and $(1-f)n$.  Since the problems are independent, we can assign one subset of the processors to one subproblem and another subset of processors to the second subproblem.  For simplicity of analysis, we will assign $f^2P$ processors to the subproblem of size $fn$ and $(1-f)^2P$ processors to the subproblem of size $(1-f)n$.  This assignment wastes $2f(1-f)P$ processors ($\frac12$ of the processors if we split evenly), but this will only affect our upper bound by a constant factor.  

Because of the blocked layout, we assign processors to subproblems based on the data that resides in their local memories.  Figure~\ref{fig:parallelsplit} shows this assignment of processors to subproblems where the split is shown as a dotted line.  Because the processors colored light gray already own the submatrices associated with the larger subproblem, they can begin computation without any data movement.  The processor owning the block of the matrix where the split occurs is assigned to the larger subproblem and passes the data associated with the smaller subproblem to an idle processor.  This can be done in one message and is the only communication necessary to work on the subproblems independently.

\begin{figure}
\centering
\includegraphics[scale=.75]{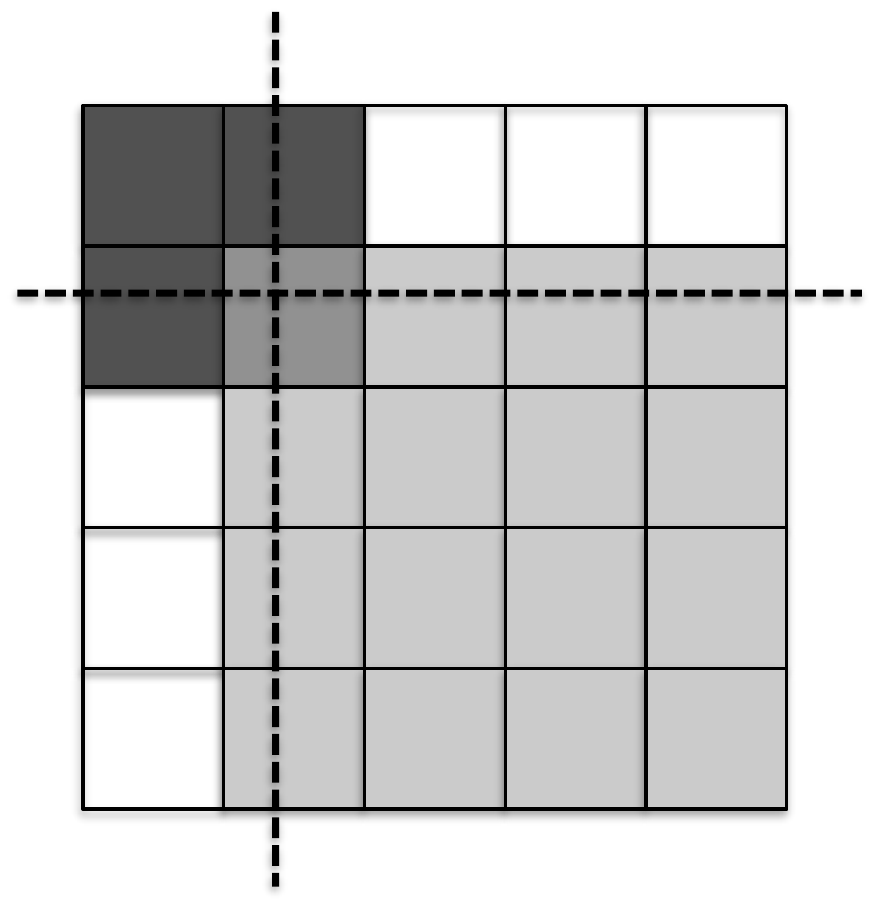}
\caption{Assignment of processors to subproblems.  The solid lines represent the blocked layout on 25 processors, the dotted line represents the subproblem split, and processors are color-coded for the subproblem assignment.  One idle (white) processor is assigned to the smaller subproblem and receives the corresponding data from the processor owning the split.}
\label{fig:parallelsplit}
\end{figure}

If we continue to split by fractions in the range $[1-f_0,f_0]$ at each step, for some threshold $\frac12 \leq f_0<1$, we can find an upper bound of the total cost along the critical path by accumulating the cost of only the larger subproblem at each step (the smaller problem is solved in parallel and incurs less cost).  Although more processors are assigned to the larger subproblem, the approximate identity (ignoring logarithmic factors)
$$C_{RGNEP*}(fn,f^2P) \approx f \cdot C_{RGNEP*}(n,P)$$
implies that the divide-and-conquer step of the larger subproblem will require more time than the corresponding step for the smaller subproblem.  If we assume that the larger subproblem is split by the largest fraction $f_0$ at each step, then the total cost of each smaller subproblem (including subsequent splits) will never exceed that of its corresponding larger subproblem.  Thus, an upper bound on the total cost of  the {\bf RGNEP} algorithm is given by the recurrence
$$C(n,P) = \left\{ \begin{array}{lr}
C\lt(f_0n,{f_0}^2P\rt) + C_{RGNEP^*}(n,P) & \text{if}\;\; P>1  \\
 \gamma \cdot O \lt(n^3\rt) & \text{if}\;\; P=1
\end{array} \right.$$
where the base case arises when one processor is assigned to a subproblem (requiring local computation but no communication).  The solution to this linear recurrence is
$$C(n,P) = \alpha \cdot O\lt( \frac nb \log P \rt) + \beta \cdot O\lt( \frac{n^2}{\sqrt P} \log P\rt) + \gamma \cdot O\lt( \frac{n^3}{P}+\frac{n^2b}{\sqrt P}\log P  \rt).$$
Here the constant factor for the upper bound on the total cost of the algorithm is $\frac{1-P^{-1/2}}{1-f_0}$ times larger than the constant factor for the cost of one step of divide-and-conquer.  This constant arises from the summation $\sum_{i=0}^d {f_0}^i$ where $d=\log_{f_0} \frac{1}{\sqrt P}$ is the depth of the recurrence (we ignore the decrease in the size of the logarithmic factors).

Choosing the blocksize to be $b=\frac{n}{\sqrt P \log P}$, we obtain a total cost for the {\bf RGNEP} algorithm of 
$$C(n,P) = \alpha \cdot O\lt( \sqrt P \log^2 P \rt) + \beta \cdot O\lt( \frac{n^2}{\sqrt P} \log P\rt) + \gamma \cdot O\lt( \frac{n^3}{P} \rt).$$
We will argue in the next section that this communication complexity is within polylogarithmic factors of optimal.

\section{Computing Eigenvectors of a Triangular Matrix} \label{cetm}

After obtaining the Schur form of a nonsymmetric matrix $A=QTQ^*$, finding the eigenvectors of $A$ requires computing the eigenvectors of the triangular matrix $T$ and applying the unitary transformation $Q$.  Assuming all the eigenvalues are distinct, we can solve the equation $TX=XD$ for the upper triangular eigenvector matrix $X$, where $D$ is a diagonal matrix whose entries are the diagonal of $T$.  This implies that for $i<j$,
\begin{equation}
\label{eqn:trevc}
X_{ij} = \frac{\dis T_{ij}X_{jj} + \sum_{k=i+1}^{j-1} T_{ik}X_{kj}}{T_{jj}-T_{ii}}
\end{equation}
where $X_{jj}$ can be arbitrarily chosen for each $j$.  We will follow the LAPACK naming scheme and refer to algorithms that compute the eigenvectors of triangular matrices as {\bf TREVC}.

\subsection{Lower bounds}

Equation~\ref{eqn:trevc} matches the form specified in \cite{BDHS10}, where the $g_{ijk}$ functions are given by the multiplications $T_{ik}\cdot X_{kj}$.  Thus we obtain a lower bound of the communication costs of any algorithm that computes these $O(n^3)$ multiplies.  That is, on a sequential machine with fast memory of size $M$, the bandwidth cost is
$\Omega\lt( \frac{n^3}{\sqrt M} \rt)$
and the latency cost is
$\Omega\lt( \frac{n^3}{M^{3/2}} \rt).$
On a parallel machine with $P$ processors and local memory bounded by $O\lt(\frac{n^2}{P}\rt)$, the bandwidth cost is
$\Omega\lt( \frac{n^2}{\sqrt P} \rt)$
and the latency cost is
$\Omega\lt( \sqrt P \rt).$

\subsection{Upper bounds: sequential case}

The communication lower bounds can be attained in the sequential case by a blocked iterative algorithm, presented in Algorithm~\ref{alg:blck_trevc}. The problem of computing the eigenvectors of a triangular matrix, as well as the formulation of a blocked algorithm, appears as a special case of the computations presented in \cite{Henry95}; however,  \cite{Henry95} does not try to pick a blocksize in order to minimize communication, nor does it analyze the communication complexity of the algorithm.

 For simplicity we assume the blocksize $b$ divides $n$ evenly, and we use the notation $X[i,j]$ to refer to the $b\times b$ block of $X$ in the $i^{th}$ block row and $j^{th}$ block column.  In this section and the next we consider the number of flops, words moved, and messages moved separately and use the notation $A_{ALG}$ to denote the arithmetic count, $B_{ALG}$ to denote to the bandwidth cost or word count, and $L_{ALG}$ to denote the latency cost or message count.  

\begin{algorithm}
\protect\caption{Blocked Iterative {\bf TREVC}}
\label{alg:blck_trevc}
\begin{algorithmic}[1]
\REQUIRE $T$ is upper triangular, $D$ is diagonal with diagonal entries of $T$; \\
	all matrices are blocked with blocksize $b\leq \sqrt{M/3}$ which divides $n$ evenly
\FOR{$j=1$ to $n/b$} 
	\STATE solve $T[j,j]*X[j,j] = X[j,j]*D[j,j]$ for $X[j,j]$
	\COMMENT{read $T[j,j]$, write $X[j,j]$}
	\FOR{$i=j-1$ down to $1$}
		\STATE $S = 0$
		\FOR{$k=i+1$ to $j$}
			\STATE $S = S + T[i,k]*X[k,j]$
			\COMMENT{read $T[i,k]$, read $X[k,j]$}
		\ENDFOR
		\COMMENT{read $T[i,i]$, read $D[j,j]$, write $X[i,j]$}
	\ENDFOR
\ENDFOR
\ENSURE $X$ is upper triangular and $TX=XD$
\end{algorithmic}
\end{algorithm}

Since each block has $O(b^2)$ words, an upper bound on the total bandwidth cost of Algorithm~\ref{alg:blck_trevc} is given by the following summation and inequality:
\begin{eqnarray*}
B_{TREVC}(n) &=& \sum_{j=1}^{n/b} \lt[ O(b^2) + \sum_{i=1}^{j-1} \lt[ \sum_{k=i+1}^j \lt[ O(b^2) \rt] + O(b^2) \rt] \rt] \\
 &\leq& \frac nb \lt[ O(b^2) + \frac nb \lt[ \frac nb \lt[ O(b^2) \rt] + O(b^2) \rt] \rt] \\
 &=& O\lt( \frac{n^3}{b} + n^2 + nb \rt).
 \end{eqnarray*}

If each block is stored contiguously, then each time a block is read or written, the algorithm incurs a cost of one message.  Thus an upper bound on the total latency cost of Algorithm~\ref{alg:blck_trevc} is given by
\begin{eqnarray*}
L_{TREVC}(n) &=& \sum_{j=1}^{n/b} \lt[ O(1) + \sum_{i=1}^{j-1} \lt[ \sum_{k=i+1}^j \lt[ O(1) \rt] + O(1) \rt] \rt] \\
 &=& O\lt( \frac{n^3}{b^3} + \frac{n^2}{b^2} + \frac nb \rt).
 \end{eqnarray*}
 Setting $b=\Theta(\sqrt M)$, such that $b\leq \sqrt{M/3}$, we attain the lower bounds above (assuming $n^2 > M$).

 \subsection{Upper Bounds: Parallel Case}

 There exists a parallel algorithm, presented in Algorithm~\ref{alg:ptrevc}, requiring $O(n^2 / P)$ local memory that attains the communication lower bounds given above to within polylogarithmic factors.  We assume a blocked distribution of the triangular matrix $T$ ($b=n/\sqrt P$) and the triangular matrix $X$ will be computed and distributed in the same layout.  We also assume that $P$ is a perfect square.  Each processor will store a block of $T$ and $X$, three temporary blocks $T'$, $X'$, and $S$, and one temporary vector $D$.  That is, processor $(i,j)$ stores $T_{ij}$, $X_{ij}$, $T_{ij}'$, $X_{ij}'$, $S_{ij}$, and $D_{ij}$.

 The algorithm iterates on block diagonals, starting with the main block diagonal and finishing with the top right block, and is ``right-looking'' (i.e. after the blocks along a diagonal are computed, information is pushed up and to the right to update the trailing matrix).  Blocks of $T$ are passed right one processor column at a time, but blocks of $X$ must be broadcast to all processor rows at each step.

 \begin{algorithm}
\protect\caption{Parallel Algorithm \textbf{PTREVC}}
\label{alg:ptrevc}
\begin{algorithmic}[1]
\REQUIRE $T$ is upper triangular and distributed in blocked layout, $D$ is diagonal with diagonal entries of $T$ (no extra storage needed)
\FORALL{processors $(i,j)$ such that $i\geq j$}
	\STATE set $T_{ij}'=T_{ij}$
	\STATE set $S_{ij}=0$
\ENDFOR
\FORALL{processors $(j,j)$}
	\STATE solve $T_{jj}X_{jj}=X_{jj}D_{jj}$ for $X_{jj}$ locally
	\label{line:diagblocksolve}
	\STATE broadcast $X_{jj},D_{jj}$ up processor column, store in local $X',D'$
	\label{line:diagbroadcast}
\ENDFOR
\FOR{$k=1$ to $\sqrt P - 1$}
\label{line:iterativefor}
	\FORALL{processors $(i,j)$ such that $i-j \geq k-1$}
		\IF{$k \leq j < \sqrt P$}
			\STATE send $T_{ij}' \ra T_{i,j+1}'$ (to right)
			\label{line:sendright}
		\ENDIF
		\IF{$k < j \leq \sqrt P$}
			\STATE receive $T_{i,j-1}' \ra T_{ij}'$ (from left)
			\label{line:recvleft}
		\ENDIF
		\IF{$i-j > k-1$}
			\STATE update $S_{ij} = S_{ij} + T_{ij}'X_{ij}'$ locally
			\label{line:localupdate}
		\ENDIF
	\ENDFOR
	\FORALL{processors $(i,j)$ such that $i-j=k$}
		\STATE solve $T_{ij}'X_{ij}+S_{ij}=X_{ij}D_{ij}'$ for $X_{ij}$ locally
		\label{line:offdiagblocksolve}
		\STATE broadcast $X_{ij}$ up processor column, store in local $X'$
		\label{line:offdiagbroadcast}
	\ENDFOR
\ENDFOR
\ENSURE $X$ is upper triangular and distributed in blocked layout, $TX=XD$
\end{algorithmic}
\end{algorithm}

First we consider the arithmetic complexity.  Arithmetic work occurs at lines~\ref{line:diagblocksolve}, \ref{line:localupdate}, and \ref{line:offdiagblocksolve}.  All of the for loops in the algorithm, except for the one starting at line~\ref{line:iterativefor}, can be executed in parallel, so the arithmetic cost of Algorithm~\ref{alg:ptrevc} is given by
\begin{eqnarray*}
A_{TREVC}(n,P) &=& O\lt(\lt( \frac{n}{\sqrt P} \rt)^3\rt) + \sqrt P \lt( O\lt(\lt( \frac{n}{\sqrt P} \rt)^3\rt) + O\lt(\lt( \frac{n}{\sqrt P} \rt)^3\rt) \rt) \\
 &=& O\lt(\frac{n^3}{P}\rt).
 \end{eqnarray*}

In order to compute the communication costs of the algorithm, we must make some assumptions on the topology of the network of the parallel machine.  We will assume that the processors are arranged in a 2D square grid and that nearest neighbor connections exist (at least within processor rows) and each processor column has a binary tree of connections.  In this way, adjacent processors in the same row can communicate at the cost of one message, and a processor can broadcast a message to all processors in its column at the cost of $O(\log P)$ messages.

Communication occurs at lines~\ref{line:diagbroadcast}, \ref{line:sendright}, \ref{line:recvleft}, and \ref{line:offdiagbroadcast}.  Since every message is of size $O(n^2 / P)$, the bandwidth cost of Algorithm~\ref{alg:ptrevc} is given by
\begin{eqnarray*}
B_{TREVC}(n,P) &=& O\lt(\frac{n^2}{P} \log P\rt) + \sqrt P \lt(O\lt(\frac{n^2}{P}\rt) + O\lt(\frac{n^2}{P} \log P\rt)\rt) \\
 &=& O\lt(\frac{n^2}{\sqrt P} \log P\rt)
 \end{eqnarray*}
 and the latency cost is
 \begin{eqnarray*}
L_{TREVC}(n,P) &=& O\lt(\log P\rt) + \sqrt P \lt(O\lt(1\rt) + O\lt(\log P\rt)\rt) \\
 &=& O\lt(\sqrt P \log P\rt).
 \end{eqnarray*}

\section{Successive Band Reduction}
\label{sec_SBR}

Here we discuss a different class of communication-minimizing algorithms, variants on the conventional reduction to tridiagonal form (for the symmetric eigenvalue problem) or bidiagonal form (for the SVD). These algorithms for the symmetric case were discussed at length in \cite{SBR1,SBR2}, which in turn refer back to algorithms originating in \cite{Rutishauser63,Schwarz68}. Our SVD algorithms are natural variations of these. These algorithms can minimize the number of words moved in an asymptotic sense on a sequential machine with two levels of memory hierarchy (say main memory and a cache of size $M$) when computing just the eigenvalues (respectively, singular values) or additionally all the eigenvectors (resp. all the left and/or right singular vectors) of a dense symmetric matrix (respectively, dense general matrix). Minimizing the latency cost, dealing with multiple levels of memory hierarchy, and minimizing both bandwidth and latency costs in the parallel case all remain open problems.

The algorithm for the symmetric case is as follows.
\begin{enumerate}
\item We reduce the original dense symmetric matrix $A$ to band symmetric form, $H = Q^TAQ$ where $Q$ is orthogonal and $H$ has bandwidth $b$. (Here we define $b$ to count the number of possibly nonzero diagonals above the main diagonal, so for example a tridiagonal matrix has $b=1$.) If only eigenvalues are desired, this step does the most arithmetic, $\frac{4}{3}n^3 + O(n^2)$ as well as $O(n^3 / M^{1/2})$ slow memory references (choosing $b\approx\sqrt M$).
\item We reduce $H$ to symmetric tridiagonal form $T = U^THU$ by successively zeroing out blocks of entries of $H$ and ``bulge-chasing.''  Pseudocode for this successive band reduction is given in Algorithm~\ref{alg:sbr}, and a scheme for choosing shapes and sizes of blocks to annihilate is discussed in detail below. This step does $O(n^2 M^{1/2})$ floating point operations and, with the proper choice of block sizes, $O(n^3 / M^{1/2})$ memory references.  So while this step alone does not minimize communication, it is no worse than the previous step, which is all we need for the overall algorithm to attain $O(n^3 / M^{1/2})$ memory references.  If eigenvectors are desired, then there is a tradeoff between the number of flops and the number of memory references required (this tradeoff is made explicit in Table~\ref{tab:SBRcounts}).
\item We find the eigenvalues of $T$ and--if desired--its eigenvectors, using the \textbf{MRRR} algorithm \cite{DPV06}, which computes them all in $O(n^2)$ flops, memory references, and space. Thus this step is much cheaper than the rest of the algorithm, and if only eigenvalues are desired, we are done.
\item If eigenvectors are desired, we must multiply the transformations from steps 1 and 2 times the eigenvectors from step 3, for an additional cost of $2n^3$ flops and $O(n^3 / \sqrt{M})$ slow memory references. 
\end{enumerate}
Altogether, if only eigenvalues are desired, this algorithm does about as much arithmetic as the conventional algorithm, and attains the lower bound on the number of words moved for most matrix sizes, requiring $O(n^3/M^{1/2})$ memory references; in contrast, the conventional algorithm \cite{lapackmanual} moves $\Omega(n^3)$ words in the reduction to tridiagonal form. However, if eigenvectors are also desired, the algorithm must do more floating point operations.  For all matrix sizes, we can choose a band reduction scheme that will attain the asymptotic lower bound for the number of words moved and increase the number of flops by only a constant factor ($2\times$ or $2.6\times$, depending on the matrix size).

\begin{algorithm}
\protect\caption{Successive band reduction of symmetric banded matrix}
\label{alg:sbr}
\begin{algorithmic}[1]
\REQUIRE $A\in\mathbb{R}^{n\times n}$ is symmetric with bandwidth $b=b_1$
	\FOR{$i=1$ to $s$}
		\STATE \COMMENT{eliminate $d_i$ diagonals from band of remaining width $b_i$}
		\FOR{$j=1$ to $(n-b_i)/c_i$}
		\STATE \COMMENT{eliminate $c_i$ columns from bottom $d_i$ diagonals}
			\STATE zero out $d_ic_i$ entries of parallelogram by orthogonal transformation (e.g. parallelograms 1 and 6 in Figure~\ref{fig:SBR_sym})
			\STATE perform two-sided symmetric update, creating bulge (e.g. $Q_1$ creates bulge parallelogram 2 in Figure~\ref{fig:SBR_sym})
			\FOR{$k=1$ to $(n-jc_i)/b_i$}
				\STATE ``chase the bulge'' (zero out just the $d_ic_i$ entries of the parallelogram) (e.g. parallelograms 2 and 3 in Figure~\ref{fig:SBR_sym})
				\STATE perform two-sided symmetric update, creating bulge \COMMENT{(e.g. $Q_3$ creates bulge parallelogram 4 in Figure~\ref{fig:SBR_sym})}
			\ENDFOR
		\ENDFOR
		\STATE $b_{i+1}=b_i-d_i$
	\ENDFOR 
\ENSURE $A$ is symmetric and tridiagonal
\end{algorithmic}
\end{algorithm}

Table~\ref{tab:SBRcounts} compares the asymptotic costs for both arithmetic and communication of the conventional direct tridiagonalization with the two-step reduction approach outlined above.  The analysis for Table~\ref{tab:SBRcounts} is given in Appendix~\ref{app:SBRcounts}.  There are several parameters associated with the two-step approach.  We let $b$ denote the bandwidth achieved after the full-to-banded step.  If we let $b=1$ we attain the direct tridiagonalization algorithm.  If we let $b=\Theta(\sqrt M)$, the full-to-banded step attains the communication lower bound.  If eigenvectors are desired, then we form the orthogonal matrix $Q$ explicitly (at a cost of $\frac43 n^3$ flops) so that further transformations can be applied directly to $Q$ to construct $QU$, where $A=(QU)T(QU)^T$.

\begin{table} \centering
  \begin{tabular}{| c | c | c |} \hline
  	& \# flops & \# words moved \\ \hline \hline
	Direct Tridiagonalization & & \\
	values & $\frac43 n^3$ & $O\lt(n^3\rt)$ \\
	vectors & $2n^3$ & $O\lt(\frac{n^3}{\sqrt M}\rt)$ \\ \hline \hline
	Full-to-banded & & \\
	values & $\frac43 n^3$ & $O\lt(\frac{n^3}{b}\rt)$\\
	vectors & $\frac43 n^3$ & $O\lt(\frac{n^3}{\sqrt M}\rt)$ \\ \hline 
	Banded-to-tridiagonal & & \\
	values & $6bn^2$ & $\displaystyle O\lt(nb_t + \sum_{i=1}^{t-1} \lt(1+\frac{d_i}{c_i}\rt)n^2 \rt)$ \\
	vectors & $\displaystyle 2\sum_{i=1}^s \frac{d_i}{b_i} n^3$ & $\displaystyle O\lt(s\;\frac{n^3}{\sqrt M}\rt)$ \\ \hline
  \end{tabular}
  \caption{Asymptotic computation and communication costs of reduction to tridiagonal form.  Only the leading term is given  in the flop count.  The cost of the conventional algorithm is given in the first block row.  The cost of the two-step approach is the sum of the bottom two block rows.  If eigenvectors are desired, the cost of each step is the sum of the two quantities given.  The parameter $t$ is defined such that $n(b_t+1)\leq M/4$, or, if no such $t$ exists, let $t=s+1$.}
  \label{tab:SBRcounts}
\end{table}

The set of parameters $s$, $b_i$, $c_i$, and $d_i$ (for $1\leq i\leq s$) characterizes the scheme for reducing the banded matrix $H$ to tridiagonal form via successive band reduction as described in \cite{SBR1,SBR2}.  Over a sequence of $s$ steps, we successively reduce the bandwidth of the matrix by annihilating sets of diagonals ($d_i$ of them at the $i^\text{th}$ step).  We let $b_1=b$ and 
$$b_i = b - \sum_{j=1}^{i-1} d_j.$$
We assume that we reach tridiagonal form after $s$ steps, or that 
$$\sum_{i=1}^s d_i = b-1.$$
Because we are applying two-sided similarity transformations, we can eliminate up to $b_i-d_i$ columns of the last $d_i$ subdiagonals at a time (otherwise the transformation from the right would fill in zeros created by the transformation from the left). We let $c_i$ denote the number of columns we eliminate at a time and introduce the constraint 
$$c_i+d_i \leq b_i.$$ 
This process of annihilating a $d\times c$ parallelogram will introduce a trapezoidal bulge in the band of the matrix which we will chase off the end of the band.  We note that we do not chase the entire trapezoidal bulge; we eliminate only the first $c_i$ columns (another $d\times c$ parallelogram) as that is sufficient for the next sweep not to fill in any more diagonals.  These parameters and the bulge-chasing process are illustrated in Figure~\ref{fig:SBR_sym}.

\begin{figure}
\centering
\includegraphics[scale=.4]{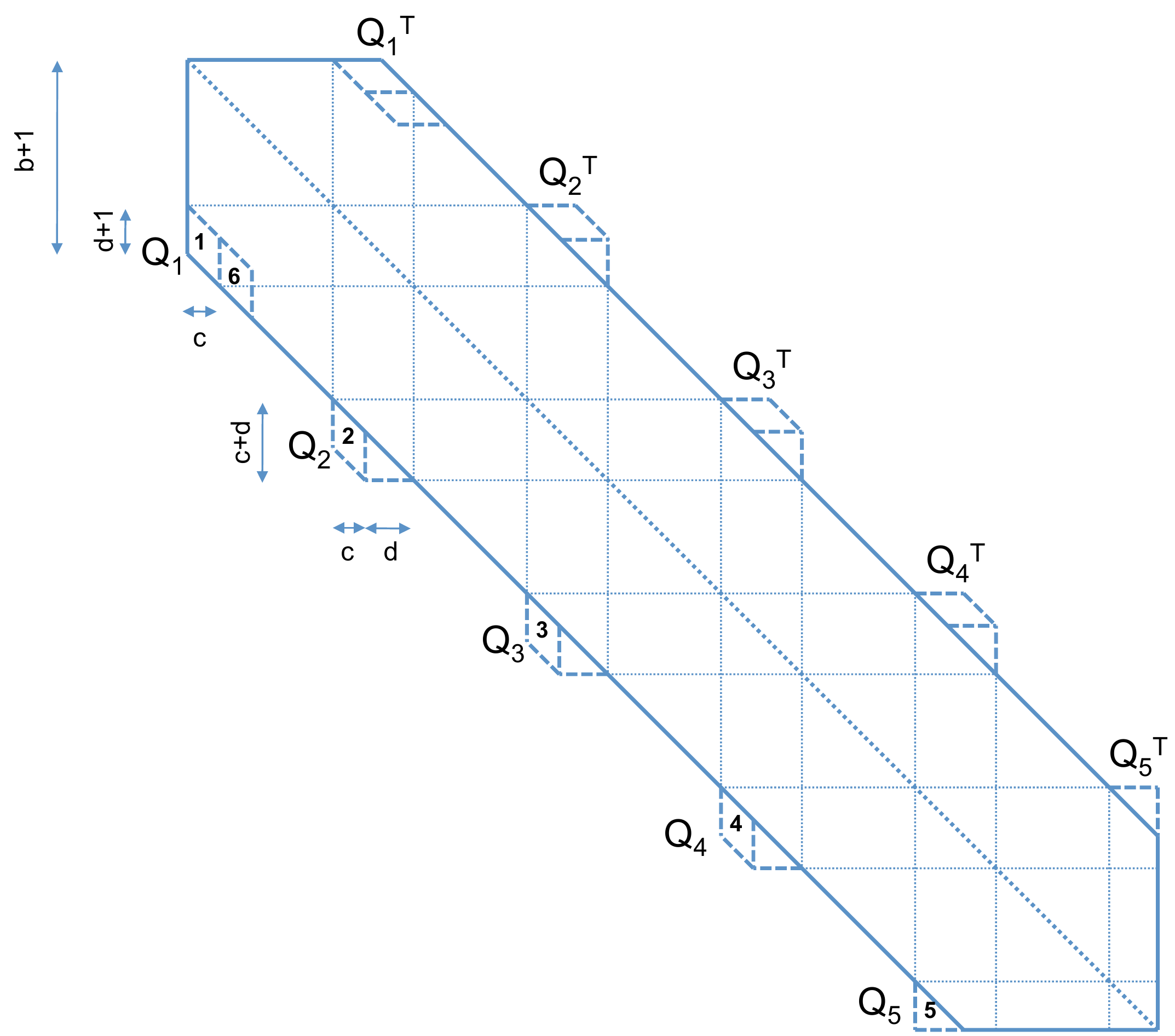}
\caption{First pass of reducing a symmetric matrix from band to tridiagonal form.  The numbers represent the order in which the parallelograms are eliminated, and $Q_i$ represents the orthogonal transformation which is applied to the rows and columns to eliminate the $i^\text{th}$ parallelogram.}
\label{fig:SBR_sym}
\end{figure}

We now discuss two different schemes (i.e. choices of $\{c_i\}$ and $\{d_i\}$) for reducing the banded matrix to tridiagonal form.\footnote{As shown in Figure~\ref{fig:SBR_sym}, for all choices of $\{c_i\}$ and $\{d_i\}$, we chase all the bulges created by the first parallelogram (numbered 1) before annihilating the adjacent one (numbered 6).  It is possible to reorder the operations that do not overlap (say eliminate parallelogram 6 before eliminating the bulge parallelogram 4), and such a reordering may reduce memory traffic.  However, for our purposes, it will be sufficient to chase one bulge at a time.}  Consider choosing $d_1=b-1$ and $c_1=1$, so that $s=1$, as in the parallel algorithm of \cite{lang}.  Then the communication cost of the banded-to-tridiagonal reduction is $O(bn^2)$.  If we choose $b=\alpha_1 \sqrt M$ for some constant $\alpha_1<1/2$, then four $b\times b$ blocks can fit into fast memory at a time, and by results in \cite{DGHL08}, the communication cost of reduction from full to banded form is $O\lt(\frac{n^3}{\sqrt M}\rt)$.  The communication cost of the reduction from banded to tridiagonal (choosing $d_1=b-1$) is $O(n^2\sqrt M)$, and so the total communication cost for reduction from full to tridiagonal form is $O\lt(\frac{n^3}{\sqrt M}+n^2\sqrt M\rt)$.  This algorithm attains the lower bound if the first term dominates the second, requiring $n=\Omega(M)$.  Thus, for sufficiently large matrices, this scheme attains the communication lower bound and does not increase the leading term of the flop count from the conventional method if only eigenvalues are desired.

If eigenvectors are desired, then the number of flops required to compute the orthogonal matrix $QU$ in the banded-to-tridiagonal step is $2n^3$ and the number of words moved in these updates is $O(n^3 / \sqrt M)$.  If $T=V\Lambda V^T$ is the eigendecomposition of the tridiagonal matrix, then $A=(QUV)\Lambda(QUV)^T$ is the eigendecomposition of $A$.  Thus, if the eigenvectors of $A$ are desired, we must also multiply the explicit orthogonal matrix $QU$ times the eigenvector matrix $V$, which costs an extra $2n^3$ flops and $O(n^3 / \sqrt M)$ words moved.  Thus the communication lower bound is still attained, and the total flop count is increased to $\frac{20}{3}n^3$ as compared to $\frac{10}{3}n^3$ in the direct tridiagonalization.

Note that if $n<M$, the $n^2\sqrt M$ term dominates the communication cost and prevents the scheme above from attaining the lower bound.  Consider values of $n$ that are larger than $\sqrt M$ (so the matrix does not fit into fast memory) but smaller than $M$ (so one or more diagonals do fit into fast memory).  In this case, we reduce the banded matrix to tridiagonal in two steps ($s=2$).  First, choose $d_1 = b - \alpha_2\frac Mn$ and $c_1 = \alpha_2\frac Mn$, where $\alpha_2<1/4$ is a constant chosen so that $\alpha_2 \frac Mn$ is an integer.  This implies that $b_2 = \alpha_2 \frac Mn$, so we choose $d_2 = \alpha_2\frac Mn -1$ and $c_2=1$, reducing to tridiagonal after two steps.

Suppose only eigenvalues are desired.  Note that $nb_2 = \alpha_2 M$, so after the first pass, the band fits in fast memory.  Note that we choose $\alpha_2<1/4$ so that the band itself fits in a quarter of fast memory, and another quarter of fast memory is available for bulges and other temporary storage.  If eigenvectors are desired, then the other half of memory can be used in the computation of $QU$.  Thus, the number of words moved in the reduction is $O(nb_2 + (1+d_1/c_1)n^2)$.  If we choose $b=\alpha_1 \sqrt M$ for some constant $\alpha_1 < 1/2$, then $\frac{d_1}{c_1}=\Theta\lt(\frac{n}{\sqrt M}\rt)$, and the total communication cost for reduction from full to tridiagonal form is
$$O\lt(\frac{n^3}{\sqrt M} + n^2 +M\rt)$$
and since $n^2>M$, the first term dominates and the scheme attains the lower bound.  Again, the change in the number of flops in the reduction compared to the conventional algorithm is a lower order term.

If eigenvectors are desired, then the extra flops required to update the orthogonal matrix $Q$ to compute $QU$ is $2(\frac{d_1}{b_1}+\frac{d_2}{b_2})n^3$.  Since $d_i < b_i$, this quantity is bounded by $4n^3$.  Since we also have to multiply the updated orthogonal matrix $QU$ times the eigenvector matrix $V$ of the tridiagonal $T$ (for a cost of another $2n^3$ flops), this increases the total flop count to no more than $\frac{26}{3}n^3$.  The extra communication cost of these updates is $O\lt(\frac{n^3}{\sqrt M}\rt)$ words, thus maintaining the asymptotic lower bound.\footnote{Updating the matrix $Q$ must be done in a careful way to attain this communication cost.  See Appendix~\ref{app:SBRcounts} for the algorithm.}

We may unify the two schemes for the two relative sizes of $n$ and $M$ as follows.  Choose
\begin{eqnarray*}
b_1 &=& \alpha_1 \sqrt M \\
b_2 &=& \max \lt\{ \alpha_2 \frac Mn, 1 \rt\} \\
d_1 &=& b_1 - b_2 \\
c_1 &=& b_2 
\end{eqnarray*}
where $\alpha_1 < 1/2, \alpha_2 < 1/4$ are constants, and, if necessary ($b_2>1$),
\begin{eqnarray*}
d_2 &=& b_2-1 \\
c_2 &=& 1.
\end{eqnarray*} 
In this way we attain the communication lower bound whether or not eigenvectors are desired.  If only eigenvalues are desired, then the two-step approach does $\frac43 n^3 + O(n^2)$ arithmetic which matches the cost of direct tridiagonalization.  If eigenvectors are also desired, the number of flops required by the SBR approach is bounded by either $\frac{20}{3}n^3$ or $\frac{26}{3}n^3$ depending on the number of passes used to reduce the band to tridiagonal, as compared to the $\frac{10}{3}n^3$ flops required by the conventional algorithm.

\begin{table} \centering
  \begin{tabular}{| c | c | c |} \hline
      $\sqrt M < n < M$ & \# flops & \# words moved \\ \hline \hline
    Full-to-banded & & \\
    $b=\alpha_1 \sqrt M$ & & \\
    values & $\frac43 n^3$ & $O\lt(\frac{n^3}{\sqrt M}\rt)$\\
    vectors & $\frac43 n^3$ & $O\lt(\frac{n^3}{\sqrt M}\rt)$ \\ \hline
    Banded-to-tridiagonal & & \\
    $d_1 = b-\alpha_2 (M/n), \; c_1 = \alpha_2 (M/n)$ & & \\
    $d_2 = b_2-1, \; c_2 = 1$ & & \\
    values & $6\alpha_1n^2\sqrt M$ & $O\lt(\frac{n^3}{\sqrt M}\rt)$ \\
    vectors & $4 n^3$ & $O\lt(\frac{n^3}{\sqrt M}\rt)$ \\ \hline
    Recovering eigenvector matrix & & \\
    vectors & $2n^3$ & $O\lt(\frac{n^3}{\sqrt M}\rt)$ \\ \hline \hline
    Total costs & & \\
    values only & $\frac43 n^3$ & $O\lt(\frac{n^3}{\sqrt M}\rt)$ \\
    values and vectors & $\frac{26}{3}n^3$ & $O\lt(\frac{n^3}{\sqrt M}\rt)$ \\ \hline
  \end{tabular}
  \caption{Asymptotic computation and communication costs of reduction to tridiagonal form in the case of $\sqrt M < n < M$.  The parameters $\alpha_1 < 1/2, \alpha_2 < 1/4$ are constants.  Only the leading term is given in the flop count.  The costs correspond to the specific choices of $b$, $d_1$, $c_1$, $d_2$, and $c_2$ listed in the first column.}
  \label{tab:SBRcase1}
\end{table}

\begin{table} \centering
  \begin{tabular}{| c | c | c |} \hline
      $n>M$ & \# flops & \# words moved \\ \hline \hline
    Full-to-banded & & \\
    $b=\alpha_1 \sqrt M$ & & \\
    values & $\frac43 n^3$ & $O\lt(\frac{n^3}{\sqrt M}\rt)$\\
    vectors & $\frac43 n^3$ & $O\lt(\frac{n^3}{\sqrt M}\rt)$ \\ \hline
    Banded-to-tridiagonal & & \\
    $d_1 = b-1, \; c_1 = 1$ & & \\
    values & $6\alpha_1n^2\sqrt M$ & $O\lt(\frac{n^3}{\sqrt M}\rt)$ \\
    vectors & $2 n^3$ & $O\lt(\frac{n^3}{\sqrt M}\rt)$ \\ \hline
    Recovering eigenvector matrix & & \\
    vectors & $2n^3$ & $O\lt(\frac{n^3}{\sqrt M}\rt)$ \\ \hline \hline
    Total costs & & \\
    values only & $\frac43 n^3$ & $O\lt(\frac{n^3}{\sqrt M}\rt)$ \\
    values and vectors & $\frac{20}{3}n^3$ & $O\lt(\frac{n^3}{\sqrt M}\rt)$ \\ \hline
  \end{tabular}
  \caption{Asymptotic computation and communication costs of reduction to tridiagonal form in the case of $n>M$.  The constant $\alpha_1$ is chosen to be smaller than $1/2$.  Only the leading term is given in the flop count.  The costs correspond to the specific choices of $b$, $d_1$, and $c_1$ listed in the first column.}
  \label{tab:SBRcase2}
\end{table} 

\section{Conclusions} 
\label{Conc}

We have presented numerically stable sequential and parallel
algorithms for computing eigenvalues and eigenvectors or the SVD, 
that also attain known lower bounds on communication, 
i.e. the number of words moved and the number of messages.

The first class of algorithms, which do {\em randomized
spectral divide-and-conquer}, do several times as much
arithmetic as the conventional algorithms, and are shown 
to work with high probability. Depending on the problem,
they either return the generalized Schur form (for regular
pencils), the Schur form (for single matrices), or the SVD.
But in the case of nonsymmetric matrices
or regular pencils whose $\epsilon$-pseudo-spectra include 
one or more large connected components in the complex plane 
containing very many eigenvalues, it is possible that the 
algorithm will only return convex enclosures of these
connected components. In contrast, the conventional algorithm 
(Hessenberg QR iteration) would return eigenvalue approximations
sampled from these components. Which kind of answer is 
more useful is application dependent. For symmetric matrices
and the SVD, this issue does not arise, and with high probability
the answer is always a list of eigenvalues.

A remaining open problem is to strengthen the probabilistic analysis
of Theorem~\ref{thm_rurv}, which we believe underestimates the
likelihood of convergence of our algorithm.

Given the Schur form of a nonsymmetric matrix or matrix pencil,
we can also compute the eigenvectors in a communication-optimal
fashion.

Our last class of algorithms uses a technique called 
{\em successive band reduction (SBR)}, which applies only to the
symmetric eigenvalue problem and SVD. SBR deterministically
reduces a symmetric matrix to tridiagonal form (or a general
matrix to bidiagonal form) after which a symmetric
tridiagonal eigensolver (resp. bidiagonal SVD) running in $O(n^2)$
time is used to complete the problem. With appropriate
choices of parameters, sequential SBR minimizes the number of 
words moved between 2 levels of memory hierarchy; minimizing
the number of words moved in the parallel case, or minimizing
the number of messages in either the sequential or parallel
cases are open problems.
SBR on symmetric matrices only does double the arithmetic operations 
of the conventional, non-communication-avoiding algorithm when
the matrix is large enough  (or 2.6x the arithmetic 
when the matrix is too large to fit in fast memory, 
but small enough for one row or column to fit).

We close with several more open problems. First, we would like to
extend our communication lower bounds so that they are not
algorithm-specific, but algorithm-independent, i.e. apply for
{\em any} algorithm for solving the eigenvalue problem or SVD.
Second, we would like to do as little extra arithmetic as possible
while still minimizing communication. Third, we would like to 
implement the algorithms discussed here and determine under which
circumstances they are faster than conventional algorithms.

\section{Acknowledgements}
This research is supported by Microsoft (Award \#024263) and Intel (Award \#024894) funding and by matching funding by U.C. Discovery (Award \#DIG07-10227). Additional support comes from Par Lab affiliates National Instruments, NEC, Nokia, NVIDIA, Samsung, and Sun Microsystems.  James Demmel also acknowledges the support of DOE Grants DE-FC02-06ER25786, DE-SC0003959, and DE-SC0004938, and NSF Grant OCI-1032639.  Ioana Dumitriu's research is supported by NSF CAREER Award DMS-0847661. She would like to thank MSRI for their hospitality during the Fall 2010 quarter; this work was completed while she was a participant in the program \emph{Random Matrix Theory, Interacting Particle Systems and Integrable Systems}.

\bibliographystyle{plain}
\bibliography{cubeig}

\appendix

\section{Analysis for Table~\ref{tab:SBRcounts}}
\label{app:SBRcounts}

\subsection{Direct Tridiagonalization}

We now provide the analysis for the communication costs presented in Table~\ref{tab:SBRcounts}.  The flop counts for direct tridiagonalization are well-known, see \cite{BCCDDDHHPSWW:96}, for example.  Because direct tridiagonalization requires $n$ symmetric matrix-vector products (each requires moving $O(n^2)$ data to access the matrix), the total number of words moved in reducing the symmetric matrix to tridiagonal is $O(n^3)$.  After this step, the orthogonal matrix whose similarity transformation tridiagonalizes $A$ is not stored explicitly, but as a set of Householder vectors.\footnote{Note that the explicit orthogonal matrix can be generated in $\frac 43 n^3$ flops if desired.}  Applying these Householder vectors to the eigenvector matrix of the tridiagonal matrix to recover the eigenvectors of $A$ can be done in the usual blocked way and thus costs $2n^3$ and $O\lt(\frac{n^3}{\sqrt M}\rt)$ extra memory references if the blocksize is chosen to be $\Theta(\sqrt M)$.  

\subsection{Counting Lemmas}

To simplify the analysis for the rest of the table, we point out two useful facts.  We also note that blocking Householder updates does introduce extra flops, but because the number of extra flops is a lower order term, we ignore them here and focus on BLAS-2 updates.

\begin{lemma}
\label{lem:A1}
The leading term of the number of flops required to apply a Householder transformation of a vector $u$ with $h$ nonzeros to $c$ columns (or rows if the applying from the right) of a matrix $A$ is $4hc$.
\end{lemma}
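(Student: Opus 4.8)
The plan is to reduce the claim to counting the flops in the two BLAS-2 kernels that make up a one-sided Householder update, exploiting the sparsity of $u$ throughout. Recall that a Householder transformation can be written $H = I - \tau u u^H$ for a scalar $\tau$ and the vector $u$, which by hypothesis has only $h$ nonzero entries. Applying $H$ from the left to the block $A_c$ consisting of the $c$ selected columns of $A$ amounts to forming $H A_c = A_c - \tau u\,(u^H A_c)$; since $u$ has $h$ nonzeros, the only rows of $A_c$ that enter this computation or get modified by it are the $h$ rows in the support of $u$, and all other rows are left untouched. So I would count (i) the partial inner products $w^H := u^H A_c$ and (ii) the rank-one update $A_c \leftarrow A_c - u\,(\tau w^H)$, restricted in both cases to the support of $u$.

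First I would count $w^H := u^H A_c$. Restricted to the $h$ rows in the support of $u$, this is a row-vector--times--matrix product: for each of the $c$ columns we perform $h$ multiplications and $h-1$ additions, i.e. $2h-1$ flops, for a total of $(2h-1)c = 2hc - c$. Next I would count the rank-one update: scaling $w^H$ by $\tau$ costs $c$ flops (a lower-order term), and the update $A_c \leftarrow A_c - u\,(\tau w^H)$ touches only the $h$ rows in the support of $u$ and all $c$ columns, costing one multiplication and one subtraction per entry, hence $2hc$ flops. Adding the contributions gives $(2hc - c) + c + 2hc = 4hc$ up to the lower-order terms of size $O(h+c)$, which proves that the leading term is $4hc$. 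The ``apply from the right'' case, $A_r \leftarrow A_r - (A_r u)(\tau u^H)$, is identical after transposing, with rows playing the role of columns.

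There is essentially no hard step here; the only points requiring care are (i) consistently using the nonzero count $h$ rather than the ambient dimension, so that the counts come out in terms of $h$ and $c$ alone, and (ii) remembering that only the leading term is asked for, so the $O(h+c)$ cost of scaling by $\tau$ (and of forming $\tau$ itself, if it is recomputed) is discarded, as are the extra lower-order flops incurred by blocking several such updates together. As elsewhere in the paper, we adopt the convention that a flop is a single real multiplication or addition; in complex arithmetic the constant would be larger, but such constant factors are ignored here.
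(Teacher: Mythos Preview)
Your proof is correct and follows essentially the same approach as the paper: decompose the one-sided Householder update $A_c \leftarrow A_c - \tau u(u^T A_c)$ into the inner-product step ($\approx 2hc$ flops), the scaling by $\tau$ (a lower-order term), and the rank-one update ($2hc$ flops). The only cosmetic difference is that the paper applies $\tau$ to $u$ (costing $h$ flops) rather than to $w$ (costing $c$ flops), arriving at $4hc+h$ instead of your $4hc$ plus $O(h+c)$, but the leading term is the same either way.
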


\begin{proof}
Let $\tilde A$ be $m\times c$ (some subset of the columns of $A$).  Applying the transformation associated with Householder vector $u$ is equivalent to overwriting the matrix $\tilde A$ with $\tilde A-\tau u (u^T\tilde A)$.  Since $u$ has $h$ nonzeros, the cost of computing $u^T\tilde A$ is $2hc$ which yields a dense row vector of length $c$.  Computing $\tau u$ costs $h$ multiplies.  Finally, $hc$ entries of $A$ must be updated with one multiplication and one subtraction each.  Thus, the total cost of the update is $4hc+h$.
\end{proof}

\begin{lemma}
\label{lem:A2}
The cost of applying a Householder transformation of a vector with $h$ nonzeros from the left and right to a symmetric matrix is the same as applying the Householder transformation from the left to a nonsymmetric matrix of the same size.
\end{lemma}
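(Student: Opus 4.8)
The plan is to prove the equality by a leading-order flop count of the two sides, checking that both equal $4h^2$. Since the Householder vector has only $h$ nonzeros, the transformation acts as the identity outside a fixed $h\times h$ principal submatrix, and ``of the same size'' refers to this $h\times h$ block. For the one-sided update of an $h\times h$ nonsymmetric matrix, the count is immediate from Lemma~\ref{lem:A1} with $c=h$: applying $I-\tau uu^T$ from the left to all $h$ columns costs $4h\cdot h = 4h^2$ to leading order.

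For the two-sided symmetric update $A\mapsto HAH$ with $H=I-\tau uu^T$, I would first record the standard rank-2 identity: with $p=\tau Au$ and $\beta = \tau\,u^Tp$, one has $HAH = A - wu^T - uw^T$ where $w = p - \tfrac{1}{2}\beta u$. This exposes the two operations whose cost dominates — a symmetric matrix--vector product $Au$ and a symmetric rank-2 update — together with $O(h)$ work to form $\beta$ and $w$. The product $Au$ requires about $h^2$ multiplications and $h^2$ additions, i.e.\ $\sim 2h^2$ flops, whether or not one exploits symmetry in the storage of $A$. The rank-2 update $A_{ij}\leftarrow A_{ij} - w_iu_j - u_iw_j$, carried out over only the $\sim h^2/2$ entries of the lower triangle, costs two multiplications and two add/subtracts per entry, i.e.\ $\sim 2h^2$ flops. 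Summing the two contributions gives $4h^2$, matching the one-sided nonsymmetric count.

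The structural reason the counts coincide, which I would state explicitly, is that each side splits into a matrix--vector product of cost $\sim 2h^2$ (the same on a symmetric or a nonsymmetric $h\times h$ matrix) plus a low-rank update: a rank-1 update of a full $h\times h$ matrix in the nonsymmetric case ($h^2$ entries, one multiply each $\Rightarrow \sim 2h^2$), versus a symmetric rank-2 update in the symmetric case ($h^2/2$ entries, two multiplies each $\Rightarrow \sim 2h^2$). The only point needing care — and the main, if mild, obstacle — is the bookkeeping of these constants: one must notice that exploiting symmetry halves the number of updated entries in the rank-2 update but simultaneously the update carries twice the work per entry, so the symmetric rank-2 update costs exactly as much as the nonsymmetric rank-1 update, while the matrix--vector product gets no symmetry savings at all. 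One should also note that the lower-order terms ignored here — the $O(h)$ cost of forming $w$, the scaling of the Householder vector, and the overhead of blocking these BLAS-2 updates — are exactly those the preamble to these lemmas has already agreed to discard, so the equality is understood as an equality of leading terms.
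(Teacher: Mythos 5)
Your central identity and your key observation are exactly the paper's: write $HAH = A - wu^T - uw^T$ with $w = \tau Au - \tfrac12\tau^2(u^TAu)u$, note the matrix--vector product costs the same with or without symmetry, and observe that the symmetric rank-2 update touches half as many entries as a nonsymmetric rank-1 update but does twice the work per entry. That is the whole content of the paper's proof, and your ``structural reason'' paragraph states it cleanly.

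However, your opening reduction is a genuine error. It is \emph{not} true that the transformation acts as the identity outside an $h\times h$ principal submatrix: if $u_i = 0$ but $u_j \neq 0$, then $(HAH)_{ij} = A_{ij} - \tau(Au)_i u_j \neq A_{ij}$ in general, so $HAH$ modifies all entries in the $h$ rows \emph{and} $h$ columns indexed by $\mathrm{supp}(u)$ --- about $2hn$ entries of an $n\times n$ matrix, not $h^2$. (The same is true of the one-sided update $HA$, which fills $h$ full rows.) Consequently ``of the same size'' in the lemma means the full $n\times n$ matrix, not the $h\times h$ block, and the correct leading-order counts are $2hn$ for $Au$ (you need $h$ columns of $A$, each of length $n$, and $y=Au$ is a dense $n$-vector) plus $2hn$ for the symmetric rank-2 update over the $\approx hn$ modified lower-triangular entries, giving $4hn$ on each side --- matching Lemma~\ref{lem:A1} with $c=n$. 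This distinction is not cosmetic: the lemma is invoked in the bulge-chasing analysis for the ``Sym'' step, where a $(d+c)\times(d+c)$ symmetric submatrix is conjugated by a reflector whose vector has only $h=d$ nonzeros, and the cost there must be $4d(d+c)$, not $4d^2$. Your proof as written establishes only the degenerate case $n=h$; replacing $h^2$ by $hn$ throughout (and counting the rank-2 update over the $h$ modified rows/columns rather than the $h\times h$ block) repairs it, since your symmetry-versus-rank-two cancellation argument carries over unchanged.
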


\begin{proof}
Let $A$ be $n\times n$ and symmetric, and let $u$ be a Householder vector with $h$ nonzeros.  As pointed out in \cite{lawn02}, the performing the following three computations overwrites the matrix $A$ with $(I-\tau u u^T)A(I-\tau u u^T)^T$:
\begin{eqnarray*}
y & \leftarrow & Au \\
v & \leftarrow & y-\frac12(y^Tu)u \\
A & \leftarrow & A - uv^T - vu^T
\end{eqnarray*}
The first computation costs $2hn$ flops, and the second line costs $4h+1$. Since each of the matrices $uv^T$ and $vu^T$ have $hn$ nonzeros, each entry of $A$ which corresponds to a nonzero in one of the matrices must be updated with a multiplication and a subtraction.  However, since $A$ is symmetric, we need to update only half of its entries, so the cost of the third line is $2hn$.  Thus, the total cost (ignoring lower order terms) is $4hn$ which is the same as the cost of applying the Householder transformation from one side to an $n\times n$ nonsymmetric matrix.
\end{proof}

\subsection{Reduction from Full to Banded}

The second row of Table~\ref{tab:SBRcounts} corresponds to the first step of the SBR approach, reducing a full symmetric matrix to banded form by a sequence of QR factorizations on panels followed by two-sided updates.  Algorithm~\ref{alg:sym2band} describes the reduction of a full symmetric matrix to banded form.  Here \textbf{TSQR} refers to the Tall-Skinny QR decomposition described in \cite{DGHL08}, which returns $Q$ in a factored form whose details do not concern us here. The multiplication by $Q^T$ and $Q$ in the next two lines assumes $Q$ is in this factored form and exploits the symmetry of $A$.  See \cite{LKDB10} for the details of a multicore implementation of this algorithm.  By Lemma~\ref{lem:A2}, the flop costs of the two-sided updates are the same as a one-sided update to a non-symmetric matrix.  At the $k^\text{th}$ step of the reduction, the QR is performed on a panel of size $(n-kb)\times b$, yielding $(n-kb)b-\frac12 b^2$ Householder entries which are then applied to $n-kb+b$ rows and columns.  Thus, by Lemma~\ref{lem:A1}, the arithmetic cost of the reduction is roughly
$$\sum_{k=1}^{n/b-2} 2(n-kb)b^2-\frac13 b^3 + 4 \lt((n-kb)b-\frac12 b^2\rt) (n-kb+b) = \frac43 n^3 + O(bn^2).$$
By results for \textbf{TSQR} and applying $Q$ and $Q^T$ in Appendix B of \cite{DGHL08}, all of these flops may be organized in a blocked fashion to achieve a communication cost of $O\lt(\frac{n^3}{b}\rt)$ words, where $b<\sqrt M / 2$.  Note that $b<\sqrt M / 2$ implies that $4b^2\leq M$, or four $b\times b$ blocks fit into fast memory at the same time.

\begin{algorithm}
\protect\caption{Reduction of $A=A^T$ from dense to band form with bandwidth $b$}
\label{alg:sym2band}
\begin{algorithmic}[1]
\REQUIRE $A\in\mathbb{R}^{n\times n}$ is symmetric, and assume that $b$ divides $n$ for simplicity
	\FOR{$i=1$ to $n-2b+1$ step $b$}
		\STATE $[Q,R] = \text{TSQR}(A(i+b:n,i:i+b-1))$
		\STATE $A(i+b:n,:) = Q^T \cdot A(i+b:n,:)$ 
		\STATE $A(:,i+b:n) = A(:,i+b:n) \cdot Q$ 
	\ENDFOR
\ENSURE $A$ is symmetric and banded with bandwidth $b$
\end{algorithmic}
\end{algorithm}

If eigenvectors are desired, the orthogonal matrix $Q$ that reduces the full matrix to banded form via similarity transformation is formed explicitly.  In this way, the many small Householder transformations of the banded-to-tridiagonal step can be applied to $Q$ as they are computed and then discarded.  The arithmetic cost of forming $Q$ explicitly is that of applying the Householder transformations to the identity matrix in reverse order as they were computed.\footnote{This method mirrors the computation performed in LAPACK's {\tt xORGQR} for generating the corresponding orthogonal matrix from a set of Householder vectors.}  Since the number of Householder entries used to reduce the second-to-last column panel is roughly $\frac12 b^2$ and the number of rows on which these transformations operate is $b$, the transformations need be applied only to the last $b$ columns of the identity matrix.  The next set of transformations, computed to reduce the third-to-last column panel of $A$, operate on the last $2b$ rows.  Thus, they need to be applied to only the last $2b$ columns of the identity matrix.   Since the number of Householder entries used to reduce the $i^\text{th}$ column panel is roughly $(n-ib)b-\frac12 b^2$ and the number of columns to be updated is $(n-ib)$, by Lemma~\ref{lem:A1}, the total number of flops for constructing $Q$ explicitly is given by
$$\sum_{i=1}^{n/b-1}4\lt((n-ib)b-\frac12 b^2\rt)(n-ib) = \frac43n^3 + O(bn^2).$$
Again, as shown in \cite{DGHL08}, the application of these Householder entries can be organized so that the communication cost is $O\lt(\frac{n^3}{b}\rt)$ words where $b\leq\sqrt M/2$.

\subsection{Reduction from Banded to Tridiagonal}

Some analysis for the third row of Table~\ref{tab:SBRcounts} appears in \cite{SBR1} for the flop counts in the case $c_i=1$ for each $i$.  We reproduce their results for general choices of $\{c_i\}$ and also provide analysis for the cost of the data movement.  We will re-use the terminology from \cite{SBR1} and refer the reader to Figure 2(b) in that paper for a detailed illustration of one bulge-chase.

Bulge-chasing can be decomposed into four operations: ``QR'' of a $(d+c)\times c$ matrix, ``Pre''-multiplication of an orthogonal matrix which updates $d+c$ rows, a two-sided ``Sym''mmetric update of $(d+c)\times (d+c)$ submatrix, and ``Post''-multiplication of the transpose of the orthogonal matrix which updates $d+c$ columns.  We can count the arithmetic cost in terms of applying Householder vectors to sets of rows/columns (using Lemmas~\ref{lem:A1} and \ref{lem:A2}).  Consider eliminating column $k$ of the parallelogram.  In the QR step, the associated Householder transformation must be applied to the remaining $c-k$ columns.  In the Pre step, the transformation is applied to $b-c$ columns.  In the Sym step, the two-sided update is performed on a $(d+c)\times (d+c)$ submatrix.  Finally, in the Post step, the transformation is applied from the right to $b-(c-k)$ rows.  Since every Householder vector used to eliminate the parallelogram has $d$ nonzeros, the cost of eliminating each column of the parallelogram is $4d(2b+d)$, and the cost of eliminating the entire parallelogram ($c$ columns) is $8bcd+4cd^2$.

Bulge-chasing pushes the bulge $b$ columns down the band each time.  Thus, the number of flops required to eliminate a set of $d$ diagonals is about
$$\sum_{j=1}^{n/c} \lt(8bcd+4cd^2\rt) \frac{n-jc}{b} = (4d + 2\frac{d^2}{b})n^2.$$
Since $d<b$, this cost is bounded above by $6dn^2$.  Taking $s$ steps to eliminate all the diagonals, we see that the total arithmetic cost of the banded-to-tridiagonal reduction (ignoring lower order terms) is
$$\sum_{k=i}^s 6 d_i n^2 = 6bn^2.$$

We now consider the data movement.  First, suppose the band is small enough to fit into memory into one fourth of memory; that is, $n(b+1)<M/4$ (we pick the fraction $1/4$ so that the band fits in a quarter of the memory, the extra storage required for bulge chasing fits in a quarter of the memory, and the other half of memory can be reserved for the vector updating process discussed below).  Then the communication cost of the reduction is simply reading in the band: $nb$ words.  Suppose the band is too large to fit into memory.  In this case, since we chase each bulge all the way down the band, we will obtain very little locality.  Even though we eliminate a bulge as soon as it's been created, we ignore the fact that some data (including the bulge itself) fits in memory and will be reused (it is a lower order term anyway).  Consider the number of words accessed to chase one bulge: QR accesses $cd+\frac12 c^2$ words, Pre accesses $(b-c)(c+d)$ words, Sym accesses $\frac12(c+d)^2$ words, and Post accesses $b(c+d)-\frac12c^2$ words.  The total number of words accessed for one bulge-chase is then $2b(c+d)+\frac12(c+d)^2 - c^2$.  Using the same summation as before, the communication cost of chasing all the bulges from eliminating $d$ diagonals (ignoring lower order terms) is
$$\sum_{j=1}^{n/c} \lt(2b(c+d)+\frac12(c+d)^2 - c^2\rt) \frac{n-jc}{b} = \lt(1+\frac dc - \frac{c}{2b} + \frac{(c+d)^2}{4bc}\rt)n^2.$$
Since $c+d\leq b$, and by ignoring the negative term, we obtain a communication cost for eliminating $d$ diagonals of $O\lt(\lt(1+\frac dc\rt)n^2\rt)$.

In order to obtain the communication cost for the entire banded-to-tridiagonal step, we sum up the costs of all the reductions of $\{d_i\}$ diagonals until the band fits entirely in memory.  That is, the number of words moved in the banded-to-tridiagonal step is
$$O\lt(nb_t + \sum_{i=1}^{t-1} \lt(1+\frac{d_i}{c_i}\rt)n^2 \rt)$$
where $n(b_t+1) \leq M$.  Note that if no such $t$ exists (in the case $2n>M$), the number of words is given by
$$O\lt(\sum_{i=1}^{s} \lt(1+\frac{d_i}{c_i}\rt)n^2 \rt)$$
where $b_{s+1}=1$.

If eigenvectors are desired, each of the orthogonal transformations used to eliminate a parallelogram or chase a bulge must also be applied to the orthogonal matrix $Q$ which is formed explicitly in the full-to-banded step.  Note that the transformations must be applied to $Q$ from the right (we are forming the matrix $QU$ such that $(QU)^TAQU=T$ is tridiagonal).  However, these updates are one-sided and since many of the orthogonal transformations work on mutually disjoint sets of columns, we can reorder the updates to $Q$.  The restrictions on this reordering are made explicit in \cite{prism:17}, and we use the same reordering proposed in that paper.  Note that to be able to reorder the updates on the eigenvector matrix $Q$, we must temporarily store a set of Householder entries which are computed in the band reduction process.

Suppose we save up all the Householder entries from eliminating $k$ parallelograms and chasing their associated bulges completely off the band.  Since every Householder transformation consists of $cd$ Householder entries, saving up the transformations after eliminating $k$ parallelograms requires $O\lt(kcd\frac nb\rt)$ words of extra memory.  

Following the notation of \cite{prism:17}, we number the Householder transformations by the ordered pair $(i,j)$ where $i$ is the number of the initial parallelogram annihilated and $j$ is the number of the bulge chased by the transformation (so $1\leq i\leq k$ and $1\leq j\leq n/b$).  For example, in Figure~\ref{fig:SBR_sym}, $Q_1$ would we numbered $(1,1)$, $Q_3$ would be numbered $(1,3)$, and the orthogonal transformation annihilating parallelogram $6$ would be numbered $(2,1)$.  We will group these Householder transformations not by their initial parallelogram number but by their bulge number (the second index in the ordered pair).  As argued in \cite{prism:17}, we can apply these updates in groups in reverse order, so that all $(i,4)$ transformations will be applied before any $(i,3)$ transformations.  As long as the order of the updates within the groups is done in increasing order with respect to the initial parallelogram number (e.g. $(3,4)$ before $(4,4)$), we maintain correctness of the algorithm.

Consider the group of transformations with bulge number $j$.  The transformation $(1,j)$ eliminates a parallelogram of size $d\times c$ and therefore updates $d+c$ columns of $Q$.  Since the transformation $(2,j)$ eliminates the adjacent parallelogram, it updates an overlapping set of columns.  The first (leftmost) $d$ columns are affected by $(1,j)$, but the last (rightmost) $c$ columns are disjoint.  Since all of the parallelograms are adjacent, the total number of columns updated by the $k$ transformations with bulge number $j$ is $d+kc$.

We choose $k$ such that $kc=\Theta(\sqrt M)$.  Note that we may always choose $k$ in this way if we assume the initial bandwidth is chosen to be $b=O(\sqrt M)$ since $c<b$ (this is a safe assumption because choosing a larger bandwidth does not reduce memory traffic in the full-to-banded step).  This implies that since $d<b$, the extra memory required is $O(n\sqrt M)$ words, each group of Householder transformations consists of $kcd=O(M)$ words, and that the number of columns affected by each group of Householder transformations is $O(\sqrt M)$.

We now describe the updates to $Q$.  We assume that all of the Householder entries for $k$ sweeps of the reduction process have been stored in slow memory and that $Q$ resides in slow memory.\footnote{In the communication cost for the reduction of the band to tridiagonal form, we assumed that once the band fits in memory it may stay there until the algorithm completes.  If eigenvectors are desired, then this update process must occur with the band itself residing in fast memory.  We choose constants such that half of fast memory is reserved for storing the band and the other half of fast memory may be used for the updating process.}  The updating process is given as Algorithm~\ref{alg:vectorupdate}.   

\begin{algorithm}
\protect\caption{Updating eigenvector matrix $Q$ with Householder transformations from $k$ sweeps}
\label{alg:vectorupdate}
\begin{algorithmic}[1]
\REQUIRE $Q\in\mathbb{R}^{n\times n}$ is orthogonal, transformations $U(i,j)$ have been computed for $1\leq i\leq k$ and $1\leq j\leq J$, and assume that $B$ divides $n$ for simplicity
	\FOR{$l=1$ to $n-B$ step $B$}
		\FOR{$j=J$ down to $1$}
			\STATE read $U(i,j)$ for $1\leq i\leq k$
			\STATE read columns of row panel $Q(i:i+B-1,:)$ (not already in fast memory) effected by $U(1:k,j)$ 
			\FOR{$i=1$ to $k$}
				\STATE apply $U(i,j)$
			\ENDFOR
		\ENDFOR 
	\ENDFOR
\ENSURE $Q$ is orthogonal
\end{algorithmic}
\end{algorithm}

Let $J=O(n/b)$ be the largest bulge number. We update $Q$ one row panel at a time in reverse order by the bulge number $j$ and then in increasing order by sweep number $i$.  Since the size of each group of transformations is $O(M)$, and the number of rows and columns of $Q$ accessed at each iteration are both $O(\sqrt M)$, all of the necessary information resides in fast memory during the application of the $(i,j)$ transformations.  The columns effected by transformations $(:,j)$ and $(:,j-1)$ may overlap, but since the rightmost column effected by block $j$ is to the right of the rightmost column effected by $j-1$ for each $j$, applying the blocks of transformations in this order ensures that the entries of each row panel of $Q$ are read from and written to slow memory no more than once.  However, each Householder entry in a transformation is read once for each row panel updated.  Thus, the number of words accessed to/from slow memory for the vector update after $k$ sweeps, under the choices $kc=\Theta(\sqrt M)$ and $B=\Theta(\sqrt M)$ is 
$$\frac nB\lt(kcd\frac nb+nB\rt) = O(n^2).$$
The number of times this updating process must be done to eliminate $d$ diagonals is $\frac{n}{kc}$, and thus the total communication cost is $O\lt(\frac{n^3}{\sqrt M}\rt)$.  If $s$ is the number of steps taken to reach tridiagonal form, the total extra number of words moved in order to update the vector matrix in the banded-to-tridiagonal step is 
$$O\lt(s\;\frac{n^3}{\sqrt M}\rt).$$

After the matrices $QU$ and $T$ are computed, we use \textbf{MRRR} to find the eigenvalues and (if desired) the eigenvectors of $T$.  In order to compute the eigenvectors of $A$ we must multiply the matrix $QU$ times the eigenvector matrix of $T$, and doing so costs $2n^3$ flops and $O(n^3/\sqrt M)$ memory references.

\section{SVD via SBR}
\label{app:SVD}

The situation for the SVD is analogous to the symmetric eigenvalue problem using symmetric band reduction:
\begin{enumerate}
\item We reduce the matrix $A$ to upper triangular band form $S = U^TAV$ with bandwidth $b \approx M^{1/2}$, and $U$ and $V$ are orthogonal.
\item We reduce $S$ to upper bidiagonal form $B = W^TSX$, where $W$ and $X$ are orthogonal. We again do this by successively zeroing out blocks of
entries of $S$ of carefully chosen sizes.
\item We find the singular values of $B$, and its left and/or right singular vectors if desired. There are a number of algorithms that can compute
just the singular values in $O(n^2)$ flops and memory references, in which case we are done. But computing the singular vectors as well is more
complicated: Extending \textbf{MRRR} from the symmetric tridiagonal eigenproblem to the bidiagonal SVD has been an open problem for many years,
but an extension was recently announced \cite{Willems09}, whose existence we assume here.
\item If singular vectors are desired, we must multiply the transformations from steps 1 and 2 times the singular vectors from step 3, for an
additional cost of $O(n^3)$ flops and $O(n^3 / \sqrt{M})$ slow memory references.
\end{enumerate}

The algorithm and analysis for the SVD are entirely analogous to the symmetric case.  Algorithm~\ref{alg:full2bandbidiag} shows the first part of
the reduction to bidiagonal form.

\begin{algorithm}
\protect\caption{Reduction of $A$ from dense to upper triangular band form with bandwidth $b$}
\label{alg:full2bandbidiag}
\begin{algorithmic}[1]
\REQUIRE $A\in\mathbb{R}^{n\times n}$, and assume that $b$ divides $n$ for simplicity
    \FOR{$i=1$ to $n-2b+1$ step $b$}
        \STATE $[Q_c,R_c] = TSQR(A(i:n,i:i+b-1))$
        \STATE $A(i:n,:) = Q_c^T \cdot A(i:n,:)$
        \STATE $[Q_r,R_r] = TSQR((A(i:i+b-1,i+b:n))^T)$
        \STATE $A(:,i+b:n) = A(:,i+b:n) \cdot Q_r$
    \ENDFOR
\ENSURE $A$ is banded upper triangular with bandwidth $b$
\end{algorithmic}
\end{algorithm}

\begin{figure}
\centering
\includegraphics[scale=.5]{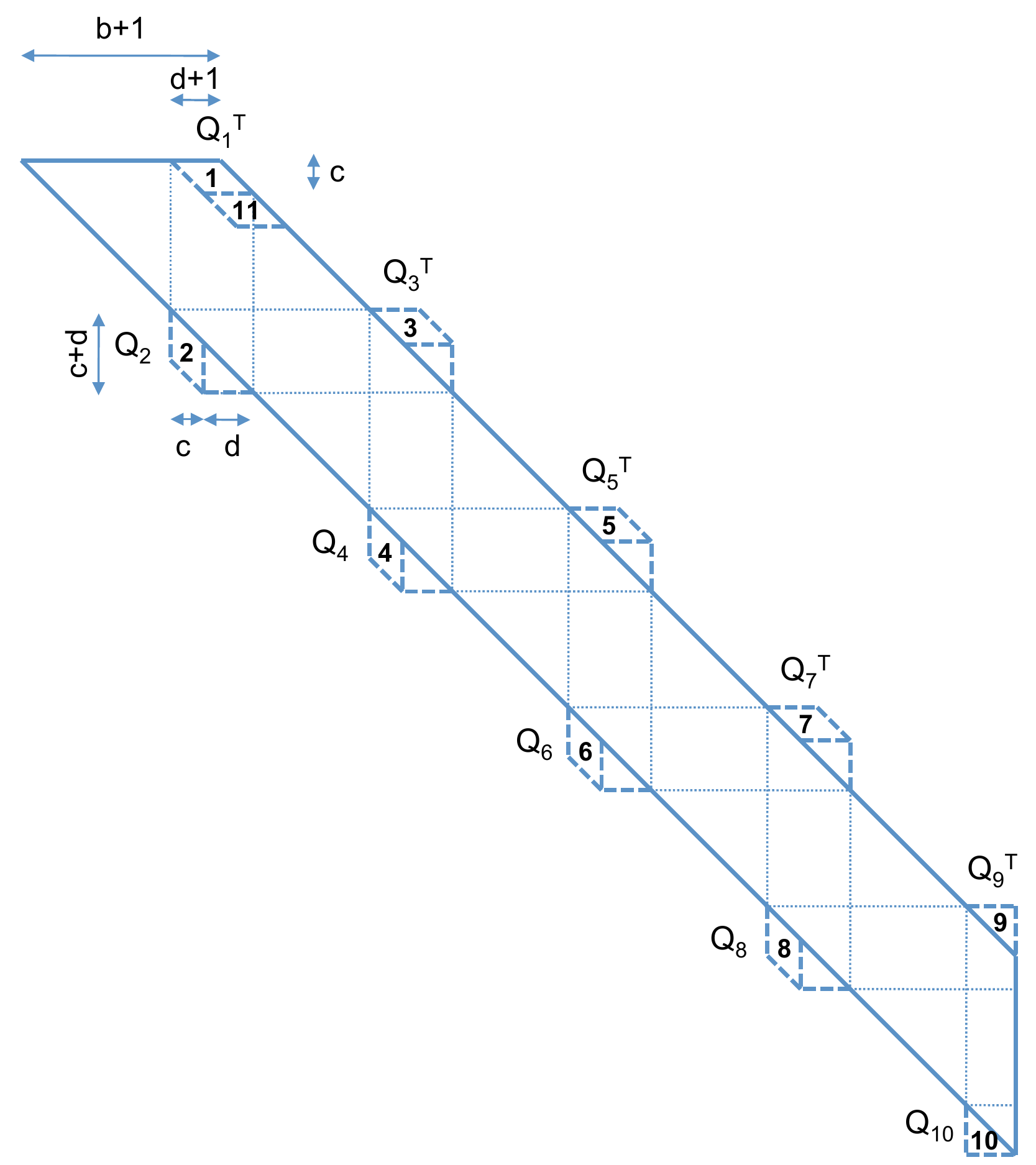}
\caption{First pass of reducing a triangular matrix from banded to bidiagonal form}
\label{fig:SBR_svd}
\end{figure}

The first pass of the second part of the SVD algorithm, reducing the upper triangular band matrix to bidiagonal form, is shown in
Figure~\ref{fig:SBR_svd}.  Table~\ref{tab:SBRSVD} provides the asymptotic computation and communication complexity of computing the SVD for the
case when only singular values are desired and the case when both left and right singular vectors as well as the singular values are desired.  The
analysis for Table~\ref{tab:SBRSVD} is analogous to the symmetric case.  Except in the case of the reduction from banded to bidiagonal form, the
leading term in the flop count at each step doubles as compared to the symmetric eigenproblem.

\begin{table} \centering
  \begin{tabular}{| c | c | c |} \hline
& \# flops & \# words moved \\ \hline \hline
    Direct Bidiagonalization & & \\
    values & $\frac83 n^3$ & $O\lt(n^3\rt)$ \\
    vectors & $4n^3$ & $O\lt(\frac{n^3}{\sqrt M}\rt)$ \\ \hline \hline
    Full-to-banded & & \\
    values & $\frac83 n^3$ & $O\lt(\frac{n^3}{b}\rt)$\\
    vectors & $\frac83 n^3$ & $O\lt(\frac{n^3}{\sqrt M}\rt)$ \\ \hline
    Banded-to-tridiagonal & & \\
    values & $8bn^2$ & $\displaystyle O\lt(nb_t + \sum_{i=1}^{t-1} \lt(1+\frac{d_i}{c_i}\rt)n^2 \rt)$ \\
    vectors & $\displaystyle 4\sum_{i=1}^s \frac{d_i}{b_i} n^3$ & $\displaystyle O\lt(s\;\frac{n^3}{\sqrt M}\rt)$ \\ \hline
    Recovering singular vectors & & \\
    vectors & $4n^3$ & $\displaystyle O\lt(\frac{n^3}{\sqrt M}\rt)$ \\ \hline
  \end{tabular}
  \caption{Asymptotic computation and communication costs of computing the SVD of a square matrix.  Only the leading term is given in the flop
count.  The cost of the conventional algorithm is given in the first block row.  The cost of the two-step approach is the sum of the bottom three
block rows.  If both sets of singular vectors are desired, the cost of each step is the sum of the two quantities given.  The parameter $t$ is
defined such that $n(b_t+1)\leq M/4$, or, if no such $t$ exists, let $t=s+1$.}
  \label{tab:SBRSVD}
\end{table}

In order to attain the communication lower bound, we use the same scheme for choosing $b$, $\{c_i\}$, and $\{d_i\}$ as in the symmetric case. That is, for large matrices ($n>M$), we reduce the band to bidiagonal in one sweep, and for smaller matrices ($\sqrt M < n < M$), we use the first sweep to reduce the size of the band so that it fits in fast memory and the second sweep to reduce to bidiagonal form.  As in the case of the symmetric eigenproblem, if only singular values are desired, the leading term in the flop count is the same as for the conventional, non-communication-avoiding algorithm.  If both left and right singular vectors are desired, for large matrices, we increase the leading term in the flop count by a factor of $2$, and for smaller matrices, we increase the leading term in the flop count by a factor of $2.6$.  The conventional approach costs $\frac83 n^3$ flops if only singular values are desired and $\frac{20}{3}n^3$ flops if all vectors and values are desired. Computing the SVD via SBR costs $\frac83 n^3$ flops if only singular values are required and either $\frac{40}{3}n^3$ or $\frac{52}{3}n^3$ flops if all vectors and values are desired.

\end{document}